\newcommand\defeq{\stackrel{\mathclap{\normalfont\mbox{def}}}{=}}
\newcommand{\mone}{\mbox{-1}}
\newcommand{\mtwo}{\mbox{-2}}
\newcommand{\mthree}{\mbox{-3}}
\DeclarePairedDelimiter{\ceil}{\lceil}{\rceil}
\DeclarePairedDelimiter{\floor}{\lfloor}{\rfloor}
\theoremstyle{plain}
\newtheorem{thm}{Theorem}
\numberwithin{thm}{subsection}
\newtheorem{lem}[thm]{Lemma}
\newtheorem{prop}[thm]{Proposition}
\newtheorem{cor}[thm]{Corollary}
\newtheorem{rmk}[thm]{Remark}
\theoremstyle{definition}
\newtheorem{defn}[thm]{Definition}
\newtheorem{ex}[thm]{Example}
\newtheorem{exs}[thm]{Examples}
\tikzstyle{V}=[draw, fill =black, circle, inner sep=0pt, minimum size=3pt]
\newcounter{r}
\newcommand\Part[1]{
        \setcounter{r}{1}
	 \foreach \x in {#1}{
 	{\ifnum\value{r}=1
		\draw (0,\value{r}-1)--(\x,\value{r}-1); 
		\fi}
	\draw (0,\value{r}) to (\x,\value{r});
   	\foreach \y in {0, ..., \x} {\draw (\y,\value{r})--(\y,\value{r}-1);}
	\addtocounter{r}{1}
 }}
 \def\PartUNIT{.175}
\newcommand{\PART}[1]{
\begin{matrix}
\begin{tikzpicture}[xscale=\PartUNIT, yscale=-\PartUNIT] 
	\Part{#1}
\end{tikzpicture}
\end{matrix}
}
\begin{document}

\title{The Center of the Partition Algebra}

\author{Samuel Creedon}

\maketitle

\begin{abstract}
In this paper we show that the center of the partition algebra $\mathcal{A}_{2k}(\delta)$, in the semisimple case, is given by the subalgebra of supersymmetric polynomials in the normalised Jucys-Murphy elements. For the non-semisimple case, such a subalgebra is shown to be central, and in particular it is large enough to recognise the block structure of $\mathcal{A}_{2k}(\delta)$. This allows one to give an alternative description for when two simple $\mathcal{A}_{2k}(\delta)$-modules belong to the same block.
\end{abstract}

\section{Introduction}

\subsection*{Background}

Let $\mathbb{C}S_{n}$ be the group algebra of the symmetric group on $n$ letters over the field of complex numbers $\mathbb{C}$. The representation theory of $\mathbb{C}S_{n}$ has been extensively studied and is well-understood. A particularly illuminating approach to the study of the representation theory of $\mathbb{C}S_{n}$ was given by Okounkov and Vershik in \cite{OV96}, where they focused on two main features. The first of such was the chain of algebras
\[ \mathbb{C}=\mathbb{C}S_{0} \subset \mathbb{C}S_{1} \subset \mathbb{C}S_{2} \subset \dots. \]
The idea was to study the representation theory of $\mathbb{C}S_{n}$ for all $n$ simultaneously by understanding the restriction rules in the above chain. Notably, this chain was shown to be multiplicity-free, meaning for any simple $\mathbb{C}S_{n}$-module $M$, the multiplicity of any simple $\mathbb{C}S_{n-1}$-module in the restriction $M\downarrow_{\mathbb{C}S_{n-1}}$ is at most one. The information of the restriction rules in the above chain can be encoded in the form of a directed graph, called the branching graph of the chain. From such a graph one can construct a unique basis (up to scalars) for any simple $\mathbb{C}S_{n}$-module indexed by certain paths in the branching graph, referred to as the \emph{Gelfand-Zetlin basis}, or GZ-basis for short. The branching graph thus gives a useful framework to describing the representation theory for all the symmetric groups, and the GZ-basis allows one to fit any simple module within this framework in a compatible way. The second main feature in the work of Okounkov and Vershik is the role played by the Jucys-Murphy elements, or JM-elements for short. These are a sequence of commuting elements defined for each $k\geq 1$ by
\[ L_{k} = \sum_{1\leq j<k}(j,k),\]
where $(j,k)$ denotes the transposition exchanging $j$ and $k$. These elements are well-adapted to the multiplicity-free chain above. For example, we get a new JM-element for each step in the chain and this element commutes with the smaller algebras, that is $L_{k} \in \mathbb{C}S_{k}$ for all $k\geq 1$ and $L_{k}\pi = \pi L_{k}$ for all $\pi \in \mathbb{C}S_{k-1}$. Also the GZ-basis for any simple $\mathbb{C}S_{n}$-module diagonalises the action of the JM-elements $L_{1},L_{2},\dots,L_{n} \in \mathbb{C}S_{n}$. Hence for each GZ-basis element of any simple $\mathbb{C}S_{n}$-module we have an associated $n$-tuple of eigenvalues corresponding to each JM-element. Giving an explicit description of such tuples would in turn allow one to give an explicit description of the branching graph, from which much of the classical results of the representation theory of $\mathbb{C}S_{n}$ are recovered. This is precisely what was done in \cite{OV96}, which gave an alternative means of showing that the branching graph was isomorphic to Young's lattice, that the paths indexing the GZ-basis correspond to standard Young tableau, and that the eigenvalues of the JM-elements for a given GZ-basis element correspond to the ordered content of boxes in the associated standard Young tableau. This approach thus gave quite a natural explanation of the involvement of Young diagrams and Young tableau within the theory.

A result of particular interest for this paper is the fact that the JM-elements of $\mathbb{C}S_{n}$ can be used to give an alternative description of the center $Z(\mathbb{C}S_{n})$. Since we are working with a group algebra, it is well known that the center can be given in terms of class sums, however it was also shown in \cite{Jucys74} and \cite{Murphy83} that the center $Z(\mathbb{C}S_{n})$ equals the subalgebra of symmetric polynomials in the JM-elements. That is for any $k\geq 1$, let
\[ p_{k}(x_{1},\dots,x_{n}) := x_{1}^{k}+x_{2}^{k}+\dots+x_{n}^{k},\]
be the $k$-th symmetric power-sum polynomial in $\mathbb{C}[x_{1},\dots,x_{n}]$. These generate the algebra of symmetric polynomials in $n$ variables. Then we have that $Z(\mathbb{C}S_{n}) = \langle p_{k}(L_{1},\dots,L_{n}) \hspace{1mm} | \hspace{1mm} k \geq 1 \rangle$. This result on the center was later generalised to the affine degenerate Hecke algebras (see for example \cite{Kleshchev05}).

Many of the features of the representation theory of $\mathbb{C}S_{n}$ described above, including the result on the center, have been generalised to similar algebras. These algebras arise from variations of the classical Schur-Weyl duality. Let $V$ be the defining representation of the general linear group $GL_{n}(\mathbb{C})$, then the $r$-fold tensor product $V^{\otimes r}$ has a natural action of $GL_{n}(\mathbb{C})$ given diagonally, and a natural action of $\mathbb{C}S_{r}$ by permuting the tensor components. The classical Schur-Weyl duality given in \cite{Weyl46} shows that the images of these actions are centralisers of one another. If one replaces $GL_{n}(\mathbb{C})$ with the orthogonal (respectively the symplectic) subgroup, then its centraliser can be described by the Brauer algebra $B_{n}(\delta)$ with parameter $\delta = n$ (respectively $\delta=-n$). If instead one replaces $V^{\otimes r}$ with the mixed tensor $V^{\otimes r}\otimes(V^{*})^{\otimes s}$ of the natural representation and its dual for $GL_{n}(\mathbb{C})$, then the resulting centraliser algebra is given by the walled-Brauer algebra $B_{r,s}(\delta)$ with $\delta=n$ (see for example \cite{BCTLLS94}). Both $B_{r}(\delta)$ and $B_{r,s}(\delta)$ can be defined for arbitrary parameters $\delta \in \mathbb{C}$.  

The Brauer algebras $B_{r}(\delta)$ for $r \geq 0$ fit into a multiplicity-free chain which has a corresponding branching graph, from which a GZ-basis can be established for each simple $B_{r}(\delta)$-module. Also there are analogous elements $L_{1},\dots,L_{r} \in B_{r}(\delta)$ which play the role of the JM-elements of $\mathbb{C}S_{n}$, which are often given the same name, and we use the same notation to denote them here. These JM-elements were first introduced by Nazarov in \cite{Nazarov96}. The GZ-basis diagonalised the action of the JM-elements on the simple modules, and the eigenvalues were given an explicit combinatorial description comparable to the symmetric group case. It was also shown that the center $Z(B_{r}(\delta))$, in the semisimple case, can be described by the symmetric odd-power-sum polynomials in the JM-elements. That is, letting $p_{k}(x_{1},\dots,x_{r})$ as above, we have that $Z(B_{r}(\delta)) = \langle p_{2k-1}(L_{1},\dots,L_{r}) \hspace{1mm} | \hspace{1mm} k \geq 1 \rangle$.  Nazarov also used these JM-elements to define an affine degenerate version of the Brauer algebra, where an analogous result was given regarding the center. This result was later generalised to include the quantised case in \cite{DVR11}.

The walled-Brauer algebras $B_{r,s}(\delta)$ for $r,s \geq 0$ fit into a ``multiplicity-free lattice'' as the $r$ and $s$ vary (Theorem 3.3 of \cite{CDDM08}). As such there is no one natural chain of algebras to consider in this case, one has to chose a particular chain to work with from the lattice. Generalisations of JM-elements have been defined in \cite{BS12} and \cite{JK17}. In the latter, they were denoted by $L_{1},\dots,L_{r},L_{r+1},\dots,L_{r+s} \in B_{r,s}(\delta)$, and a particular chain of algebras was defined which was multiplicity-free, induced a GZ-basis for each simple $B_{r,s}(\delta)$-module, and is compatible with the JM-elements in an analogous manner to the other cases. The center of $B_{r,s}(\delta)$, in the semisimple case, was shown to be generated by supersymmetric polynomials in the JM-elements. That is to say we have the following: let $p_{k}(x_{1},\dots,x_{r})$ and $p_{k}(y_{1},\dots,y_{s})$ be the $k$-th symmetric power-sum polynomials in $\mathbb{C}[x_{1},\dots,x_{r}]$ and $\mathbb{C}[y_{1},\dots,y_{s}]$ respectively. Then
\[ q_{k}(x_{1},\dots,x_{r};y_{1},\dots,y_{s}) := p_{k}(x_{1},\dots,x_{r}) + (-1)^{k+1}p_{k}(y_{1},\dots,y_{s}) \]
are the $k$-th power-sum supersymmetric polynomials. These generate the algebra of supersymmetric polynomials. It was shown in \cite{JK17} that $Z(B_{r,s}(\delta)) = \langle q_{k}(L_{1},\dots,L_{r};L_{r+1},\dots,L_{r+s}) \hspace{1mm} | \hspace{1mm} k \geq 1 \rangle$.

\subsection*{The Partition Algebra}

The partition algebras $\mathcal{A}_{2k}(\delta)$, for $k \in \mathbb{Z}_{\geq 0}$ and $\delta \in \mathbb{C}$, are a family of algebras which were first introduced in the works of Martin \cite{Martin91, Martin96} and Jones \cite{Jones94} regarding connections with Potts models in statistical mechanics. In the classical Schur-Weyl duality, if one restricts the action of $GL_{n}(\mathbb{C})$ on $V^{\otimes k}$ to the subgroup of permutation matrices, which can be identified with the symmetric group $S_{n}$, then the centraliser algebra is isomorphic to the partition algebra $\mathcal{A}_{2k}(n)$ whenever $n\geq 2k$, and is a quotient of the partition algebra otherwise. A basis for the partition algebra is given in terms of diagrams, where multiplication is defined by diagram concatenation, extended linearly across the basis. Such algebras are called diagram algebras. The algebras $\mathbb{C}S_{n}$, $B_{r}(\delta)$, and $B_{r,s}(\delta)$ are all examples of diagram algebras, and in fact each can be viewed as a subalgebra of a partition algebra. The partition algebras are semisimple almost always, that is they are semisimple for all but finitely many choices of the parameter $\delta \in \mathbb{C}$. We have a natural chain of algebras $\mathbb{C}:=\mathcal{A}_{0} \subset \mathcal{A}_{2}(\delta) \subset \mathcal{A}_{4}(\delta) \subset \dots$, but unfortunately this chain is not multiplicity-free. However, incorporating intermediate ``odd'' subalgebras, introduced by Martin in \cite{Martin00}, yields a multiplicity-free chain
\[ \mathbb{C} := \mathcal{A}_{0}(\delta) \subset \mathcal{A}_{1}(\delta) \subset \mathcal{A}_{2}(\delta) \subset \mathcal{A}_{3}(\delta) \subset \dots. \]
As was the case for $\mathbb{C}S_{n}$, we have a corresponding branching graph associated to this multiplicity-free chain (see \emph{\Cref{BranchGrDef}}). From this graph a GZ-basis for any simple $\mathcal{A}_{r}(\delta)$-module can be given, which is indexed by particular paths. We also have a collection of commuting elements $L_{1},\dots, L_{r} \in \mathcal{A}_{r}(\delta)$ which are analogous to the JM-elements of $\mathbb{C}S_{n}$, in particular we use the same notation and refer to them as the JM-elements of the partition algebra. These elements were first introduced by Halverson and Ram in \cite{HR05}, where they gave a diagrammatic description. They also showed that the GZ-basis diagonalises the action of the JM-elements and gave a description of the eigenvalues. The JM-elements were later given an alternative recursive definition by Enyang in \cite{Eny12}.

As discussed above, the centers of the algebras $\mathbb{C}S_{n}$, $B_{r}(\delta)$, and $B_{r,s}(\delta)$ are generated by certain (super)symmetric polynomials in their respected JM-elements. For this paper we seek to give an analogous result for the center of $\mathcal{A}_{2k}(\delta)$. The main result is \emph{\Cref{SSP=Center}} where we show that, in the semisimple case, the supersymmetric polynomials (see \emph{\Cref{SSPDef}}) in the normalised JM-elements generate the center of $\mathcal{A}_{2k}(\delta)$. For the previously discussed algebras, the strategy in showing the result on the center can be broken into two broad steps:
\begin{itemize}
\item[(1)] Show that the desired polynomials in the JM-elements are central.
\item[(2)] When the algebra is semisimple, use the action of the JM-elements to show that the correct number of such polynomials are linearly independent. 
\end{itemize}
For each algebra the details of these steps are different, but the overall approach is the same. The first main result of the paper is \emph{\Cref{SSPCentral}}, where we show step (1), which is independent of whether the partition algebra is semisimple or not. For this result we will make heavy use of the definitions and relations established in the work of Enyang in \cite{Eny12} and \cite{Eny13}. It is worth mentioning, that for the other algebras, step (1) is quite straightforward to show. For the partition algebra the situation is much more complicated, in particular showing that the Coxeter generators commute with such polynomials does not reduce to checking a few relations as was the case of \cite{JK17} (see \emph{\Cref{JK17CoxCommuteRmk}}). In the semisimple case, we use the action of the JM-elements on the GZ-basis described first in \cite{HR05}, along with the elementary supersymmetric polynomials (see \emph{\Cref{EleSSPDef}}), to help us show that the center consists precisely of these polynomials. In the non-semisimple case, we only establish that the subalgebra of such polynomials is central, but do not know whether this constitutes the entire center. However, we can show that this central subalgebra is large enough to recognise the block structure of the partition algebra. This allows us to give an alternative criterion for when two simple modules of $\mathcal{A}_{2k}(\delta)$ belong to the same block. 

The structure of the paper will proceed as follows: Section 2 recalls the definition of the partition algebra and the JM-elements, as described by Enyang in \cite{Eny13}. We also collect some of the relations established in both \cite{Eny12} and \cite{Eny13}, and prove some more relations involving the JM-elements. In Section 3 we begin by recalling the definition of supersymmetric polynomials, and the result of \cite{Stem85} in showing that the elementary supersymmetric polynomails generate all such polynomials. We then use the relations established in Section 2 to show that the supersymmetric polynomials in the normalised JM-elements are central in the partition algebra. In Section 4 we will turn our attention to the semisimple case. We recall some of the representation theory of the partition algebras. We are mainly interested in the branching graph given in \emph{\Cref{BranchGrDef}}, and the GZ-basis for simple modules. Then using the action of the JM-elements on such a basis we will show that the action of the supersymmetric polynomials in the normalised JM-elements can distinguish between the simple modules (\emph{\Cref{SSPDistSimples}}). From this, and a result of linear algebra, we will be able to implicitly produce a basis for the subalgebra of such polynomials in the normalised JM-elements, and then a dimension check will confirm that it is the center (\emph{\Cref{SSP=Center}}). Finally in Section 5 we will recall the block structure of the partition algebra established by Martin in \cite{Martin96}. We will show that the block structure can be recovered from the action of the subalgebra of supersymmetric polynomials in the normalised JM-elements. Knowing this we then conclude with an alternative criterion for when two simple modules of the partition algebra belong to the same block.


\section{Definitions}

Throughout this paper, all algebras will be considered over the field $\mathbb{C}
$ of complex numbers unless otherwise stated. Any algebraically closed field of characteristic 0 would equally do. For this section we begin by recalling the definition of the partition algebras and the Jucys-Murphy elements. We will also collect and prove various relations which will be used in the next section.

\subsection{The Partition Algebra $\mathcal{A}_{r}(\delta)$} \
\vspace{2mm}

Let $X$ be a finite set, then recall that a partition of $X$ is a collection $\pi = \{U_{1},\dots,U_{n}\}$ of subsets of $X$ such that $U_{i}\cap U_{j} = \emptyset$ for all $i \neq j$, and $\cup_{1\leq i \leq n}U_{i} = X$. We refer to any $U_{i} \in \pi$ as a \emph{block} of $\pi$. Let $\Pi(X)$ denote the set of all partitions of $X$. 

For any $k \in \mathbb{N}$ we set $[k]:=\{1,2,\dots,k\}$ and $[k']:=\{1',2',\dots,k'\}$. We view $[k]\cup [k']$ as a formal set of $2k$ elements and let $\Pi_{2k} := \Pi([k]\cup [k'])$. Any partition $\pi \in \Pi_{2k}$ can be represented by a graph consisting of two rows of $k$ vertices, where we label the top row of vertices from $1$ to $k$, and label the bottom row of vertices from $1'$ to $k'$. If $v,w \in [k]\cup[k']$ belong to the same block in the partition $\pi$, then they are connected by a path in the graph, i.e. will be within the same connected component of the graph. For example,
\[ \begin{matrix}\begin{tikzpicture}
	\node[V, label=above:{$1$}] (1) at (0,1){};
	\node[V, label=above:{$2$}] (2) at (1,1){};
	\node[V, label=above:{$3$}] (3) at (2,1){};
	\node[V, label=above:{$4$}] (4) at (3,1){};
	\node[V, label=above:{$5$}] (5) at (4,1){};
	\node[V, label=below:{$1'$}] (1') at (0,0){};
	\node[V, label=below:{$2'$}] (2') at (1,0){};
	\node[V, label=below:{$3'$}] (3') at (2,0){};
	\node[V, label=below:{$4'$}] (4') at (3,0){};
	\node[V, label=below:{$5'$}] (5') at (4,0){};
	\draw (1) to [bend right] (2) to [bend right] (3) to (2') (4) to (4') to [bend right] (1') (5) to (5');
	\end{tikzpicture}\end{matrix} \]
\noindent
represents the partition $\{\{1,2,2',3\},\{3'\},\{1',4,4'\},\{5,5'\}\}$ in $\Pi_{10}$. Many such graphs can represent the same partition $\pi$, since vertices within the same block may be connected in many different ways. We do not distinguish between such representations, that is two finite graphs on $2k$ (labelled) vertices are equivalent if and only if they have the same connected components. Then the equivalence classes of such graphs are in a one-to-one correspondence with $\Pi_{2k}$. We refer to such classes as partition diagrams. In this way we may identify any partition $\pi \in \Pi_{2k}$ with its partition diagram. From this we can define a multiplication of partitions $\pi,\gamma \in \Pi_{2k}$ by concatenation of their respected diagrams as follows: The product $\pi \circ \gamma$ is the partition diagram obtained by placing $\pi$ on top of $\gamma$ and identifying the bottom vertices of $\pi$ with the top vertices of $\gamma$, removing any connected components that live completely within the middle row, and then reading off the resulting connected components produced between the top row of $\pi$ and the bottom row of $\gamma$. This is best understood by example.

\begin{ex} \label{DiaMultEx} Let $\pi, \gamma \in \Pi_{10}$ be the partition diagrams
\[ \pi = \begin{matrix}\begin{tikzpicture}
	\node[V, label=above:{$1$}] (1) at (0,1){};
	\node[V, label=above:{$2$}] (2) at (1,1){};
	\node[V, label=above:{$3$}] (3) at (2,1){};
	\node[V, label=above:{$4$}] (4) at (3,1){};
	\node[V, label=above:{$5$}] (5) at (4,1){};
	\node[V, label=below:{$1'$}] (1') at (0,0){};
	\node[V, label=below:{$2'$}] (2') at (1,0){};
	\node[V, label=below:{$3'$}] (3') at (2,0){};
	\node[V, label=below:{$4'$}] (4') at (3,0){};
	\node[V, label=below:{$5'$}] (5') at (4,0){};
	\draw (1) to [bend right] (2) to [bend right] (3) to (2') (4) to (4') to [bend right] (1') (5) to (5');
	\end{tikzpicture}\end{matrix}, \hspace{2mm} \text{ and } \hspace{2mm}  
	\gamma = \begin{matrix}\begin{tikzpicture}
	\node[V, label=above:{$1$}] (1) at (0,1){};
	\node[V, label=above:{$2$}] (2) at (1,1){};
	\node[V, label=above:{$3$}] (3) at (2,1){};
	\node[V, label=above:{$4$}] (4) at (3,1){};
	\node[V, label=above:{$5$}] (5) at (4,1){};
	\node[V, label=below:{$1'$}] (1') at (0,0){};
	\node[V, label=below:{$2'$}] (2') at (1,0){};
	\node[V, label=below:{$3'$}] (3') at (2,0){};
	\node[V, label=below:{$4'$}] (4') at (3,0){};
	\node[V, label=below:{$5'$}] (5') at (4,0){};
	\draw (1) to (1') to [bend left] (2') (2) to (4') (5) to (5') to [bend right] (3');
	\end{tikzpicture}\end{matrix}, \]

then their product is 
\[ \pi \circ \gamma = \begin{matrix}\begin{tikzpicture}
	\node[V, label=above:{$1$}] (1) at (0,1){};
	\node[V, label=above:{$2$}] (2) at (1,1){};
	\node[V, label=above:{$3$}] (3) at (2,1){};
	\node[V, label=above:{$4$}] (4) at (3,1){};
	\node[V, label=above:{$5$}] (5) at (4,1){};
	\node[V] (1') at (0,0){};
	\node[V] (2') at (1,0){};
	\node[V] (3') at (2,0){};
	\node[V] (4') at (3,0){};
	\node[V] (5') at (4,0){};
	\node[V, label=below:{$1'$}] (1'') at (0,-1){};
	\node[V, label=below:{$2'$}] (2'') at (1,-1){};
	\node[V, label=below:{$3'$}] (3'') at (2,-1){};
	\node[V, label=below:{$4'$}] (4'') at (3,-1){};
	\node[V, label=below:{$5'$}] (5'') at (4,-1){};
	\draw (1) to [bend right] (2) to [bend right] (3) to (2') (4) to (4') to [bend right] (1') (5) to (5') (1') to (1'') to [bend left] (2'') (2') to (4'') (5') to (5'') to [bend right] (3''); \end{tikzpicture}\end{matrix} = 
	 \begin{matrix}\begin{tikzpicture}
	\node[V, label=above:{$1$}] (1) at (0,1){};
	\node[V, label=above:{$2$}] (2) at (1,1){};
	\node[V, label=above:{$3$}] (3) at (2,1){};
	\node[V, label=above:{$4$}] (4) at (3,1){};
	\node[V, label=above:{$5$}] (5) at (4,1){};
	\node[V, label=below:{$1'$}] (1') at (0,0){};
	\node[V, label=below:{$2'$}] (2') at (1,0){};
	\node[V, label=below:{$3'$}] (3') at (2,0){};
	\node[V, label=below:{$4'$}] (4') at (3,0){};
	\node[V, label=below:{$5'$}] (5') at (4,0){};
	\draw (1) to [bend right] (2) to [bend right] (3) to (4') (1') to [bend left] (2') to (4) (5) to (5') to [bend right] (3');
	\end{tikzpicture}\end{matrix}. \]
\end{ex}

This multiplication respects the equivalence relation imposed previously and can be shown to be associative. For $\pi,\gamma \in \Pi_{2k}$ let $n(\pi,\gamma)$ denote the number of connected components removed in the process of obtaining the product $\pi \circ \gamma$. For the above example we have that $n(\pi,\gamma)=1$. Let $\delta \in \mathbb{C}$ and $k \in \mathbb{N}$, then the partition algebra, denoted by $\mathcal{A}_{2k}(\delta)$, is the associative unitial algebra with basis given by $\Pi_{2k}$, and with multiplication $\mathcal{A}_{2k}(\delta) \times \mathcal{A}_{2k}(\delta) \rightarrow \mathcal{A}_{2k}(\delta)$ given by
\[ (\pi,\gamma) \mapsto \delta^{n(\pi,\gamma)}\pi \circ \gamma, \]
for any $\pi,\gamma \in \Pi_{2k}$, extended linearly to all of $\mathcal{A}_{2k}(\delta)$. An arbitrary element of $\mathcal{A}_{2k}(\delta)$ is thus a formal linear combination of partition diagrams in $\Pi_{2k}$. We will represent the multiplication of basis elements $\pi$ and $\gamma$ by the concatenation of symbols $\pi\gamma$. The identity element of $\mathcal{A}_{2k}(\delta)$ is the basis vector given by the partition $\{\{i,i'\} \hspace{1mm} | \hspace{1mm} i \in [k]\}$. For convention we set $\mathcal{A}_{0}(\delta) = \mathbb{C}$.

For $i \in [k-1]$ and $j \in [k]$, we define
\[ s_{i} = \begin{matrix}\begin{tikzpicture}
	
	\node[V, label=above:{$1$}] (1) at (0,1){};
	
	\node[draw=none] (0.5) at (0.5,0.5){$\ldots$};
	
	\node[V, label=below:{$1'$}] (1') at (0,0){};
	\node[V] (2) at (1,1){};
	\node[V, label=above:{$i$}] (i) at (2,1){};
	\node[V, label=above:{$i+1$}] (i+1) at (3,1){};
	\node[V] (3) at (4,1){};
	\node[V] (2') at (1,0){};
	\node[V, label=below:{$i'$}] (i') at (2,0){};
	\node[V, label=below:{$(i+1)'$}] (i+1') at (3,0){};
	\node[V] (3') at (4,0){};
	
	\node[draw=none] (4.5) at (4.5,0.5){$\ldots$};
	
	\node[V, label=above:{$k$}] (k) at (5,1){};
	\node[V, label=below:{$k'$}] (k') at (5,0){};
	
	\draw (1) to (1') (2) to (2') (i) to (i+1') (i+1) to (i') (3) to (3') (k) to (k');
	
	\end{tikzpicture}\end{matrix}, \hspace{7.5mm}	
	e_{2j-1} = \begin{matrix}\begin{tikzpicture}
	
	\node[V, label=above:{$1$}] (1) at (0,1){};
	
	\node[draw=none] (0.5) at (0.5,0.5){$\ldots$};
	
	\node[V, label=below:{$1'$}] (1') at (0,0){};
	\node[V] (2) at (1,1){};
	\node[V, label=above:{$j$}] (j) at (2,1){};
	\node[V] (3) at (3,1){};
	\node[V] (2') at (1,0){};
	\node[V, label=below:{$j'$}] (j') at (2,0){};
	\node[V] (3') at (3,0){};
	
	\node[draw=none] (3.5) at (3.5,0.5){$\ldots$};
	
	\node[V, label=above:{$k$}] (k) at (4,1){};
	\node[V, label=below:{$k'$}] (k') at (4,0){};
	
	\draw (1) to (1') (2) to (2') (3) to (3') (k) to (k');
	
	\end{tikzpicture}\end{matrix}, \]

\[ e_{2i} = \begin{matrix}\begin{tikzpicture}
	
	\node[V, label=above:{$1$}] (1) at (0,1){};
	
	\node[draw=none] (0.5) at (0.5,0.5){$\ldots$};
	
	\node[V, label=below:{$1'$}] (1') at (0,0){};
	\node[V] (2) at (1,1){};
	\node[V, label=above:{$i$}] (i) at (2,1){};
	\node[V, label=above:{$i+1$}] (i+1) at (3,1){};
	\node[V] (3) at (4,1){};
	\node[V] (2') at (1,0){};
	\node[V, label=below:{$i'$}] (i') at (2,0){};
	\node[V, label=below:{$(i+1)'$}] (i+1') at (3,0){};
	\node[V] (3') at (4,0){};
	
	\node[draw=none] (4.5) at (4.5,0.5){$\ldots$};
	
	\node[V, label=above:{$k$}] (k) at (5,1){};
	\node[V, label=below:{$k'$}] (k') at (5,0){};
	
	\draw (1) to (1') (2) to (2') (i) to [bend right] (i+1) (i) to (i') (i') to [bend left] (i+1') (i+1) to (i+1') (3) to (3') (k) to (k');
	
	\end{tikzpicture}\end{matrix}. \]

\noindent
The elements $s_{1},\dots,s_{k-1},e_{1},e_{2},\dots,e_{2k-1}$ generate $\mathcal{A}_{2k}(\delta)$. The subalgebra generated by the elements $s_{i}$ for $i\in [k-1]$ is precisely the symmetric group algebra $\mathbb{C}S_{k}$. There is an algebra anti-automorphism $*:\mathcal{A}_{2k}(\delta) \rightarrow \mathcal{A}_{2k}(\delta)$ defined by flipping a partition diagram up-side-down, and extending linearly over $\mathcal{A}_{2k}(\delta)$. We denote the image of $a \in \mathcal{A}_{2k}(\delta)$ under this anti-automorphism by $a^{*}$. In particular we have that the generators above are invariant under $*$. Furthermore, restricting this anti-automorphism to the subalgebra $\mathbb{C}S_{k}$ yields the usual anti-automorphism of $\mathbb{C}S_{k}$ given by inversion. In \cite[Theorem 1.1]{HR05} a presentation for $\mathcal{A}_{2k}(\delta)$ in terms of the above generators was given. We recall some of the relations in this presentation for later use, which can be easily verified diagrammatically. 

\begin{lem} \label{StRels} \
\begin{itemize}
\item[$(1)$] $s_{i}e_{2i}=e_{2i}s_{i}=e_{2i}$.
\item[$(2)$] $s_{i}e_{2i-1}s_{i}=e_{2i+1}$.
\item[$(3)$] $e_{i}e_{i \pm 1}e_{i} = e_{i}$.
\end{itemize}
\end{lem}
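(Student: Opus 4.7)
The plan is to verify each relation by direct diagrammatic calculation, since multiplication in $\mathcal{A}_{2k}(\delta)$ is defined by stacking partition diagrams, removing middle components, and recording a factor of $\delta$ for each such removed component. For all three relations the modifications to the identity diagram happen only near positions $i-1, i, i+1$, so I would draw precisely that local piece for both sides and compare connected components. The main thing to watch is whether an internal loop closes in the middle row (contributing a power of $\delta$); in each of the relations stated it does not, so no $\delta$-scalars appear.

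For \emph{(1)}, the diagram $e_{2i}$ has the single non-trivial block $\{i, i+1, i', (i+1)'\}$ and all other vertices paired straight through. Composing on the left (or right) with $s_i$ inserts a crossing between strands $i$ and $i+1$ in the middle row; but these two strands already belong to the same block of $e_{2i}$ along that row, so the crossing merely re-routes connections already present. The resulting partition on top and bottom rows is again $e_{2i}$, and no loop closes, giving the claimed equality.

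For \emph{(2)}, recall that $e_{2i-1}$ is the identity diagram except that positions $i$ and $i'$ are singletons. Left multiplication by $s_i$ swaps the top vertices $i$ and $i+1$, transporting the top singleton from position $i$ to position $i+1$; right multiplication by $s_i$ does the analogous thing at the bottom, moving the singleton at $i'$ to $(i+1)'$. All other strands pass through unchanged, so the resulting diagram is exactly $e_{2i+1}$, again with no loop formed in either middle row.

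Finally, for \emph{(3)} one splits into four sub-cases according to the parity of $i$ and the choice of sign $\pm$; each is checked by drawing the corresponding three-row stack. The essential features are that the outer $e_i$'s already either join or cut the strands at the relevant positions, so the effect of the middle $e_{i\pm 1}$ is absorbed, and the resulting top-to-bottom connectivity is precisely that of $e_i$. The one step requiring care is to confirm in each of the four cases that no closed middle loop appears, so that the scalar is $\delta^{0}=1$; this can be verified by a direct inspection of the connected components in the picture. I expect this loop-counting to be the only mildly delicate point, but since each case involves only a small finite diagram, the verification is mechanical.
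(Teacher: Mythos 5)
Your proposal is correct and matches the paper's approach: the paper gives no written proof, simply noting that these relations from the Halverson--Ram presentation ``can be easily verified diagrammatically,'' which is precisely the local diagram-concatenation and loop-counting check you carry out.
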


The partition algebra $\mathcal{A}_{2k}(\delta)$ has a subalgebra, denoted by $\mathcal{A}_{2k-1}(\delta)$, which is spanned by the partitions $\pi \in \Pi_{2k}$ such that $k$ and $k'$ belong to the same block of $\pi$. In turn, the algebra $\mathcal{A}_{2k-1}(\delta)$ contains $\mathcal{A}_{2k-2}(\delta)$ as a subalgebra by identifying $\mathcal{A}_{2k-2}(\delta)$ with the span of all partitions $\pi \in \Pi_{2k}$ with $\{k,k'\}$ as a block. From such identifications we obtain a chain of algebras
\[ \mathcal{A}_{0}(\delta) \subset \mathcal{A}_{1}(\delta) \subset \mathcal{A}_{2}(\delta) \subset \mathcal{A}_{3}(\delta) \subset \dots \subset \mathcal{A}_{2k}(\delta). \]
In terms of the above generators $\mathcal{A}_{2k-1}(\delta) = \langle s_{i},e_{j} \hspace{1mm} | \hspace{1mm} i \in [k-2], j \in [2k-2]  \rangle \subset \mathcal{A}_{2k}(\delta)$, that is we have dropped the generators $s_{k-1}$ and $e_{2k-1}$. Then to obtain $\mathcal{A}_{2k-2}(\delta)$, we further drop the generator $e_{2k-2}$. Whenever we are referring to a subalgebra $\mathcal{A}_{r}(\delta) \subset \mathcal{A}_{2k}(\delta)$, we will be using the identification described in the above chain. Furthermore, we will use the variable $r$ in the index to represent any parity, while we will use $2k$ or $2k\pm 1$ otherwise.

\subsection{The Jucys-Murphy Elements} \
\vspace{2mm}

In this section we will give the definition of the Jucys-Murphy elements (JM-elements) of the partition algebra, and describe some properties and relations regarding them. For each inclusion $\mathcal{A}_{r-1}(\delta) \subset \mathcal{A}_{r}(\delta)$ we obtain a new JM-element $L_{r} \in \mathcal{A}_{r}(\delta)$, and this element belongs to the centraliser subalgebra
\[ Z(\mathcal{A}_{r-1}(\delta),\mathcal{A}_{r}(\delta)):=\langle z \in \mathcal{A}_{r}(\delta) \hspace{1mm} | \hspace{1mm} za=az \text{ for all } a \in\mathcal{A}_{r-1}(\delta) \rangle. \] 
These elements were first introduced in \cite{HR05}, where they described them diagrammatically. It proves quite difficult to establish relations between the JM-elements and the generators of $\mathcal{A}_{r}(\delta)$ uses this diagrammatic definition. Fortunately Enyang has given a recursive definition of the JM-elements in \cite{Eny12}, alongside new elements $\sigma_{i}$, which we both recall below.

\begin{rmk}
This subsection relies on the work of Enyang from both \cite{Eny12} and \cite{Eny13}. However there is a change of notation between these two papers. We will be adopting the notation used in \cite{Eny13}, in particular the definition below is from \cite{Eny13}. To ease reference checking, the conversion of notation from \cite{Eny12} to \cite{Eny13} respectively is given by $p_{i} \sim e_{2i-1}$, $p_{i+\frac{1}{2}} \sim e_{2i}$, $\sigma_{i} \sim \sigma_{2i-1}$, $\sigma_{i+\frac{1}{2}} \sim \sigma_{2i}$, $L_{i} \sim L_{2i}$, and $L_{i+\frac{1}{2}} \sim L_{2i+1}$.
\end{rmk}

\begin{defn} \label{EnyJMDef}
Let $L_{1}=0, L_{2}=e_{1}, \sigma_{2}=1$, and $\sigma_{3}=s_{1}$. Then for $i=1,2,\dots,$ define
\[ L_{2i+2} = s_{i}L_{2i}s_{i} - s_{i}L_{2i}e_{2i} - e_{2i}L_{2i}s_{i} + e_{2i}L_{2i}e_{2i+1}e_{2i} + \sigma_{2i+1}, \]
where, for $i=2,3,\dots,$ we have
\begin{align*}
\sigma_{2i+1} = s_{i-1}s_{i}\sigma_{2i-1}s_{i}&s_{i-1}+s_{i}e_{2i-2}\mathbf{L_{2i-2}}s_{i}e_{2i-2}s_{i} + e_{2i-2}L_{2i-2}s_{i}e_{2i-2} \\
&- s_{i}e_{2i-2}L_{2i-2}s_{i-1}e_{2i}e_{2i-1}e_{2i-2} - e_{2i-2}e_{2i-1}e_{2i}s_{i-1}L_{2i-2}e_{2i-2}s_{i}.
\end{align*}
Also for $i=1,2,\dots,$ define
\[ L_{2i+1} = s_{i}L_{2i-1}s_{i} - L_{2i}e_{2i} - e_{2i}L_{2i} + (\delta-L_{2i-1})e_{2i} + \sigma_{2i}, \]
where, for $i=2,3,\dots,$ we have
\begin{align*}
\sigma_{2i} = s_{i-1}s_{i}\sigma_{2i-2}s_{i}&s_{i-1}+e_{2i-2}\mathbf{L_{2i-2}}s_{i}e_{2i-2}s_{i} + s_{i}e_{2i-2}\mathbf{L_{2i-2}}s_{i}e_{2i-2} \\
&-e_{2i-2}\mathbf{L_{2i-2}}s_{i-1}e_{2i}e_{2i-1}e_{2i-2} - s_{i}e_{2i-2}e_{2i-1}e_{2i}s_{i-1}\mathbf{L_{2i-2}}e_{2i-2}s_{i}.
\end{align*}
\end{defn}   

In Section 2.3 of \cite{Eny13} there were a few typos in the definition of the JM-elements due to the change in notation. We have corrected these and made them bold in the above definition. A straightforward proof by induction yields the fact that $L_{i}$ belongs to $\mathcal{A}_{i}(\delta)$ and $\sigma_{i}$ belongs to $\mathcal{A}_{i+1}(\delta)$. Enyang showed that the elements $e_{1},e_{2},\dots,e_{2k-1},\sigma_{2},\sigma_{3},\dots,\sigma_{2k-1}$ generate $\mathcal{A}_{2k}(\delta)$, moreover a presentation of $\mathcal{A}_{2k}(\delta)$ in terms of these generators was given (see Theorem 4.1 of \cite{Eny12}). Remarkably, although the diagrammatic description of the elements $\sigma_{i}$ gets quite complicated, the relations in Enyang's presentation are very simple. We will need a variety of the relations and facts regarding the elements $L_{i}$ and $\sigma_{i}$ established in both \cite{Eny12} and \cite{Eny13} throughout this paper. We collect below such results.   

\begin{lem} \label{EnyRels}
Whenever the indices make sense, we have the following relations:

\begin{itemize}
\item[(1)] \emph{(Sigma Relations)}
\begin{itemize}
\item[(i)] $\sigma_{i}^{*} = \sigma_{i}$
\item[(ii)] $\sigma_{i}^{2} = 1$
\item[(iii)] $\sigma_{2i}\sigma_{2i+1}=\sigma_{2i+1}\sigma_{2i}=s_{i}$
\item[(iv)] $\sigma_{i}$ commutes with $\mathcal{A}_{i-2}(\delta)$
\item[(v)] $\sigma_{2i}e_{2i} = e_{2i}\sigma_{2i} = e_{2i}$
\item[(vi)] $\sigma_{2i+1}e_{2i} = e_{2i}\sigma_{2i+1} = e_{2i}$
\end{itemize}
\item[(2)] \emph{(JM Relations)}
\begin{itemize}
\item[(i)] $L_{i}^{*}=L_{i}$
\item[(ii)] $L_{i}L_{j}=L_{j}L_{i}$
\item[(iii)] $\sum_{i=1}^{r}L_{i}$ is central in $\mathcal{A}_{r}(\delta)$
\item[(iv)] $L_{i}$ commutes with $\mathcal{A}_{i-1}(\delta)$
\end{itemize}
\item[(3)] \emph{(Mixed Relations)}
\begin{itemize}
\item[(i)] $e_{2i+1}\sigma_{2i}e_{2i+1}=(\delta-L_{2i-1})e_{2i+1}$
\item[(ii)] $e_{i}(L_{i}+L_{i+1})=(L_{i}+L_{i+1})e_{i} = \delta e_{i}$
\item[(iii)] $\sigma_{2i}e_{2i-1}e_{2i}=L_{2i}e_{2i}$, and $e_{2i}e_{2i-1}\sigma_{2i}=e_{2i}L_{2i}$
\item[(iv)] $\sigma_{2i+1}e_{2i+1}e_{2i}=L_{2i}e_{2i}$, and $e_{2i}e_{2i+1}\sigma_{2i+1}=e_{2i}L_{2i}$
\end{itemize}
\end{itemize}

\end{lem}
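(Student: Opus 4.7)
The statement is a bundled compilation of results essentially due to Enyang, so my plan is to proceed by citation where possible and by short induction where direct computation is needed, rather than re-deriving every identity. Concretely, the sigma relations (1)(i)--(iv) and the JM relations (2)(i)--(iv) are the skeleton: these are proved in \cite{Eny12, Eny13} by a simultaneous induction on the index $i$, using the recursive definitions in \Cref{EnyJMDef}. I would simply reference those results, taking care to translate notation via the dictionary $p_i\sim e_{2i-1}$, $p_{i+1/2}\sim e_{2i}$, $\sigma_i\sim\sigma_{2i-1}$, $\sigma_{i+1/2}\sim\sigma_{2i}$, $L_i\sim L_{2i}$, $L_{i+1/2}\sim L_{2i+1}$ that is spelled out in the remark.

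For the remaining identities I would give short direct arguments. For (1)(v)--(vi), $\sigma_{2i}e_{2i}=e_{2i}$ and $\sigma_{2i+1}e_{2i}=e_{2i}$ follow by induction on $i$: each term in the recursive expansion of $\sigma_{2i}$ ends in a factor that, when multiplied by $e_{2i}$ on the right, collapses via the basic diagrammatic relations $e_je_{j\pm1}e_j=e_j$ of \Cref{StRels}(3) and the induction hypothesis applied at level $i-1$. The right-handed versions follow by applying $*$ together with (1)(i) and (2)(i). For (3)(i), the identity $e_{2i+1}\sigma_{2i}e_{2i+1}=(\delta-L_{2i-1})e_{2i+1}$ is obtained by conjugating the corresponding relation for $\sigma_{2i-1}$ (the base case $i=1$ reduces to $e_3 e_2 e_3 = e_3$ together with $L_1=0$) and using the recursion for $L_{2i+1}$ in \Cref{EnyJMDef}, rearranged to isolate $\sigma_{2i}$. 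Identity (3)(ii) is the classical ``trace-like'' relation $e_i(L_i+L_{i+1})=\delta e_i$; it follows from the recursive definition of $L_{i+1}$ since each off-diagonal term $s_iL_is_i$, $s_iL_ie_{2i}$, $e_{2i}L_is_i$, $e_{2i}L_i e_{2i+1}e_{2i}$ etc.\ either multiplies to give $L_i$ back or contributes the $\delta$-absorption via $e_je_{j\pm1}e_j=e_j$ and (2)(iv).

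For (3)(iii)--(iv), I would unpack $\sigma_{2i}e_{2i-1}e_{2i}$ term by term in the recursion: the first summand $s_{i-1}s_i\sigma_{2i-2}s_is_{i-1}\cdot e_{2i-1}e_{2i}$ reduces via $s_is_{i-1}e_{2i-1}e_{2i}=e_{2i-3}e_{2i-2}$-type relations (which are checked diagrammatically), and the remaining summands simplify using (2)(iv), (1)(v)--(vi), and the basic cup/cap identities, leaving precisely $L_{2i}e_{2i}$; the relation (3)(iv) is obtained analogously from the recursion for $\sigma_{2i+1}$.

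The main obstacle will be notational: the recursive formulas for $\sigma_{2i}$ and $\sigma_{2i+1}$ involve five summands each, and carrying out the simplifications in (3)(iii)--(iv) requires systematically tracking which factors meet $e_{2i-1}e_{2i}$ on the right. I would present those checks as short computations in display-style, leveraging (1)(v)--(vi) to kill off $\sigma$'s adjacent to the correct $e_j$, and citing (2)(iv) to slide $L_{2i-2}$ past factors living in $\mathcal{A}_{2i-3}(\delta)$. All other items are either direct from \cite{Eny12, Eny13} or follow immediately by applying $*$.
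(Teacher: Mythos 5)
The paper's own ``proof'' of this lemma is purely bibliographic: every item is located in \cite{Eny12} ((1)(iii), (1)(v), (1)(vi) in Theorem 4.1; (1)(iv) and (2)(iv) in Theorem/Proposition 3.8; (2)(iii) in Proposition 3.10; (3)(i) in Proposition 4.3(2); (3)(ii) in Proposition 3.9; (3)(iii)--(iv) from Proposition 3.2(3),(4) after left-multiplying by $s_i$ and applying $*$). Your citation of the ``skeleton'' items therefore matches the paper exactly, but your decision to rederive (1)(v)--(vi) and (3)(i)--(iv) from the recursions in \emph{\Cref{EnyJMDef}} is a genuinely different, and much heavier, route: these identities are themselves part of Enyang's presentation and are proved there with nontrivial effort, so re-deriving them buys nothing except independence from the source.

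As written, the rederivation sketches have real gaps. For (1)(v)--(vi) you claim each summand of the recursion for $\sigma_{2i}$ ``ends in a factor that collapses'' against $e_{2i}$ via $e_je_{j\pm1}e_j=e_j$ and the induction hypothesis; but the first summand is $s_{i-1}s_i\sigma_{2i-2}s_is_{i-1}$, and after multiplying by $e_{2i}$ on the right the inductive hypothesis $\sigma_{2i-2}e_{2i-2}=e_{2i-2}$ is not directly applicable --- you must first move $e_{2i}$ past $s_is_{i-1}$ and identify the conjugate $s_is_{i-1}e_{2i}s_{i-1}s_i$, none of which is addressed. Similarly, the (3)(iii)--(iv) computation is asserted (``leaving precisely $L_{2i}e_{2i}$'') rather than carried out, and the five-term cancellation there is exactly the hard part. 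Since every one of these identities is stated explicitly in \cite{Eny12}, the correct and sufficient move is to cite them as the paper does; if you insist on rederiving them, the sketches above would need to be expanded into full computations before they count as proofs.
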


\begin{proof}
All of these relations can be found in \cite{Eny12}, we simply give details of where to find them. $(1)(i)$ for the even indices is \cite[Proposition 3.3 (1)]{Eny12}, and for the odd indices it then follows from \cite[Proposition 3.4]{Eny12}. $(1)(ii)$ is given in \cite[Proposition 4.2]{Eny12}. $(1)(iii)$ is given in \cite[Theorem 4.1]{Eny12}. $(1)(iv)$ is given by \cite[Theorem 3.8]{Eny12}. Both $(1)(v)$ and $(1)(vi)$ are shown in \cite[Theorem 4.1]{Eny12}. $(2)(i)$ follows by definition for even indices (noting that $L_{2i}$ commutes with $e_{2i+1}$), and odd indices by \cite[Proposition 3.3 (2)]{Eny12}. $(2)(ii)$ follows from $(2)(iv)$ which is given by \cite[Proposition 3.8]{Eny12}. $(2)(iii)$ is precisely \cite[Proposition 3.10]{Eny12}. $(3)(i)$ is \cite[Proposition 4.3 (2)]{Eny12}. $(3)(ii)$ is \cite[Proposition 3.9 (1) and (2)]{Eny12}. The first equality of $(3)(iv)$ is \cite[Proposition 3.2 (4)]{Eny12}, and the second follows by acting on that equality by $*$. The first equality of $(3)(iii)$ is obtained from \cite[Proposition 3.2 (3)]{Eny12} by left multiplying by $s_{i}$, and again the second follows via $*$.

\end{proof}

We will mainly be interested in working with the normalised JM-elements, defined to be $N_{i} := L_{i} - \delta/2$. These elements are better suited to describe the center of $\mathcal{A}_{2k}(\delta)$. We will now establish some relations regarding the elements $N_{i}$ which will be used extensively in the next section. We first show which generators commute with the normalised JM-elements.

\begin{lem} \label{CommutingRels}
We have the following commuting relations: 

\begin{itemize}
\item[(1)] $e_{i}N_{j} = N_{j}e_{i}$ for all $j\neq i,i+1$.
\item[(2)] $s_{i}N_{j} = N_{j}s_{i}$ for all $j\neq 2i-1,2i,2i+1,2i+2$.
\item[(3)] $\sigma_{2i+1}N_{j} = N_{j}\sigma_{2i+1}$ for all $j\neq 2i,2i+1,2i+2$.
\item[(4)] $\sigma_{2i}N_{j} = N_{j}\sigma_{2i}$ for all $j\neq 2i-1,2i,2i+1$.
\end{itemize}
\end{lem}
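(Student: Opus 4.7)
Since $N_{j}=L_{j}-\delta/2$ is a scalar shift of $L_{j}$, we have $[N_{j},x]=[L_{j},x]$ for every $x$, so it suffices to prove each of $(1)$--$(4)$ with $L_{j}$ in place of $N_{j}$. The plan is to split each claim according to whether $j$ is strictly above or strictly below the ``window'' of excluded indices, and to dispatch each half using one of the two general centraliser principles already collected in \Cref{EnyRels}: part $(2)(iv)$, asserting that $L_{j}$ centralises $\mathcal{A}_{j-1}(\delta)$, and part $(1)(iv)$, asserting that $\sigma_{m}$ centralises $\mathcal{A}_{m-2}(\delta)$. To apply these I first record the (diagrammatically immediate) memberships $e_{2m-1}\in\mathcal{A}_{2m}(\delta)$, $e_{2m}\in\mathcal{A}_{2m+1}(\delta)$, $s_{i}\in\mathcal{A}_{2i+2}(\delta)$, and $\sigma_{m}\in\mathcal{A}_{m+1}(\delta)$ (the last already noted after \Cref{EnyJMDef}).

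For part $(3)$: if $j\geq 2i+3$ then $\sigma_{2i+1}\in\mathcal{A}_{2i+2}(\delta)\subseteq\mathcal{A}_{j-1}(\delta)$, so $(2)(iv)$ applies; if $j\leq 2i-1$ then $L_{j}\in\mathcal{A}_{2i-1}(\delta)=\mathcal{A}_{(2i+1)-2}(\delta)$, so $(1)(iv)$ applies. Part $(4)$ is identical after shifting all indices down by one. Part $(2)$ then follows immediately from the identity $s_{i}=\sigma_{2i}\sigma_{2i+1}$ of \Cref{EnyRels}$(1)(iii)$: for $j\notin\{2i-1,2i,2i+1,2i+2\}$ both $\sigma_{2i}$ and $\sigma_{2i+1}$ commute with $L_{j}$ by $(4)$ and $(3)$ respectively, and passing $L_{j}$ through each factor of $s_{i}=\sigma_{2i}\sigma_{2i+1}$ in turn yields $s_{i}L_{j}=L_{j}s_{i}$.

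The main obstacle is part $(1)$ in the regime $j\leq i-1$, since \Cref{EnyRels} supplies no centraliser statement for $e_{i}$ analogous to $(1)(iv)$. The opposite regime $j\geq i+2$ again reduces to $(2)(iv)$ via $e_{i}\in\mathcal{A}_{i+1}(\delta)\subseteq\mathcal{A}_{j-1}(\delta)$, so the task is to verify directly, from the diagrammatic definition, that $e_{i}$ commutes with all of $\mathcal{A}_{i-1}(\delta)$. For $i=2m-1$ this is essentially automatic: every element of $\mathcal{A}_{2m-2}(\delta)$ has $\{m,m'\}$ as a block and is supported on columns $1,\dots,m-1$, so its product with $e_{2m-1}$ (which merely breaks the pair $\{m,m'\}$ into two singletons, leaving the other columns as vertical strands) is independent of the order of composition. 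For $i=2m$ a slightly more careful diagram chase is required, since $e_{2m}$ acts non-trivially on column $m$ and elements of $\mathcal{A}_{2m-1}(\delta)$ may do so too; however, the defining constraint that $m$ and $m'$ share a block of any $a\in\mathcal{A}_{2m-1}(\delta)$, together with the big block $\{m,m+1,m',(m+1)'\}$ of $e_{2m}$ and the pair block $\{m+1,(m+1)'\}$ forced to lie in $a$, forces $ae_{2m}$ and $e_{2m}a$ to produce the same connected components on the top and bottom rows (and the same loop count). Once this diagrammatic symmetry is checked, all four parts are complete.
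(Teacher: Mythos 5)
Your proof is correct and takes essentially the same route as the paper's: split each claim according to whether $j$ lies above or below the excluded window, handling the upper range via $L_{j}$ centralising $\mathcal{A}_{j-1}(\delta)$ together with the memberships $e_{i}\in\mathcal{A}_{i+1}(\delta)$, $\sigma_{m}\in\mathcal{A}_{m+1}(\delta)$, and the lower range via the centraliser property of $\sigma_{m}$ (resp.\ a direct diagrammatic check that $e_{i}$ commutes with $\mathcal{A}_{i-1}(\delta)$, which the paper also invokes but leaves implicit). The only variation is that you deduce $(2)$ from $(3)$ and $(4)$ via $s_{i}=\sigma_{2i}\sigma_{2i+1}$, where the paper instead repeats the argument of $(1)$; both are valid.
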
 

\begin{proof}
(1): We have $N_{j} \in \mathcal{A}_{j}(\delta)$, so diagrammatically one can see that $e_{2i}$ commutes with $N_{1},N_{2},\dots,N_{2i-1}$, and that $e_{2i-1}$ commutes with $N_{1},N_{2},\dots,N_{2i-2}$. Also $N_{j}$ commutes with $\mathcal{A}_{j-1}(\delta)$ by \emph{\Cref{EnyRels}} (2)(iv), and $e_{i} \in \mathcal{A}_{i+1}(\delta)$, hence $e_{2i}$ commutes with $N_{2i+2},N_{2i+3},\dots,N_{2k}$, and $e_{2i-1}$ commutes with $N_{2i+1},N_{2i+2},\dots,N_{2k}$. Thus collectively $e_{2i}$ commutes with all $N_{j}$ for $j \neq 2i,2i+1$, and $e_{2i-1}$ commutes with all $N_{j}$ for $j \neq 2i-1,2i$, which gives (1). Arguing in the same manner gives (2).

(3): By \emph{\Cref{EnyRels}} (1)(iv), $\sigma_{2i+1}$ commutes with $\mathcal{A}_{2i-1}(\delta)$, and so it commutes with $N_{1},N_{2},\dots,N_{2i-1}$. Furthermore, $N_{j}$ commutes with $\mathcal{A}_{j-1}(\delta)$ by \emph{\Cref{EnyRels}} (2)(iv), and $\sigma_{2i+1} \in \mathcal{A}_{2i+2}(\delta)$, so we see that $\sigma_{2i+1}$ commutes with $N_{2k+3},N_{2k+4},\dots,N_{2k}$. This completes (3). A similar argument shows (4).

\end{proof}

We would like to know what interaction occurs between the Coxeter generators and the normalised JM-elements when they do not commute. The lemma below can be used to give us relations of the form $s_{i}N_{2i}=N_{2i+2}s_{i}+C$ and $s_{i}N_{2i-1}=N_{2i+1}s_{i}+D$, where $C$ and $D$ are some linear combination of diagrams, which will come in handy in the next section. 

\begin{lem} \label{CoxSkeinRels}
We have the following relations:

\begin{itemize}
\item[(1)] $N_{2i+1} = s_{i}N_{2i-1}s_{i} - \sigma_{2i}e_{2i-1}e_{2i} - e_{2i}e_{2i-1}\sigma_{2i} + e_{2i}e_{2i-1}\sigma_{2i}e_{2i-1}e_{2i} + \sigma_{2i}$. 
\item[(2)] $N_{2i+2} = s_{i}N_{2i}s_{i} - \sigma_{2i+1}e_{2i-1}e_{2i} - e_{2i}e_{2i-1}\sigma_{2i+1} + e_{2i}e_{2i+1}\sigma_{2i+1}e_{2i+1}e_{2i} + \sigma_{2i+1}$.
\end{itemize}
\end{lem}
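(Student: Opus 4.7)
The plan is to derive both identities by direct algebraic manipulation, taking as input Enyang's recursive definitions of $L_{2i+1}$ and $L_{2i+2}$ from \Cref{EnyJMDef}, substituting $L_j = N_j + \delta/2$, and then converting the $L$-dependent middle terms into $\sigma$-expressions via the mixed relations of \Cref{EnyRels}. Since $s_i^2 = 1$, each conjugation $s_i L_{j} s_i$ contributes an extra $\delta/2$ that cancels the $-\delta/2$ coming from $N_{2i+1} = L_{2i+1} - \delta/2$ (respectively $N_{2i+2} = L_{2i+2} - \delta/2$), so the only substantive work is to identify the remaining summands on either side.

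For part (1), after the substitution above the identity reduces to showing
\[ -L_{2i}e_{2i} - e_{2i}L_{2i} + (\delta - L_{2i-1})e_{2i} = -\sigma_{2i}e_{2i-1}e_{2i} - e_{2i}e_{2i-1}\sigma_{2i} + e_{2i}e_{2i-1}\sigma_{2i}e_{2i-1}e_{2i}. \]
The first two terms match immediately via \Cref{EnyRels}(3)(iii), so the task collapses to checking $(\delta - L_{2i-1})e_{2i} = e_{2i}e_{2i-1}\sigma_{2i}e_{2i-1}e_{2i}$. Applying \Cref{EnyRels}(3)(iii) once more converts the right-hand side to $e_{2i}e_{2i-1}L_{2i}e_{2i}$; then \Cref{EnyRels}(3)(ii) gives $e_{2i-1}L_{2i} = \delta e_{2i-1} - e_{2i-1}L_{2i-1}$, the Temperley–Lieb-like relation $e_{2i}e_{2i-1}e_{2i} = e_{2i}$ from \Cref{StRels}(3) absorbs the resulting $e_{2i}e_{2i-1}e_{2i}$ factor, and the commutation of $L_{2i-1}$ with $e_{2i}$ coming from \Cref{CommutingRels}(1) puts the remainder in the desired form $(\delta - L_{2i-1})e_{2i}$.

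For part (2) the reduction is even more direct. After substitution it suffices to verify the three equalities $s_{i}L_{2i}e_{2i} = \sigma_{2i+1}e_{2i-1}e_{2i}$, $e_{2i}L_{2i}s_{i} = e_{2i}e_{2i-1}\sigma_{2i+1}$, and $e_{2i}L_{2i}e_{2i+1}e_{2i} = e_{2i}e_{2i+1}\sigma_{2i+1}e_{2i+1}e_{2i}$. The last follows at once from \Cref{EnyRels}(3)(iv) in the form $e_{2i}L_{2i} = e_{2i}e_{2i+1}\sigma_{2i+1}$. For the first two, combine \Cref{EnyRels}(3)(iii) with the identity $s_{i}\sigma_{2i} = \sigma_{2i+1} = \sigma_{2i}s_{i}$, which is obtained by multiplying the relation $\sigma_{2i}\sigma_{2i+1} = s_{i}$ from \Cref{EnyRels}(1)(iii) on the left (resp.\ right) by $\sigma_{2i}$ and using $\sigma_{2i}^{2} = 1$ from \Cref{EnyRels}(1)(ii).

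The only non-routine step is the one in part (1) that identifies $e_{2i}e_{2i-1}\sigma_{2i}e_{2i-1}e_{2i}$ with $(\delta - L_{2i-1})e_{2i}$; it is the single place where several of Enyang's relations must be chained together. Everything else is substitution, cancellation of $\delta/2$ terms, and the conversion $s_{i}\sigma_{2i} = \sigma_{2i+1}$. I therefore expect the write-up to be a compact computation rather than a structural argument.
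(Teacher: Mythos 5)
Your proposal is correct, and its overall skeleton coincides with the paper's: start from Enyang's recursion, observe that the $\delta/2$ shifts cancel under conjugation by $s_i$, and convert the $L$-dependent summands into $\sigma$-expressions via \Cref{EnyRels}(3)(iii)--(iv) and $s_i\sigma_{2i}=\sigma_{2i}s_i=\sigma_{2i+1}$. The two nontrivial substeps are, however, handled by genuinely different relation-chains. For the identity $(\delta-L_{2i-1})e_{2i}=e_{2i}e_{2i-1}\sigma_{2i}e_{2i-1}e_{2i}$ in part (1), the paper works forwards from the left-hand side: it inserts $e_{2i}e_{2i+1}e_{2i}=e_{2i}$, invokes \Cref{EnyRels}(3)(i) in the form $e_{2i+1}\sigma_{2i}e_{2i+1}=(\delta-L_{2i-1})e_{2i+1}$, and then shifts the index from $2i+1$ down to $2i-1$ using $s_i\sigma_{2i}s_i=\sigma_{2i}$ together with $s_ie_{2i+1}=e_{2i-1}s_i$ and $e_{2i}s_i=e_{2i}$. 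You work backwards from the right-hand side using only \Cref{EnyRels}(3)(iii), the relation $e_{2i-1}(L_{2i-1}+L_{2i})=\delta e_{2i-1}$ from (3)(ii), the absorption $e_{2i}e_{2i-1}e_{2i}=e_{2i}$, and the commutation of $L_{2i-1}$ with $e_{2i}$; this avoids relation (3)(i) and the conjugation trick entirely and is, if anything, shorter. Similarly, in part (2) you dispatch $e_{2i}L_{2i}e_{2i+1}e_{2i}=e_{2i}e_{2i+1}\sigma_{2i+1}e_{2i+1}e_{2i}$ in one step from (3)(iv), where the paper routes through (3)(iii) and the braid moves $e_{2i-1}s_i=s_ie_{2i+1}$ again. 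Both arguments are valid and rest only on relations established before this lemma; yours trades the paper's index-shifting manipulations for a slightly leaner use of the mixed relations.
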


\begin{proof}
(1): Firstly, by \emph{\Cref{EnyRels}} (1)(ii) and (1)(iii) we have that
\[ s_{i}\sigma_{2i}s_{i} = s_{i}\sigma_{2i}\sigma_{2i}\sigma_{2i+1} = s_{i}\sigma_{2i+1} = \sigma_{2i}\sigma_{2i+1}\sigma_{2i+1} = \sigma_{2i}. \]
Now we show that $(\delta - L_{2i-1})e_{2i} = e_{2i}e_{2i-1}\sigma_{2i}e_{2i-1}e_{2i}$:
\begin{align*}
(\delta-L_{2i-1})e_{2i} &= (\delta-L_{2i-1})e_{2i}e_{2i+1}e_{2i} \hspace{6mm} (\text{by } \Cref{StRels} (3)) \\
&= e_{2i}(\delta-L_{2i-1})e_{2i+1}e_{2i} \hspace{6mm} (\text{by } \Cref{CommutingRels} (1)) \\
&= e_{2i}e_{2i+1}\sigma_{2i}e_{2i+1}e_{2i} \hspace{11mm} (\text{by } \Cref{EnyRels} (3)(i)) \\
&= e_{2i}e_{2i+1}s_{i}\sigma_{2i}s_{i}e_{2i+1}e_{2i} \hspace{5mm} (\text{Since } s_{i}\sigma_{2i}s_{i}=\sigma_{2i}) \\
&= e_{2i}s_{i}e_{2i-1}\sigma_{2i}e_{2i-1}s_{i}e_{2i} \hspace{5mm} (\text{by } \Cref{StRels} (2)) \\
&= e_{2i}e_{2i-1}\sigma_{2i}e_{2i-1}e_{2i}. \hspace{10mm} (\text{by } \Cref{StRels} (1))
\end{align*}
From this we obtain
\begin{align*}
L_{2i+1} &\defeq s_{i}L_{2i-1}s_{i}-L_{2i}e_{2i}-e_{2i}L_{2i} + (\delta-L_{2i-1})e_{2i} + \sigma_{2i}, \\
&= s_{i}L_{2i-1}s_{i}-L_{2i}e_{2i}-e_{2i}L_{2i} + e_{2i}e_{2i-1}\sigma_{2i}e_{2i-1}e_{2i} + \sigma_{2i}, \\
&= s_{i}L_{2i-1}s_{i}-\sigma_{2i}e_{2i-1}e_{2i}-e_{2i}e_{2i-1}\sigma_{2i} + e_{2i}e_{2i-1}\sigma_{2i}e_{2i-1}e_{2i} + \sigma_{2i},
\end{align*}
where the second equality followed from the substitution $(\delta - L_{2i-1})e_{2i} = e_{2i}e_{2i-1}\sigma_{2i}e_{2i-1}e_{2i}$, and the last follows from \emph{\Cref{EnyRels}} (3)(iii). Since $N_{i}:=L_{i}-\delta/2$, it is clear that replacing $L_{2i+1}$ and $L_{2i-1}$ with $N_{2i+1}$ and $N_{2i-1}$ respectively will still yield a valid equality.

(2): First we have
\begin{align*}
s_{i}L_{2i}e_{2i} &= s_{i}\sigma_{2i}e_{2i-1}e_{2i} \hspace{6mm} (\text{by } \Cref{EnyRels} (3)(iii)) \\
&= \sigma_{2i+1}e_{2i-1}e_{2i} \hspace{6mm} (\text{Since } s_{i}\sigma_{2i} = \sigma_{2i+1}).
\end{align*}
Using the anti-automorphism $*$ we obtain $e_{2i}L_{2i}s_{i} = e_{2i}e_{2i-1}\sigma_{2i+1}$. We also have that
\begin{align*}
e_{2i}L_{2i}e_{2i+1}e_{2i} &= e_{2i}e_{2i-1}\sigma_{2i}e_{2i+1}e_{2i} \hspace{12mm} (\text{by } \Cref{EnyRels} (3)(iii)) \\
&= e_{2i}e_{2i-1}s_{i}\sigma_{2i+1}e_{2i+1}e_{2i} \hspace{6mm} (\text{by } \Cref{EnyRels} (1)(iii)) \\
&= e_{2i}s_{i}e_{2i+1}\sigma_{2i+1}e_{2i+1}e_{2i} \hspace{6mm} (\text{by } \Cref{StRels} (2)) \\
&= e_{2i}e_{2i+1}\sigma_{2i+1}e_{2i+1}e_{2i}. \hspace{8mm} (\text{by } \Cref{StRels} (1))
\end{align*}
Using these we get
\begin{align*}
L_{2i+2} &\defeq s_{i}L_{2i}s_{i} - s_{i}L_{2i}e_{2i} - e_{2i}L_{2i}s_{i} + e_{2i}L_{2i}e_{2i+1}e_{2i} + \sigma_{2i+1} \\
&= s_{i}L_{2i}s_{i} - \sigma_{2i+1}e_{2i-1}e_{2i} - e_{2i}e_{2i-1}\sigma_{2i+1} + e_{2i}e_{2i+1}\sigma_{2i+1}e_{2i+1}e_{2i} + \sigma_{2i+1}.
\end{align*}
Once again replacing $L_{2i+2}$ with $N_{2i+2}$ and $L_{2i}$ with $N_{2i}$ gives a valid equality.

\end{proof}

The next lemma tells us how the normalised JM-elements interact with the odd indexed generators $\sigma_{i}$, when they do not commute.

\begin{lem} \label{OddSigSkeinRels}
The following relations hold:
\begin{itemize}
\item[(1)] $N_{2i+2} = \sigma_{2i+1}N_{2i}\sigma_{2i+1} - e_{2i+1}e_{2i} - e_{2i}e_{2i+1} + e_{2i}e_{2i+1}\sigma_{2i+1}e_{2i+1}e_{2i} + \sigma_{2i+1}$.
\item[(2)] $N_{2i+1} = \sigma_{2i+1}N_{2i+1}\sigma_{2i+1} + e_{2i+1}e_{2i} + e_{2i}e_{2i+1} - \sigma_{2i+1}e_{2i+1}e_{2i} -e_{2i}e_{2i+1}\sigma_{2i+1}$.
\end{itemize}
\end{lem}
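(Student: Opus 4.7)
The strategy is to prove both identities by computing commutators of the relevant JM-element with $\sigma_{2i+1}$ (for part $(2)$) or reducing to an auxiliary commutator identity for $\sigma_{2i}$ (for part $(1)$), expanding via the recursive definitions in \emph{\Cref{EnyJMDef}} and using the relations collected in \emph{\Cref{EnyRels}}. Since $N_{r} = L_{r}-\delta/2$, any identity of the form $N = \sigma N'\sigma + R$ with $\sigma$ an involution is equivalent to the identity with $L$ in place of $N$, so I work with the $L_{r}$ throughout. I will use repeatedly that $\sigma_{2i}\sigma_{2i+1} = \sigma_{2i+1}\sigma_{2i} = s_{i}$ (so $\sigma_{2i}$ and $\sigma_{2i+1}$ commute), together with the absorption relations $\sigma_{2j+1}e_{2j} = e_{2j}\sigma_{2j+1} = e_{2j}$ and $\sigma_{2j}e_{2j} = e_{2j}\sigma_{2j} = e_{2j}$ from \emph{\Cref{EnyRels}} $(1)(v)$--$(1)(vi)$.

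For part $(2)$, I compute $[L_{2i+1},\sigma_{2i+1}] = L_{2i+1}\sigma_{2i+1} - \sigma_{2i+1}L_{2i+1}$ directly by expanding the recursive formula for $L_{2i+1}$. The identities $s_{i}\sigma_{2i+1} = \sigma_{2i+1}s_{i} = \sigma_{2i}$ convert the contribution of the $s_{i}L_{2i-1}s_{i}$ term into a difference $s_{i}L_{2i-1}\sigma_{2i} - \sigma_{2i}L_{2i-1}s_{i}$, and this difference vanishes because $L_{2i-1} \in \mathcal{A}_{2i-1}(\delta)$ commutes with $\sigma_{2i+1}$ by \emph{\Cref{EnyRels}} $(1)(iv)$. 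The remaining contributions, involving $L_{2i}e_{2i}$, $e_{2i}L_{2i}$, $L_{2i-1}e_{2i}$ and $\sigma_{2i}$, simplify via the Mixed Relations $(3)(iv)$ in the forms $L_{2i}e_{2i} = \sigma_{2i+1}e_{2i+1}e_{2i}$ and $e_{2i}L_{2i} = e_{2i}e_{2i+1}\sigma_{2i+1}$, and assemble into exactly $R\sigma_{2i+1}$ where $R$ is the right hand side of $(2)$. Right multiplying by $\sigma_{2i+1}$ and using $\sigma_{2i+1}^{2} = 1$ yields the stated identity.

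For part $(1)$, I reduce via \emph{\Cref{CoxSkeinRels}} $(2)$. Since $\sigma_{2i+1}$ commutes with both $e_{2i-1}\in\mathcal{A}_{2i-1}(\delta)$ and $e_{2i}$, the terms $\sigma_{2i+1}e_{2i-1}e_{2i}$ and $e_{2i}e_{2i-1}\sigma_{2i+1}$ in that relation collapse to $e_{2i-1}e_{2i}$ and $e_{2i}e_{2i-1}$. Writing $s_{i}=\sigma_{2i+1}\sigma_{2i}$, so that $s_{i}L_{2i}s_{i} = \sigma_{2i+1}(\sigma_{2i}L_{2i}\sigma_{2i})\sigma_{2i+1}$, and using \emph{\Cref{EnyRels}} $(3)(iv)$ to rewrite $\sigma_{2i+1}e_{2i+1}e_{2i}\sigma_{2i+1} = L_{2i}e_{2i}$ and $\sigma_{2i+1}e_{2i}e_{2i+1}\sigma_{2i+1} = e_{2i}L_{2i}$, the claim of $(1)$ becomes equivalent to the auxiliary identity
\[ \sigma_{2i}L_{2i}\sigma_{2i} - L_{2i} = e_{2i-1}e_{2i} + e_{2i}e_{2i-1} - L_{2i}e_{2i} - e_{2i}L_{2i}. \]
I would then prove this auxiliary identity by the same commutator approach as in $(2)$: expand $L_{2i}$ via \emph{\Cref{EnyJMDef}}, use that $\sigma_{2i}$ commutes with $L_{2i-2}\in\mathcal{A}_{2i-2}(\delta)$ by \emph{\Cref{EnyRels}} $(1)(iv)$, and absorb the corrections using Mixed Relation $(3)(iii)$ in the forms $L_{2i}e_{2i} = \sigma_{2i}e_{2i-1}e_{2i}$ and $e_{2i}L_{2i} = e_{2i}e_{2i-1}\sigma_{2i}$.

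The main obstacle is this auxiliary identity. In contrast with $(2)$, where the lower JM-element $L_{2i-1}$ appears alone in the recursive formula so that its centrality makes the relevant inner commutator trivial, in the recursive expression for $L_{2i}$ the element $L_{2i-2}$ is sandwiched between the non-commuting factors $s_{i-1}$, $e_{2i-2}$, $e_{2i-1}$, and there is an extra summand $\sigma_{2i-1}$ which does not commute with $\sigma_{2i}$. Consequently one must carefully track the interactions of $\sigma_{2i}$ with $s_{i-1}$, $e_{2i-2}$, $e_{2i-1}$, and $\sigma_{2i-1}$ using the Enyang relations, and verify the term-by-term cancellation in the commutator using $(3)(iii)$ together with the absorption relations $\sigma_{2i}e_{2i} = e_{2i}$. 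The computation is lengthy but mechanical once the guiding principle of ``push $\sigma_{2i}$ past $L_{2i-2}$ and absorb the remainder via $(3)(iii)$'' is in hand.
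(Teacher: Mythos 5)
Your part $(2)$ is correct, and it takes a genuinely different route from the paper: the paper first proves $(1)$ and then derives $(2)$ by conjugating the central sum $N_{2i}+N_{2i+1}+N_{2i+2}$ by $\sigma_{2i+1}$ and substituting $(1)$ back in, whereas you compute $[L_{2i+1},\sigma_{2i+1}]$ directly from \emph{\Cref{EnyJMDef}}. I checked your cancellation: the $s_{i}L_{2i-1}s_{i}$ term contributes nothing since $\sigma_{2i+1}$ centralises $\mathcal{A}_{2i-1}(\delta)$ and $s_{i}\sigma_{2i+1}=\sigma_{2i+1}s_{i}=\sigma_{2i}$; the $(\delta-L_{2i-1})e_{2i}$ and $\sigma_{2i}$ terms commute with $\sigma_{2i+1}$; and the $L_{2i}e_{2i}$, $e_{2i}L_{2i}$ terms produce exactly the four correction terms via \emph{\Cref{EnyRels}} $(3)(iv)$ and the absorption $\sigma_{2i+1}e_{2i}=e_{2i}\sigma_{2i+1}=e_{2i}$. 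This is a perfectly good self-contained proof of $(2)$.

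Part $(1)$, however, has a genuine gap in two places. First, your reduction rests on the claim that $\sigma_{2i+1}$ commutes with $e_{2i-1}$ ``since $e_{2i-1}\in\mathcal{A}_{2i-1}(\delta)$''. This is false: the top $e$-generator of $\mathcal{A}_{m}(\delta)$ is $e_{m-1}$, so $e_{2i-1}$ lies in $\mathcal{A}_{2i}(\delta)$ but not in $\mathcal{A}_{2i-1}(\delta)$, and already $\sigma_{3}=s_{1}$ fails to commute with $e_{1}$. Hence $\sigma_{2i+1}e_{2i-1}e_{2i}$ does not collapse to $e_{2i-1}e_{2i}$ (it equals $s_{i}L_{2i}e_{2i}$ by \emph{\Cref{EnyRels}} $(3)(iii)$). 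Your auxiliary identity $\sigma_{2i}L_{2i}\sigma_{2i}-L_{2i}=e_{2i-1}e_{2i}+e_{2i}e_{2i-1}-L_{2i}e_{2i}-e_{2i}L_{2i}$ does happen to be a correct reformulation of $(1)$ — one obtains it by conjugating the difference of $(1)$ and \emph{\Cref{CoxSkeinRels}} $(2)$ by $\sigma_{2i+1}$, using only $e_{2i}\sigma_{2i+1}=\sigma_{2i+1}e_{2i}=e_{2i}$ and $(3)(iv)$ — but your derivation of it is invalid as written. Second, and more seriously, you never prove this auxiliary identity; you only assert it is ``mechanical''. Expanding $L_{2i}$ via \emph{\Cref{EnyJMDef}} forces you to move $\sigma_{2i}$ past $s_{i-1}$, $e_{2i-2}$, $e_{2i-1}$ and $\sigma_{2i-1}$, none of which lie in $\mathcal{A}_{2i-2}(\delta)$, and none of the required interchange relations appear in \emph{\Cref{EnyRels}}; your guiding principle of ``push $\sigma_{2i}$ past $L_{2i-2}$'' does not touch these terms. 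The paper's proof of $(1)$ avoids the whole issue: conjugate \emph{\Cref{CoxSkeinRels}} $(2)$ by $\sigma_{2i}$. The left-hand side is unchanged because $\sigma_{2i}$ commutes with $N_{2i+2}$ (\emph{\Cref{CommutingRels}} $(4)$), the conjugation turns $s_{i}$ into $\sigma_{2i+1}$ via $\sigma_{2i}s_{i}=s_{i}\sigma_{2i}=\sigma_{2i+1}$, every stray $\sigma_{2i}$ is absorbed by an adjacent $e_{2i}$, and the surviving terms $s_{i}e_{2i-1}e_{2i}$ and $e_{2i}e_{2i-1}s_{i}$ become $e_{2i+1}e_{2i}$ and $e_{2i}e_{2i+1}$ by \emph{\Cref{StRels}}. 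You should replace your argument for $(1)$ with this one-step conjugation.
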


\begin{proof}
(1): From \emph{\Cref{CoxSkeinRels}} (2) we  have
\[ N_{2i+2} = s_{i}N_{2i}s_{i} - \sigma_{2i+1}e_{2i-1}e_{2i} - e_{2i}e_{2i-1}\sigma_{2i+1} + e_{2i}e_{2i+1}\sigma_{2i+1}e_{2i+1}e_{2i} + \sigma_{2i+1}. \]
Multiplying this equality on the left and right by $\sigma_{2i}$ gives
\begin{align*} 
\sigma_{2i}N_{2i+2}\sigma_{2i} = \sigma_{2i}s_{i}N_{2i}s_{i}\sigma_{2i} - \sigma_{2i}\sigma_{2i+1}&e_{2i-1}e_{2i}\sigma_{2i} - \sigma_{2i}e_{2i}e_{2i-1}\sigma_{2i+1}\sigma_{2i} \\ 
&+ \sigma_{2i}e_{2i}e_{2i+1}\sigma_{2i+1}e_{2i+1}e_{2i}\sigma_{2i} + \sigma_{2i}\sigma_{2i+1}\sigma_{2i}.
\end{align*}
By \emph{\Cref{CommutingRels}} (4) we know that $\sigma_{2i}$ commute with $N_{2i+2}$, and by \emph{\Cref{EnyRels}} (1)(ii) $\sigma_{2i}^{2}=1$, hence the left hand side of above becomes $\sigma_{2i}N_{2i+2}\sigma_{2i} = N_{2i+2}$. \emph{\Cref{EnyRels}} (1)(iii) and (1)(v) tell us that $s_{i}=\sigma_{2i}\sigma_{2i+1}=\sigma_{2i+1}\sigma_{2i}$ and $\sigma_{2i}e_{2i} = e_{2i}\sigma_{2i}=e_{2i}$. From this we see that the above reduces to
\[ N_{2i+2} = \sigma_{2i+1}N_{2i}\sigma_{2i+1} - s_{i}e_{2i-1}e_{2i} - e_{2i}e_{2i-1}s_{i} + e_{2i}e_{2i+1}\sigma_{2i+1}e_{2i+1}e_{2i} + \sigma_{2i+1}. \]
Lastly, by \emph{\Cref{StRels}} we have that $s_{i}e_{2i-1}e_{2i} = e_{2i+1}s_{i}e_{2i} = e_{2i+1}e_{2i}$, and similarly $e_{2i}e_{2i-1}s_{i} = e_{2i}e_{2i+1}$. Hence the above equation reduces to the desired one.

(2): The sum $\sum_{i=1}^{r}N_{i}$ is central in $\mathcal{A}_{r}(\delta)$ by \emph{\Cref{EnyRels}} (2)(iii), and by \emph{\Cref{CommutingRels}} (3) we know that $\sigma_{2i+1}N_{j} = N_{j}\sigma_{2i+1}$ for all $j \neq 2i, 2i+1, 2i+1$. From these two facts we have that
\[ \sigma_{2i+1}(N_{2i} + N_{2i+1} + N_{2i+2})\sigma_{2i+1} = N_{2i} + N_{2i+1} + N_{2i+2}. \]
Rearranging we have
\[ N_{2i+1} = \sigma_{2i+1}N_{2i+1}\sigma_{2i+1} + (\sigma_{2i+1}N_{2i}\sigma_{2i+1} - N_{2i+2}) + (\sigma_{2i+1}N_{2i+2}\sigma_{2i+1} - N_{2i}). \tag{Eq1} \]
We will now focus on the bracketed terms in (Eq1). For the first bracketed term, (1) tells us that
\begin{align*}
\sigma_{2i+1}N_{2i}\sigma_{2i+1} - N_{2i+2} &= (N_{2i+2} + e_{2i+1}e_{2i} + e_{2i}e_{2i+1} - e_{2i}e_{2i+1}\sigma_{2i+1}e_{2i+1}e_{2i} - \sigma_{2i+1}) - N_{2i+2} \\
&= e_{2i+1}e_{2i} + e_{2i}e_{2i+1} - e_{2i}e_{2i+1}\sigma_{2i+1}e_{2i+1}e_{2i} - \sigma_{2i+1}
\end{align*}
For the second bracketed term, multiplying the equality from (1) by $\sigma_{2i+1}$ on both the left and right hand sides, and then rearranging gives
\begin{align*}
\sigma_{2i+1}N_{2i+2}\sigma_{2i+1} - N_{2i} &= (N_{2i} - \sigma_{2i+1}e_{2i+1}e_{2i} -e_{2i}e_{2i+1}\sigma_{2i+1} + e_{2i}e_{2i+1}\sigma_{2i+1}e_{2i+1}e_{2i} + \sigma_{2i+1} ) - N_{2i} \\
&= - \sigma_{2i+1}e_{2i+1}e_{2i} -e_{2i}e_{2i+1}\sigma_{2i+1} + e_{2i}e_{2i+1}\sigma_{2i+1}e_{2i+1}e_{2i} + \sigma_{2i+1}.
\end{align*}
Therefore the sum of the two bracketed terms is
\begin{align*}
(\sigma_{2i+1}N_{2i}\sigma_{2i+1} - N_{2i+2}) + (\sigma_{2i+1}&N_{2i+2}\sigma_{2i+1} - N_{2i}) = \\
&e_{2i+1}e_{2i} + e_{2i}e_{2i+1} - \sigma_{2i+1}e_{2i+1}e_{2i} -e_{2i}e_{2i+1}\sigma_{2i+1}.
\end{align*}
Hence (Eq1) becomes
\[ N_{2i+1} = \sigma_{2i+1}N_{2i+1}\sigma_{2i+1} + e_{2i+1}e_{2i} + e_{2i}e_{2i+1} - \sigma_{2i+1}e_{2i+1}e_{2i} -e_{2i}e_{2i+1}\sigma_{2i+1}. \]

\end{proof}

For our proposes, we do not need an analogous result regarding the even indexed generators $\sigma_{i}$. The next lemma tells us how the generators $e_{i}$ interact with the normalised JM-elements whenever they do not commute.

\begin{lem} \label{AntiSymmRels}
We have the following relations:
\begin{itemize}
\item[(1)] $e_{i}N_{i} = -e_{i}N_{i+1}$
\item[(2)] $N_{i}e_{i} = -N_{i+1}e_{i}$
\end{itemize}
\end{lem}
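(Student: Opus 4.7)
The plan is to read this lemma as a trivial rewriting of Relation (3)(ii) from \Cref{EnyRels}, namely $e_{i}(L_{i}+L_{i+1}) = (L_{i}+L_{i+1})e_{i} = \delta e_{i}$, in terms of the normalised Jucys--Murphy elements $N_{j} := L_{j} - \delta/2$.

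For part (1), I would start from $e_{i}(L_{i}+L_{i+1}) = \delta e_{i}$ and subtract $\delta e_{i}$ from both sides, grouping the constant equally between the two summands:
\[ e_{i}\bigl(L_{i} - \tfrac{\delta}{2}\bigr) + e_{i}\bigl(L_{i+1} - \tfrac{\delta}{2}\bigr) = 0, \]
which is precisely $e_{i}N_{i} + e_{i}N_{i+1} = 0$, yielding $e_{i}N_{i} = -e_{i}N_{i+1}$. Part (2) is obtained identically by starting instead from $(L_{i}+L_{i+1})e_{i} = \delta e_{i}$ (the other half of \Cref{EnyRels} (3)(ii)), or alternatively by applying the anti-automorphism $*$ to the identity in (1), using $e_{i}^{*} = e_{i}$ and $N_{i}^{*} = N_{i}$ (the latter from \Cref{EnyRels} (2)(i)).

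There is no real obstacle here: the entire content of the lemma is the observation that the normalisation $L_{j} \mapsto L_{j} - \delta/2$ converts the inhomogeneous relation $e_{i}(L_{i}+L_{i+1}) = \delta e_{i}$ into the homogeneous (antisymmetric) relation $e_{i}(N_{i}+N_{i+1}) = 0$. This is exactly the point of working with $N_{i}$ rather than $L_{i}$ when studying the centre, and it is the relation that will be used in the next section to show that supersymmetric polynomials in the $N_{i}$ centralise the generators $e_{i}$.
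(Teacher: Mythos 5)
Your proposal is correct and follows essentially the same route as the paper: both deduce $e_{i}(N_{i}+N_{i+1})=0$ directly from \Cref{EnyRels} (3)(ii) together with the normalisation $N_{j}=L_{j}-\delta/2$, and both obtain (2) by applying the anti-automorphism $*$ (the paper uses exactly this, while you also note the equivalent direct route via the other half of the relation). Nothing is missing.
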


\begin{proof}
From \emph{\Cref{EnyRels}} (3)(ii), and the definition that $N_{i}=L_{i}-\delta/2$, we have
\[ e_{i}(N_{i} + N_{i+1}) = -\delta e_{i} + e_{i}(L_{i}+L_{i+1}) = -\delta e_{i} + \delta e_{i} = 0. \]
Hence $e_{i}N_{i} = -e_{i}N_{i+1}$ giving (1). Relation (2) follows by applying the anti-automorphism $*$ to (1).

\end{proof}


\section{Central Subalgebra}

For this section we will show that the subalgebra of supersymmetric polynomials in the normalised JM-elements belongs to the center of the partition algebra. We begin by recalling the definition of the algebra of supersymmetric polynomials and some of its properties. Then we will use the relations given in the previous section to show that the generators of the partition algebra commute with any such polynomial in the normalised JM-elements.

\subsection{Supersymmetric Polynomials} \
\vspace{2mm}

What is covered here can be found in \cite{Stem85} and \cite{Moens07}. We remodel the definitions a little to better align with our situation. Let $r$ be a non-negative integer, then we will denote by $r_{e}$ the largest even integer such that $r_{e}\leq r$, and similarly we denote by $r_{o}$ the largest odd integer such that $r_{o}\leq r$. We also define the sets $E(r) = \{2,4,\dots,r_{e}\}$ and $O(r) = \{1,3,\dots,r_{o}\}$. We set $\mathcal{P}_{r} := \mathbb{C}[x_{1},\dots,x_{r}]$, the algebra of polynomials in $r$ commuting variables. For convention, when $r=0$, we set $r_{e}=r_{o}=0$, $E(r)=O(r)=\emptyset$, and $\mathcal{P}_{0} := \mathbb{C}$.

\begin{defn} \label{SSPDef}
Let $r$ be a non-negative integer and $p \in \mathcal{P}_{r}$. We say that $p$ is supersymmetric if
\begin{itemize}
\item[(1)] $p$ is \emph{parity symmetric}: $p$ is symmetric in $x_{1},x_{3},\dots,x_{r_{o}}$, and symmetric in $x_{2},x_{4},\dots,x_{r_{e}}$. 
\item[(2)] $p$ satisfies the \emph{cancellation property}: substituting $x_{1} = -x_{2} = y$ yields a polynomial in $x_{3},x_{4},\dots,x_{r}$ which is independent of $y$.
\end{itemize}
We will denote by $SS_{r}[x]$ the set of all supersymmetric polynomials in $r$ commuting variables.
\end{defn}

When $r$ is even, then the number of odd index variables agrees with the number of even index variables, while when $r$ is odd, there is one more odd index variable than even. We will often suppress the arguments of a polynomial $p$, but when we want to be clearer about the number of variables in play we will write $p(x_{1},x_{2},\dots,x_{r})$. It is not difficult to see that the two properties in the above definition respect addition and multiplication of polynomials, and so $SS_{r}[x]$ is in fact a subalgebra of $\mathcal{P}_{r}$. For convention we set $SS_{0}[x] = \mathbb{C}$, and we have that $SS_{1}[x] = \mathbb{C}[x]$, the polynomial algebra in one variable $x$.

\begin{rmk} \label{JK17SSPConvRmk}
In \cite{JK17}, they consider supersymmetric polynomials in the two sets of variables $X=\{x_{1},\dots,x_{r}\}$ and $Y=\{y_{1},\dots,y_{s}\}$. That is a supersymmetric polynomial is one which is symmetric in the $X$ variables, symmetric in the $Y$ variables, and satisfies an analogous cancellation property to (2) above. We are working with the specialisation $X = \{x_{1},x_{3},\dots,x_{r_{o}}\}$ and $Y = \{x_{2},x_{4},\dots,x_{r_{e}}\}$.
\end{rmk}

\begin{exs} \label{SSPExs} \

\begin{itemize}
\item[(1)] Let $n,r \in \mathbb{Z}_{\geq 0}$. Then the $n$-th power sum supersymmetric polynomials $q_{n}$ in $r$ commuting variables are given by
\[ q_{n} = x_{1}^{n}+x_{3}^{n}+\dots +x_{r_{o}}^{n} + (-1)^{n+1}\left( x_{2}^{n}+x_{4}^{n}+\dots +x_{r_{e}}^{n} \right).\]
It is immediate that any permutation among the odd indexed variables leaves $q_{n}$ invariant, and similarly for the even indexed variables. The sign $(-1)^{n+1}$ which appears also means that the cancellation property of \emph{\Cref{SSPDef}} is upheld, hence $q_{n} \in SS_{r}[x]$. It was shown in \cite{Stem85} that the algebra of supersymmetric polynomials is generated by all the supersymmetric power-sum polynomials, that is $SS_{r}[x]=\langle q_{n} \hspace{1mm} | \hspace{1mm} n\geq 0 \rangle$. 
\item[(2)] Let $r=4$. Then consider the polynomial
\[ l_{2}(x_{1},x_{2},x_{3},x_{4}) = x_{2}^{2}+x_{2}x_{4}+x_{4}^{2} + (x_{1}+x_{3})(x_{2}+x_{4}) + x_{1}x_{3}. \]
One can see that permuting the variables $x_{2}$ and $x_{4}$ around, or permuting $x_{1}$ and $x_{3}$ around leaves $l_{2}$ unchanged, hence it is parity symmetric. Furthermore, setting $x_{1}=-x_{2}=y$ gives
\begin{align*}
l_{2}(y,-y,x_{3},x_{4}) &= (-y)^{2}-yx_{4}+x_{4}^{2} + (y+x_{3})(-y+x_{4}) + yx_{3} \\
&= y^{2}-yx_{4}+x_{4}^{2}-y^{2}+yx_{4}-yx_{3}+x_{3}x_{4}+yx_{3} \\
&= x_{4}^{2}+x_{3}x_{4}
\end{align*}
which is independent of $y$. Thus $l_{2}$ also satisfies the cancellation property, and so $l_{2} \in SS_{4}[x]$.
\end{itemize}
\end{exs}

We will be particularly interested in what are called the elementary supersymmetric polynomials, of which $l_{2}$ from above is an example. To define these elements we will work in the algebra $\mathcal{P}_{r}[[t]]$ of formal power series in the commuting variable $t$ with coefficients in $\mathcal{P}_{r}$.

\begin{defn} \label{EleSSPDef}
Let $n,r\geq 0$ be non-negative integers. The elementary supersymmetric polynomials $l_{n}$ in $r$ commuting variables are defined to be the coefficients in the generating function
\[ \sum_{n=0}^{\infty}l_{n}t^{n} = \frac{\prod_{i \in O(r)}(1+x_{i}t)}{\prod_{j \in E(r)}(1-x_{j}t)} = \frac{(1+x_{1}t)(1+x_{3}t)\dots(1+x_{r_{o}}t)}{(1-x_{2}t)(1-x_{4}t)\dots(1-x_{r_{e}}t)}. \]
\end{defn}

The properties of supersymmetry are immediately seen to be upheld from the definition of $l_{n}$, showing that $l_{n} \in SS_{r}[x]$. Noting that $|E(r)| = \floor{r/2}$ and $|O(r)| = \ceil{r/2}$, where $\floor{-}$ and $\ceil{-}$ are the floor and ceiling functions respectively, then we may alternatively write
\[ \sum_{n=0}^{\infty}l_{n}t^{n} = \frac{\prod_{i=1}^{\ceil{r/2}}(1+x_{2i-1}t)}{\prod_{j=1}^{\floor{r/2}}(1-x_{2j}t)}. \]

\begin{ex} \label{EleSSPEx}
We will give a general expression for all $l_{n} \in SS_{4}[x]$. We have that
\begin{align*}
\sum_{n=0}^{\infty}l_{n}t^{n} &= \frac{(1+x_{1}t)(1+x_{3}t)}{(1-x_{2}t)(1-x_{4}t)} = (1+(x_{1}+x_{3})t+x_{1}x_{3}t^{2})\left(\frac{1}{1-x_{2}t}\right)\left(\frac{1}{1-x_{4}t}\right) \\
&= (1+(x_{1}+x_{3})t+x_{1}x_{3}t^{2})\left( \sum_{n=0}^{\infty}x_{2}^{n}t^{n} \right)\left( \sum_{n=0}^{\infty}x_{4}^{n}t^{n} \right) \\
&= (1+(x_{1}+x_{3})t+x_{1}x_{3}t^{2})\sum_{n=0}^{\infty}\left( \sum\limits_{\substack{a+b=n \\ a,b \geq 0}} x_{2}^{a}x_{4}^{b} \right)t^{n} \\
&= \sum_{n=0}^{\infty} \left[ \sum\limits_{\substack{a+b=n \\ a,b \geq 0}} x_{2}^{a}x_{4}^{b} +\left( (x_{1}+x_{3})\sum\limits_{\substack{a+b=n-1 \\ a,b \geq 0}} x_{2}^{a}x_{4}^{b} \right) + \left( x_{1}x_{3}\sum\limits_{\substack{a+b=n-2 \\ a,b \geq 0}} x_{2}^{a}x_{4}^{b} \right) \right]t^{n}.
\end{align*}
Therefore we have
\[ l_{n} = \sum\limits_{\substack{a+b=n \\ a,b \geq 0}} x_{2}^{a}x_{4}^{b} + \left( (x_{1}+x_{3})\sum\limits_{\substack{a+b=n-1 \\ a,b \geq 0}} x_{2}^{a}x_{4}^{b} \right) + \left( x_{1}x_{3}\sum\limits_{\substack{a+b=n-2 \\ a,b \geq 0}} x_{2}^{a}x_{4}^{b} \right). \]
The first four cases are $l_{0}=1$ and
\begin{align*}
l_{1} &= x_{1}+x_{2}+x_{3}+x_{4}, \\
l_{2} &= (x_{2}^{2} + x_{2}x_{4} + x_{4}^{2}) + (x_{1}+x_{3})(x_{2}+x_{4}) + x_{1}x_{3}, \\
l_{3} &= (x_{2}^{3} + x_{2}^{2}x_{4} + x_{2}x_{4}^{2} + x_{4}^{3}) + (x_{1}+x_{3})(x_{2}^{2} + x_{2}x_{4} + x_{4}^{2}) + x_{1}x_{3}(x_{2}+x_{4}).
\end{align*} 
In particular, the polynomial $l_{2}$ is precisely the one given in \emph{\Cref{SSPExs}} (2).
\end{ex}

The polynomials $l_{n}$ are the supersymmetric counterparts to the regular elementary symmetric polynomials. From the above example, one can observe that in $r$ commuting variables $l_{1} = x_{1}+x_{2}+\dots+x_{r}$, and so by \emph{\Cref{EnyRels}} (2) (iii), the element $l_{1}(N_{1},\dots,N_{r})$ is central in $\mathcal{A}_{r}(\delta)$. Unlike the regular elementary symmetric polynomials, when $r\geq 2$, we have that $l_{n} \neq 0$ for all $n>0$. In \cite{Stem85}, the following was proven:

\begin{thm}[\emph{Theorem 2; Corollary of \cite{Stem85}}] \label{StemSSPGens} \
The elementary supersymmetric polynomials in $r$ commuting variables generate $SS_{r}[x]$. That is 
\[SS_{r}[x] = \langle l_{n} \hspace{1mm} | \hspace{1mm} n\geq 0 \rangle. \]
\end{thm}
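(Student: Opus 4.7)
The plan is to use \emph{\Cref{SSPExs}}(1), which tells us that the power-sum supersymmetric polynomials $q_n$ already generate $SS_r[x]$; it therefore suffices to show that each $q_n$ lies in the subalgebra of $\mathcal{P}_r$ generated by $\{l_m : m \geq 0\}$. The reverse inclusion is automatic since each $l_n$ is supersymmetric by construction. I would establish the required containment via a Newton-type identity obtained by taking the logarithmic derivative of the generating function $L(t) := \sum_{n \geq 0} l_n t^n$ from \emph{\Cref{EleSSPDef}}.

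A direct computation gives
\[
\frac{L'(t)}{L(t)} \;=\; \sum_{i \in O(r)} \frac{x_i}{1+x_i t} \;+\; \sum_{j \in E(r)} \frac{x_j}{1-x_j t}.
\]
Expanding each summand as a geometric series in $t$, the coefficient of $t^{m-1}$ works out to $(-1)^{m-1}\sum_{i \in O(r)} x_i^m + \sum_{j \in E(r)} x_j^m$, which equals $(-1)^{m-1}q_m$ after comparison with the defining formula of $q_m$ in \emph{\Cref{SSPExs}}(1). Hence
\[
L'(t) \;=\; L(t)\cdot \sum_{m \geq 1}(-1)^{m-1} q_m\, t^{m-1},
\]
and comparing coefficients of $t^{n-1}$ on both sides yields the Newton-type recursion
\[
n\, l_n \;=\; \sum_{m=1}^{n}(-1)^{m-1}\, q_m\, l_{n-m}.
\]

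Since $l_0 = 1$, isolating the $m=n$ term on the right rewrites this as
\[
q_n \;=\; n\, l_n \;-\; \sum_{m=1}^{n-1}(-1)^{m-1}\, q_m\, l_{n-m},
\]
and a straightforward induction on $n$, with base case $q_1 = l_1$, then expresses each $q_n$ as a polynomial with integer coefficients in $l_1, \dots, l_n$. Combined with \emph{\Cref{SSPExs}}(1) this gives $SS_r[x] \subseteq \langle l_n : n \geq 0 \rangle$ and hence equality. The only step requiring genuine care is the sign bookkeeping in the coefficient extraction: the expansion of $x_i/(1+x_i t)$ contributes an alternating sign $(-1)^n$ in the coefficient of $t^n$ while $x_j/(1-x_j t)$ does not, and one must verify that after reindexing $n \mapsto m-1$ these signs align with the convention $q_m = \sum_{i \in O(r)} x_i^m + (-1)^{m+1}\sum_{j \in E(r)} x_j^m$. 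Everything else is routine manipulation of formal power series, and notably the cancellation property from \emph{\Cref{SSPDef}}(2) plays no role in this direction of the argument.
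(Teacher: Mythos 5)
Your argument is correct in substance, but note that the paper does not actually prove this statement: it is imported verbatim as a corollary of Theorem 2 of \cite{Stem85}, so there is no internal proof to compare against. What you have done is reduce the elementary-generation statement to the power-sum-generation statement of \emph{\Cref{SSPExs}} (1) --- which is itself only cited from \cite{Stem85} --- via the logarithmic derivative of the generating function $L(t)$. That reduction is standard and your coefficient extraction checks out: the coefficient of $t^{m-1}$ in $L'(t)/L(t)$ is indeed $(-1)^{m-1}q_m$, since $(-1)^{m-1}(-1)^{m+1}=1$ restores the plain sign on the even-indexed variables, and comparing coefficients gives $n\,l_n=\sum_{m=1}^{n}(-1)^{m-1}q_m\,l_{n-m}$. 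The one slip is in the final rearrangement: isolating the $m=n$ term yields $(-1)^{n-1}q_n = n\,l_n-\sum_{m=1}^{n-1}(-1)^{m-1}q_m\,l_{n-m}$, so your displayed formula for $q_n$ is missing a global factor of $(-1)^{n-1}$; this is harmless, as the induction still expresses each $q_n$ as an integer polynomial in $l_1,\dots,l_n$ (base case $q_1=l_1$ is consistent with \emph{\Cref{EleSSPEx}}). Your closing observation is also accurate and worth keeping: the cancellation property carries all the genuine content of Stembridge's theorem and is hidden entirely in the cited fact that the $q_n$ generate $SS_r[x]$; your computation merely converts one generating set into another, which is exactly the right division of labour given what the paper takes as known.
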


\subsection{ The elements $l_{n}(N_{1},\dots,N_{r})$ are central in $\mathcal{A}_{r}(\delta)$} \ 
\vspace{2mm}

Let $N_{1},N_{2},\dots,N_{r}$ be the normalised JM-elements of $\mathcal{A}_{r}(\delta)$ as described in \emph{Section 2.2}. We know that $N_{1},N_{2},\dots,N_{r}$ pairwise commute, which means there exists an algebra homomorphism $\mathcal{P}_{r} \rightarrow \mathcal{A}_{r}(\delta)$ given by $x_{i} \mapsto N_{i}$ for each $1 \leq i \leq r$. Restricting to the subalgebra $SS_{r}[x]$ yields an algebra homomorphism $SS_{r}[x] \rightarrow \mathcal{A}_{r}(\delta)$. We denote the image of this homomorphism by $SS_{r}[N_{1},\dots,N_{r}]$. In this section we will show that the subalgebra $SS_{r}[N_{1},\dots,N_{r}]$ belongs to the center of $\mathcal{A}_{r}(\delta)$. By definition,  the elements $l_{n}(N_{1},\dots,N_{r})$ in $SS_{r}[N_{1},\dots,N_{r}]$ are given by the generating function
\[ \sum_{n=0}^{\infty}l_{n}(N_{1},\dots,N_{r})t^{n} = \frac{\prod_{i=1}^{\ceil{r/2}}(1+N_{2i-1}t)}{\prod_{j=1}^{\floor{r/2}}(1-N_{2j}t)} \in \mathcal{A}_{r}(\delta)[[t]], \]
and they generate all of $SS_{r}[N_{1},\dots,N_{r}]$ by \emph{\Cref{StemSSPGens}}. Also recall that
\begin{align*}
\mathcal{A}_{r}(\delta) &= \langle e_{1},e_{2},\dots,e_{r-1},\sigma_{2},\sigma_{3},\dots,\sigma_{r-1} \rangle \\
&= \langle e_{1},e_{2},\dots,e_{r-1},s_{1},s_{2},\dots,s_{\floor{r/2}-1} \rangle.
\end{align*}
We will prove that $SS_{r}[N_{1},\dots,N_{r}] \subseteq Z(\mathcal{A}_{r}(\delta))$ directly by showing that each generator of $\mathcal{A}_{r}(\delta)$ commutes with $l_{n}(N_{1},\dots,N_{r})$, for any $n \geq 0$. This will be done by using the above generating function and the various relations in $\mathcal{A}_{r}(\delta)$ that we established previously. Hence we will be working within $\mathcal{A}_{r}(\delta)[[t]]$, the algebra of formal power series in the commuting variable $t$ with coefficients in $\mathcal{A}_{r}(\delta)$. We begin with the generators $e_{i}$.

\begin{lem} \label{EiCommute}
In $\mathcal{A}_{r}(\delta)$, we have that
\[ e_{i}l_{n}(N_{1},\dots,N_{r})=l_{n}(N_{1},\dots,N_{r})e_{i} \]
for all $1\leq i \leq r-1$ and $n\geq 0$.
\end{lem}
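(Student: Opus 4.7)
The plan is to establish the stronger generating-function identity
\[ e_i\,L(t) = L(t)\,e_i \quad \text{in } \mathcal{A}_r(\delta)[[t]], \]
where $L(t) := \sum_{n\geq 0} l_n(N_1,\dots,N_r)\,t^n$ is the generating series from \Cref{EleSSPDef}, and then extract the coefficient of $t^n$ to recover the statement for each $n \geq 0$.

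The first step is to use \Cref{CommutingRels} (1) together with the pairwise commutativity of the normalised JM-elements (\Cref{EnyRels} (2)(ii)) to factor $L(t) = A(t)\,F_i(t)$, where $A(t) \in \mathcal{A}_r(\delta)[[t]]$ collects those factors of the numerator and denominator that do not involve $N_i$ or $N_{i+1}$, while $F_i(t)$ collects the remaining two factors. Since $\{i,i+1\}$ contains exactly one odd and one even integer, $F_i(t)$ always consists of a single numerator factor $(1+N_j t)$ (with $j \in \{i,i+1\}$ odd) and a single denominator factor $(1-N_k t)^{-1}$ (with $k \in \{i,i+1\}$ even). By construction $A(t)$ commutes with $e_i$ term-by-term, so the problem reduces to showing $e_i F_i(t) = F_i(t)\,e_i$.

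The key input is \Cref{AntiSymmRels}, which rearranges as
\[ e_i(1+N_it) = e_i(1-N_{i+1}t), \qquad (1+N_it)e_i = (1-N_{i+1}t)e_i. \]
Applying the first identity allows one to push $e_i$ past the numerator factor of $F_i(t)$ (in both parity cases for $i$), converting it into a copy of the denominator factor, which then cancels against $(1-N_k t)^{-1}$; this yields $e_i F_i(t) = e_i$. A parallel computation using the second identity, together with the commutativity of $N_i$ and $N_{i+1}$ to re-order the two factors of $F_i(t)$ as required, yields $F_i(t)\,e_i = e_i$. Combining these with $A(t)e_i = e_i A(t)$ finishes the argument.

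There is no serious obstacle to this strategy; the proof is essentially a bookkeeping exercise once the generating-function framework is set up. The \emph{conceptual} content is that placing odd-indexed $N$'s in the numerator and even-indexed ones in the denominator is exactly the right match for the sign in $e_i N_i = -e_i N_{i+1}$: the two factors of $F_i(t)$ relevant to $e_i$ are engineered so as to cancel under this relation, which is the structural reason why supersymmetric polynomials, rather than ordinary symmetric polynomials, form the natural family of central elements in $\mathcal{A}_r(\delta)$.
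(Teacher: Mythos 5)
Your proposal is correct and is essentially the paper's own argument: both reduce, via \emph{\Cref{CommutingRels}} (1), to the two factors involving $N_i$ and $N_{i+1}$, and then use \emph{\Cref{AntiSymmRels}} to show that $e_i$ times that sub-fraction (on either side) collapses to $e_i$. The only cosmetic difference is that you treat both parities of $i$ uniformly, whereas the paper writes out the $e_{2i}$ case and notes that $e_{2i-1}$ is identical.
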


\begin{proof}
We prove this for $e_{2i}$. The case for $e_{2i-1}$ follows in the same manner. From \emph{\Cref{CommutingRels}} (1) we know that $e_{2i}N_{j}=N_{j}e_{2i}$ for all $j \neq 2i,2i+1$. Hence it suffices to show that
\[ e_{2i}\frac{1+N_{2i+1}t}{1-N_{2i}t} = \frac{1+N_{2i+1}t}{1-N_{2i}t}e_{2i}. \]
By \emph{\Cref{AntiSymmRels}} we know that $e_{2i}N_{2i}=-e_{2i}N_{2i+1}$ and $N_{2i}e_{2i}=-N_{2i+1}e_{2i}$. Therefore we have that $e_{2i}(1+N_{2i+1}t)=e_{2i}(1-N_{2i}t)$ and $(1+N_{2i+1}t)e_{2i}=(1-N_{2i}t)e_{2i}$. Thus the above equality holds since both sides are equal to $e_{2i}$.

\end{proof}

We now wish to show that $s_{i}l_{n}(N_{1},\dots,N_{r})=l_{n}(N_{1},\dots,N_{r})s_{i}$. However, it turns out that we will need to first establishing that $l_{n}(N_{1},\dots,N_{r})$ commutes with $\sigma_{2i+1}$.

\begin{prop} \label{OddSigCommute}
In $\mathcal{A}_{r}(\delta)$ we have that
\[ \sigma_{2i+1}l_{n}(N_{1},\dots,N_{r}) = l_{n}(N_{1},\dots,N_{r})\sigma_{2i+1}, \]
for any $n\geq 0$ and $1\leq 2i+1 \leq r-1$. 
\end{prop}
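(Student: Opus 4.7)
The plan is to prove commutation at the level of the generating function, so that all the coefficients $l_n(N_1,\ldots,N_r)$ follow at once. By \Cref{CommutingRels}(3), $\sigma_{2i+1}$ commutes with $N_j$ for every $j\notin\{2i,2i+1,2i+2\}$, so every factor of the generating function except the three involving $N_{2i}$, $N_{2i+1}$, and $N_{2i+2}$ commutes with $\sigma_{2i+1}$. Because those three normalised JM-elements pairwise commute (\Cref{EnyRels}(2)(ii)), I can clear denominators and reduce the problem to verifying the polynomial identity
\[
(1-N_{2i}t)(1-N_{2i+2}t)\,\sigma_{2i+1}\,(1+N_{2i+1}t) \;=\; (1+N_{2i+1}t)\,\sigma_{2i+1}\,(1-N_{2i}t)(1-N_{2i+2}t)
\]
in $\mathcal{A}_r(\delta)[t]$.

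Writing $A=1-N_{2i}t$, $B=1+N_{2i+1}t$, $C=1-N_{2i+2}t$, and setting $\alpha=\sigma_{2i+1}N_{2i}\sigma_{2i+1}-N_{2i+2}$ and $\beta=\sigma_{2i+1}N_{2i+1}\sigma_{2i+1}-N_{2i+1}$ (whose explicit expressions are read off from \Cref{OddSigSkeinRels}), the skein relations supply move-past rules such as $\sigma_{2i+1}A=C\sigma_{2i+1}-\alpha\sigma_{2i+1}t$, $C\sigma_{2i+1}=\sigma_{2i+1}A+\alpha\sigma_{2i+1}t$, and $\sigma_{2i+1}B=B\sigma_{2i+1}+\beta\sigma_{2i+1}t$. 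Substituting these into $AC\sigma_{2i+1}B-B\sigma_{2i+1}AC$ and using that $A,B,C$ pairwise commute, the leading $\sigma_{2i+1}(ACB-BAC)$ piece cancels, and the identity collapses to the assertion
\[
(A\alpha\sigma_{2i+1}-\sigma_{2i+1}\alpha A)\,B \;+\; \beta\sigma_{2i+1}\,AC \;=\; 0
\]
in $\mathcal{A}_r(\delta)[t]$, a polynomial of degree $2$ in $t$.

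I would verify this coefficient by coefficient in $t$. The $t^0$ coefficient is $\alpha\sigma_{2i+1}-\sigma_{2i+1}\alpha+\beta\sigma_{2i+1}$; unwinding the definitions of $\alpha$ and $\beta$ gives $[\sigma_{2i+1},\,N_{2i}+N_{2i+1}+N_{2i+2}]$, which vanishes because $\sum_{j}N_{j}$ is central by \Cref{EnyRels}(2)(iii) while $\sigma_{2i+1}$ commutes with each $N_j$ for $j\notin\{2i,2i+1,2i+2\}$ by \Cref{CommutingRels}(3). The main obstacle will be the $t^1$ and $t^2$ coefficients, where the products $N_{2i}\alpha\sigma_{2i+1}$ and $\sigma_{2i+1}\alpha N_{2i}$ force one to push $N_{2i}$ past the awkward summand $e_{2i}e_{2i+1}\sigma_{2i+1}e_{2i+1}e_{2i}$ buried inside $\alpha$. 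Here I expect to need \Cref{AntiSymmRels} repeatedly to convert $N_{2i}e_{2i}$ into $-N_{2i+1}e_{2i}$ (and its mirror), the absorption rule \Cref{EnyRels}(1)(vi), and the mixed relations \Cref{EnyRels}(3)(iii)--(iv) to rewrite sandwiched $\sigma_{2i+1}$'s inside chains of $e_{2i}$'s and $e_{2i+1}$'s as $L_{2i}$'s, before the desired cancellation becomes manifest.
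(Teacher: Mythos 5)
Your reduction is valid and is organised differently from the paper's proof: the paper keeps the rational generating function $\tfrac{1+N_{2i+1}t}{(1-N_{2i}t)(1-N_{2i+2}t)}$ intact, threads $\sigma_{2i+1}$ through one factor at a time, and then pairs off twelve error terms $C_{j},D_{j},E_{j}$ into six cancelling pairs, each cancellation needing \emph{\Cref{AntiSymmRels}} and \emph{\Cref{EnyRels}}~(3)(iv). Your clearing of denominators to the polynomial identity $AC\sigma_{2i+1}B=B\sigma_{2i+1}AC$ is legitimate (the factors are invertible in $\mathcal{A}_r(\delta)[[t]]$ and pairwise commute), your move-past rules are correct consequences of \emph{\Cref{OddSigSkeinRels}} together with $\sigma_{2i+1}^{2}=1$, and the collapse to $(A\alpha\sigma_{2i+1}-\sigma_{2i+1}\alpha A)B+\beta\sigma_{2i+1}AC=0$ checks out; this is a genuinely more economical bookkeeping, since it replaces the paper's twelve terms by a degree-$2$ polynomial identity. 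Your $t^{0}$ verification, reducing to $[\sigma_{2i+1},N_{2i}+N_{2i+1}+N_{2i+2}]=0$, is also correct and mirrors the argument the paper already uses to prove \emph{\Cref{OddSigSkeinRels}}~(2).

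However, the proof is not complete: the $t^{1}$ and $t^{2}$ coefficients are exactly where the entire non-formal content of the proposition lives, and you have only listed the relations you ``expect to need'' rather than carried out the computation. Concretely, after substituting the $t^{0}$ identity, the $t^{1}$ coefficient becomes
\[
-[\sigma_{2i+1},\,N_{2i}N_{2i+2}]\;-\;[\sigma_{2i+1},N_{2i+1}]\,(N_{2i}+N_{2i+1}+N_{2i+2})\;=\;0,
\]
and this is \emph{not} a formal consequence of the centrality of $\sum_j N_j$: expanding $[\sigma_{2i+1},N_{2i}N_{2i+2}]$ via \emph{\Cref{OddSigSkeinRels}} produces products of JM-elements with the error terms $e_{2i+1}e_{2i}$, $e_{2i}e_{2i+1}$, $e_{2i}e_{2i+1}\sigma_{2i+1}e_{2i+1}e_{2i}$, and their cancellation genuinely requires \emph{\Cref{AntiSymmRels}}, the absorption $\sigma_{2i+1}e_{2i}=e_{2i}$, and \emph{\Cref{EnyRels}}~(3)(iv) -- precisely the manipulations occupying the bulk of the paper's proof (the checks $C_1=-D_1$ through $E_3=-D_3$). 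The $t^{2}$ coefficient is of the same character. Until those two coefficient identities are actually verified, the argument is a correct reduction plus an unproven claim, not a proof.
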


\begin{proof}
By \emph{\Cref{CommutingRels}} (3) we have that $\sigma_{2i+1}N_{j}=N_{j}\sigma_{2i+1}$ for all $j\neq 2i,2i+1,2i+2$. Hence to prove this proposition it suffices to show
\[ \sigma_{2i+1}\frac{1+N_{2i+1}t}{(1-N_{2i}t)(1-N_{2i+2}t)} = \frac{1+N_{2i+1}t}{(1-N_{2i}t)(1-N_{2i+2}t)}\sigma_{2i+1}. \]
To do this we will start with the left hand side and use the relations of \emph{\Cref{OddSigSkeinRels}} to pull $\sigma_{2i+1}$ to the right. In doing so we pick up many additional terms, and showing that such terms all cancel out will complete the proof. To begin, we have that
\begin{align*}
\sigma_{2i+1}(1+N_{2i+1}t) &= \sigma_{2i+1} + (\sigma_{2i+1}N_{2i+1})t \\
&= \sigma_{2i+1} + (N_{2i+1}\sigma_{2i+1} + \sigma_{2i+1}e_{2i+1}e_{2i} + e_{2i}e_{2i+1} - e_{2i+1}e_{2i} - e_{2i}e_{2i+1}\sigma_{2i+1})t \\
&= (1+N_{2i+1}t)\sigma_{2i+1} + (\sigma_{2i+1}e_{2i+1}e_{2i} + e_{2i}e_{2i+1} - e_{2i+1}e_{2i} - e_{2i}e_{2i+1}\sigma_{2i+1})t
\end{align*}
where the second equality follows from \emph{\Cref{OddSigSkeinRels}} (2) and the relations $\sigma_{2i+1}e_{2i}=e_{2i}$ and $\sigma_{2i+1}^{2}=1$. From this we have
\begin{align*}
\sigma_{2i+1}\frac{1+N_{2i+1}t}{(1-N_{2i}t)(1-N_{2i+2}t)} &= \sigma_{2i+1}(1+N_{2i+1}t)\frac{1}{(1-N_{2i}t)(1-N_{2i+2}t)} \\
&= (1+N_{2i+1}t)\sigma_{2i+1}\frac{1}{(1-N_{2i}t)(1-N_{2i+2}t)} + C
\end{align*}
where $C = C_{1}+C_{2}+C_{3}+C_{4}$ with
\begin{align*}
C_{1} &:= \sigma_{2i+1}e_{2i+1}e_{2i}\frac{t}{(1-N_{2i}t)(1-N_{2i+2}t)}, \\
C_{2} &:= e_{2i}e_{2i+1}\frac{t}{(1-N_{2i}t)(1-N_{2i+2}t)}, \\
C_{3} &:= -e_{2i+1}e_{2i}\frac{t}{(1-N_{2i}t)(1-N_{2i+2}t)}, \\
C_{4} &:= -e_{2i}e_{2i+1}\sigma_{2i+1}\frac{t}{(1-N_{2i}t)(1-N_{2i+2}t)}.
\end{align*}
Next we have that
\begin{align*}
\sigma_{2i+1}(1-N_{2i+2}t) &= \sigma_{2i+1} - (\sigma_{2i+1}N_{2i+2})t \\
&= \sigma_{2i+1} - (N_{2i}\sigma_{2i+1} - \sigma_{2i+1}e_{2i+1}e_{2i} - e_{2i}e_{2i+1} + e_{2i}e_{2i+1}\sigma_{2i+1}e_{2i+1}e_{2i} + 1)t \\
&= (1-N_{2i}t)\sigma_{2i+1} + (\sigma_{2i+1}e_{2i+1}e_{2i} + e_{2i}e_{2i+1} - e_{2i}e_{2i+1}\sigma_{2i+1}e_{2i+1}e_{2i} - 1)t
\end{align*}
where the second equality follows from \emph{\Cref{OddSigSkeinRels}} (1) and the relations $\sigma_{2i+1}e_{2i}=e_{2i}$ and $\sigma_{2i+1}^{2}=1$. Then multiplying on the left by $(1-N_{2i}t)^{-1}$ and on the right by $(1-N_{2i+2}t)^{-1}$, then rearranging yields
\begin{align*}
\sigma_{2i+1}\frac{1}{1-N_{2i+2}t} = \frac{1}{1-N_{2i}t}\sigma_{2i+1} &+ \frac{1}{1-N_{2i}t}\Big(-\sigma_{2i+1}e_{2i+1}e_{2i} \\
&- e_{2i}e_{2i+1} + e_{2i}e_{2i+1}\sigma_{2i+1}e_{2i+1}e_{2i} + 1\Big)\frac{t}{1-N_{2i+2}t}
\end{align*}
From this we have that
\begin{align*}
(1+N_{2i+1}t)\sigma_{2i+1}\frac{1}{(1-N_{2i}t)(1-N_{2i+2}t)} + C &= (1+N_{2i+1}t)\left(\sigma_{2i+1}\frac{1}{1-N_{2i+2}t}\right)\frac{1}{1-N_{2i}t} + C \\
&= \frac{1+N_{2i+1}t}{1-N_{2i}t}\sigma_{2i+1}\frac{1}{1-N_{2i}t} + D + C,
\end{align*}
where $D=D_{1}+D_{2}+D_{3}+D_{4}$ with
\begin{align*}
D_{1} &:= -\frac{1+N_{2i+1}t}{1-N_{2i}t}\sigma_{2i+1}e_{2i+1}e_{2i}\frac{t}{(1-N_{2i}t)(1-N_{2i+2}t)},  \\
D_{2} &:= -\frac{1+N_{2i+1}t}{1-N_{2i}t}e_{2i}e_{2i+1}\frac{t}{(1-N_{2i}t)(1-N_{2i+2}t)}, \\
D_{3} &:= \frac{1+N_{2i+1}t}{1-N_{2i}t}e_{2i}e_{2i+1}\sigma_{2i+1}e_{2i+1}e_{2i}\frac{t}{(1-N_{2i}t)(1-N_{2i+2}t)}, \\
D_{4} &:= \frac{1+N_{2i+1}t}{1-N_{2i}t}\frac{t}{(1-N_{2i}t)(1-N_{2i+2}t)}.
\end{align*}
Lastly we have that
\begin{align*}
\sigma_{2i+1}(1-N_{2i}t) &= \sigma_{2i+1} - (\sigma_{2i+1}N_{2i})t \\
&= \sigma_{2i+1} - (N_{2i+2}\sigma_{2i+1} + e_{2i+1}e_{2i} + e_{2i}e_{2i+1}\sigma_{2i+1} - e_{2i}e_{2i+1}\sigma_{2i+1}e_{2i+1}e_{2i} - 1)t \\
&= (1-N_{2i+2}t)\sigma_{2i+1} + (-e_{2i+1}e_{2i} - e_{2i}e_{2i+1}\sigma_{2i+1} + e_{2i}e_{2i+1}\sigma_{2i+1}e_{2i+1}e_{2i} + 1)t
\end{align*}
where the second equality follows from \emph{\Cref{OddSigSkeinRels}} (1) and the relations $e_{2i}\sigma_{2i+1}=e_{2i}$ and $\sigma_{2i+1}^{2}=1$. Then multiplying on the left by $(1-N_{2i+2}t)^{-1}$ and on the right by $(1-N_{2i}t)^{-1}$, then rearranging yields
\begin{align*}
\sigma_{2i+1}\frac{1}{1-N_{2i}t} = \frac{1}{1-N_{2i+2}t}\sigma_{2i+1} &+ \frac{1}{1-N_{2i+2}t}\Big(e_{2i+1}e_{2i} \\
&+ e_{2i}e_{2i+1}\sigma_{2i+1} - e_{2i}e_{2i+1}\sigma_{2i+1}e_{2i+1}e_{2i} - 1\Big)\frac{t}{1-N_{2i}t}
\end{align*}
From this we have that
\[ \frac{1+N_{2i+1}t}{1-N_{2i}t}\sigma_{2i+1}\frac{1}{1-N_{2i}t} + D + C = \frac{1+N_{2i+1}t}{(1-N_{2i}t)(1-N_{2i+2}t)}\sigma_{2i+1} + E + D + C, \]
where $E=E_{1}+E_{2}+E_{3}+E_{4}$ with 
\begin{align*}
E_{1} &:= \frac{1+N_{2i+1}t}{(1-N_{2i}t)(1-N_{2i+2}t)}e_{2i+1}e_{2i}\frac{t}{1-N_{2i}t},  \\
E_{2} &:= \frac{1+N_{2i+1}t}{(1-N_{2i}t)(1-N_{2i+2}t)}e_{2i}e_{2i+1}\sigma_{2i+1}\frac{t}{1-N_{2i}t},  \\
E_{3} &:= -\frac{1+N_{2i+1}t}{(1-N_{2i}t)(1-N_{2i+2}t)}e_{2i}e_{2i+1}\sigma_{2i+1}e_{2i+1}e_{2i}\frac{t}{1-N_{2i}t},  \\
E_{4} &:= -\frac{1+N_{2i+1}t}{(1-N_{2i}t)(1-N_{2i+2}t)}\frac{t}{1-N_{2i}t},  \\
\end{align*}
Thus collectively we have
\[ \sigma_{2i+1}\frac{1+N_{2i+1}t}{(1-N_{2i}t)(1-N_{2i+2}t)} = \frac{1+N_{2i+1}t}{(1-N_{2i}t)(1-N_{2i+2}t)}\sigma_{2i+1} + E + D + C, \]  
and so the result will follow if we can show that $E+D+C=0$. We do this by showing that the terms $C_{i},D_{i},E_{i}$ pair up into additive complements:

$(C_{1}=-D_{1})$:
\begin{align*}
D_{1} &= -\frac{1+N_{2i+1}t}{1-N_{2i}t}\sigma_{2i+1}e_{2i+1}e_{2i}\frac{t}{(1-N_{2i}t)(1-N_{2i+2}t)} \\
&= -\frac{1+N_{2i+1}t}{1-N_{2i}t}L_{2i}e_{2i}\frac{t}{(1-N_{2i}t)(1-N_{2i+2}t)} \hspace{18mm} (\text{by } \Cref{EnyRels} (3)(iv)) \\
&= -L_{2i}\frac{1+N_{2i+1}t}{1-N_{2i}t}e_{2i}\frac{t}{(1-N_{2i}t)(1-N_{2i+2}t)} \hspace{18mm} (\text{JM-elements commute}) \\
&= -L_{2i}e_{2i}\frac{t}{(1-N_{2i}t)(1-N_{2i+2}t)} \hspace{35mm} (\text{by } \Cref{AntiSymmRels}) \\
&= -\sigma_{2i+1}e_{2i+1}e_{2i}\frac{t}{(1-N_{2i}t)(1-N_{2i+2}t)} \hspace{24mm} (\text{by } \Cref{EnyRels} (3)(iv)) \\
&= -C_{1}.
\end{align*} 

$(C_{2}=-D_{2})$:
\begin{align*}
D_{2} &= -\frac{1+N_{2i+1}t}{1-N_{2i}t}e_{2i}e_{2i+1}\frac{t}{(1-N_{2i}t)(1-N_{2i+2}t)} \\
&= -e_{2i}e_{2i+1}\frac{t}{(1-N_{2i}t)(1-N_{2i+2}t)} \hspace{25mm} (\text{by } \Cref{AntiSymmRels}) \\
&= -C_{2}.
\end{align*}

$(C_{3}=-E_{1})$: Firstly we have that
\begin{align*}
e_{2i+1}e_{2i}N_{2i+2} &= e_{2i+1}N_{2i+2}e_{2i} \hspace{9mm} (\text{by } \Cref{CommutingRels} (1)) \\
&= - e_{2i+1}N_{2i+1}e_{2i} \hspace{6mm} (\text{by } \Cref{AntiSymmRels}) \\
&= e_{2i+1}N_{2i}e_{2i} \hspace{12mm} (\text{by } \Cref{AntiSymmRels}) \\
&= N_{2i}e_{2i+1}e_{2i} \hspace{12mm} (\text{by } \Cref{CommutingRels} (1)).
\end{align*}
As such $e_{2i+1}e_{2i}(1-N_{2i+2}t) = (1-N_{2i}t)e_{2i+1}e_{2i}$ and so
\[ \frac{1}{1-N_{2i}t}e_{2i+1}e_{2i} = e_{2i+1}e_{2i}\frac{1}{1-N_{2i+2}t}. \tag{Eq2} \]
Using this we see that
\begin{align*}
E_{1} &= \frac{1+N_{2i+1}t}{(1-N_{2i}t)(1-N_{2i+2}t)}e_{2i+1}e_{2i}\frac{t}{1-N_{2i}t} \\
&= \frac{1}{1-N_{2i}t}e_{2i+1}e_{2i}\frac{t}{1-N_{2i}t} \hspace{32mm} (\text{by } \Cref{AntiSymmRels}) \\
&= e_{2i+1}e_{2i}\frac{t}{(1-N_{2i}t)(1-N_{2i+2}t)} \hspace{24mm} (From \text{ Eq2}) \\
&= -C_{3}.
\end{align*}

$(C_{4}=-E_{2})$:
\begin{align*}
E_{2} &= \frac{1+N_{2i+1}t}{(1-N_{2i}t)(1-x
N_{2i+2}t)}e_{2i}e_{2i+1}\sigma_{2i+1}\frac{t}{1-N_{2i}t}  \\
&= \frac{1+N_{2i+1}t}{(1-N_{2i}t)(1-N_{2i+2}t)}e_{2i}L_{2i}\frac{t}{1-N_{2i}t} \hspace{19.5mm} (\text{by } \Cref{EnyRels} (3)(iv)) \\
&= \frac{1}{1-N_{2i+2}t}e_{2i}L_{2i}\frac{t}{1-N_{2i}t} \hspace{37mm} (\text{by } \Cref{AntiSymmRels}) \\
&= e_{2i}L_{2i}\frac{t}{(1-N_{2i}t)(1-N_{2i+2}t)} \hspace{32mm} (\text{$N_{2i+2}$ commutes with $e_{2i}$ and $L_{2i}$}) \\
&= e_{2i}e_{2i+1}\sigma_{2i+1}\frac{t}{(1-N_{2i}t)(1-N_{2i+2}t)} \hspace{21mm} (\text{by } \Cref{EnyRels} (3)(iv)) \\
&= -C_{4}.
\end{align*}

$(E_{3}=-D_{3})$: First of all we have that
\begin{align*}
N_{2i+2}e_{2i}e_{2i+1}\sigma_{2i+1}e_{2i+1}e_{2i} &= N_{2i+2}e_{2i}L_{2i}e_{2i+1}e_{2i} \hspace{15mm} (\text{by } \Cref{EnyRels} (3)(iv)) \\
&= e_{2i}L_{2i}N_{2i+2}e_{2i+1}e_{2i} \hspace{15mm} (\text{$N_{2i+2}$ commutes with $e_{2i}$ and $L_{2i}$}) \\
&= -e_{2i}L_{2i}N_{2i+1}e_{2i+1}e_{2i} \hspace{12.5mm} (\text{by } \Cref{AntiSymmRels}) \\
&= e_{2i}N_{2i}L_{2i}e_{2i+1}e_{2i} \hspace{19mm} (\text{by } \Cref{AntiSymmRels}) \\
&= e_{2i}L_{2i}e_{2i+1}N_{2i}e_{2i} \hspace{19mm} (\text{$N_{2i}$ commutes with $e_{2i+1}$ and $L_{2i}$}) \\
&= e_{2i}L_{2i}e_{2i+1}N_{2i+2}e_{2i} \hspace{15.5mm} (\text{by } 2 \times \Cref{AntiSymmRels}) \\
&= e_{2i}L_{2i}e_{2i+1}e_{2i}N_{2i+2} \hspace{15.5mm} (\text{by } \Cref{CommutingRels} (1)) \\
&= e_{2i}e_{2i+1}\sigma_{2i+1}e_{2i+1}e_{2i}N_{2i+2} \hspace{4.5mm} (\text{by } \Cref{EnyRels} (3)(iv)).
\end{align*}
Hence $(1-N_{2i+2}t)e_{2i}e_{2i+1}\sigma_{2i+1}e_{2i+1}e_{2i} = e_{2i}e_{2i+1}\sigma_{2i+1}e_{2i+1}e_{2i}(1-N_{2i+2}t)$. Therefore
\[ \frac{1}{1-N_{2i+2}t}e_{2i}e_{2i+1}\sigma_{2i+1}e_{2i+1}e_{2i} = e_{2i}e_{2i+1}\sigma_{2i+1}e_{2i+1}e_{2i}\frac{1}{1-N_{2i+2}t}. \]
From this we have that
\begin{align*}
E_{3} &= -\frac{1+N_{2i+1}t}{(1-N_{2i}t)(1-N_{2i+2}t)}e_{2i}e_{2i+1}\sigma_{2i+1}e_{2i+1}e_{2i}\frac{t}{1-N_{2i}t} \\
&= -\frac{1+N_{2i+1}t}{1-N_{2i}t}e_{2i}e_{2i+1}\sigma_{2i+1}e_{2i+1}e_{2i}\frac{t}{(1-N_{2i}t)(1-N_{2i+2}t)}  \\
&= -D_{3}.
\end{align*}

$(E_{4}=-D_{4})$: This is immediate.

\vspace{2mm}
\noindent
Altogether we have that
\begin{align*}
C_{1} &= -D_{1}, \hspace{3mm} C_{3} = -E_{1}, \hspace{3mm} E_{3} = -D_{3}, \\
C_{2} &= -D_{2}, \hspace{3mm} C_{4} = -E_{2}, \hspace{3mm} E_{4} = -D_{4}.
\end{align*}
Hence $E+D+C=0$ as required.

\end{proof}

Since $\mathcal{A}_{r}(\delta) = \langle e_{1},e_{2},\dots,e_{r-1},\sigma_{2},\sigma_{3},\dots,\sigma_{r-1} \rangle$, we could attempt to show that $\sigma_{2i}$ commutes with any $l_{n}(N_{1},\dots,N_{r})$ in the same manner done above. However, it turns out to be very difficult to show that the additional terms which appear cancel out. So instead we prove that $s_{i}$ and $l_{n}(N_{1},\dots,N_{r})$ commute. We will show this is the exact same manner as done above with $\sigma_{2i+1}$, and we will use the above proposition only once. This may seem excessive, but it appears necessary if one wants to prove it in this manner (see \emph{\Cref{CoxCommuteRmk}}).

\begin{rmk} \label{JK17CoxCommuteRmk}
In \cite{JK17} they showed that the Coxeter generators commute with supersymmetric polynomials in the JM-elements by showing that they commute with symmetric polynomials in $L_{1},\dots,L_{r}$, and that they commute with symmetric polynomials in $L_{r+1},\dots,L_{r+s}$. This was done by checking that $s_{i}$ commutes with both $L_{i}+L_{i+1}$ and $L_{i}L_{i+1}$ for all $i\neq r$. Since the supersymmetric power-sum polynomials are a sum of symmetric polynomials in the two sets of variables, it then follows that $s_{i}$ commutes with them. So in fact the Coxeter generators commuted with the larger algebra of polynomials symmetric in both sets of variables. For our situation, the analogous approach would be to show that symmetric polynomials in the odd indexed normalised JM-elements commute with the Coxeter generators, and similarly for the even index normalised JM-elements. However this is not true. From \emph{\Cref{CoxSkeinRels}} one can show that the commutators $[s_{i},L_{2i-1}+L_{2i+1}]$ and $[s_{i},L_{2i}+L_{2i+1}]$ are non-zero, and in fact we have
\[ [s_{i},L_{2i-1}+L_{2i+1}] = -[s_{i},L_{2i}+L_{2i+1}]. \]
In our setting, the cancellation property is needed for the Coxeter generators to commute.
\end{rmk}

\begin{prop} \label{CoxCommute}
In $\mathcal{A}_{r}(\delta)$ we have that
\[ s_{i}l_{n}(N_{1},\dots,N_{r}) = l_{n}(N_{1},\dots,N_{r})s_{i}, \]
for any $n\geq 0$ and $1\leq i \leq \floor{r/2}-1$. 
\end{prop}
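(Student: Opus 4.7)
The plan is to imitate the generating-function argument of \Cref{OddSigCommute}. By \Cref{CommutingRels} (2), $s_i$ commutes with $N_j$ for every $j \notin \{2i-1, 2i, 2i+1, 2i+2\}$, so it suffices to show
\[
s_i F = F s_i, \qquad F := \frac{(1+N_{2i-1}t)(1+N_{2i+1}t)}{(1-N_{2i}t)(1-N_{2i+2}t)} \in \mathcal{A}_r(\delta)[[t]].
\]
Starting from $s_i F$, I would push $s_i$ to the right one factor at a time, using \Cref{CoxSkeinRels} (multiplied by $s_i$ on one side, so that each relation reads $s_i N_{2i-1} = N_{2i+1} s_i + R_1$, $s_i N_{2i} = N_{2i+2} s_i + R_2$, together with the two mirror versions obtained from $s_i^2 = 1$). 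This yields $s_i F = F s_i + \Sigma$, where $\Sigma$ is a finite sum of correction terms of the same shape as the $C_j, D_j, E_j$ expressions appearing in \Cref{OddSigCommute}, but with more families because all four factors of $F$ generate corrections.

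The second step is to prove $\Sigma = 0$ by pairing its summands into additive complements. The main tools are \Cref{AntiSymmRels} (to collapse $(1-N_{2i}t)^{-1}$ against an adjacent $e_{2i}$), the commutation relations of \Cref{CommutingRels} (1) (to slide $e_j$'s past $N_k$'s), the identities $s_i = \sigma_{2i}\sigma_{2i+1} = \sigma_{2i+1}\sigma_{2i}$ and $\sigma_{2i\pm 1} e_{2i} = e_{2i}$ from \Cref{EnyRels} (1), the mixed relations \Cref{EnyRels} (3)(iii),(iv) that convert $\sigma_{2i} e_{2i-1} e_{2i}$ and $\sigma_{2i+1} e_{2i+1} e_{2i}$ into $L_{2i} e_{2i}$, and the diagrammatic identities in \Cref{StRels}. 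At one point during the bookkeeping, a stray $\sigma_{2i+1}$ ends up sandwiched between a correction term and an outer copy of $F$; this is where \Cref{OddSigCommute} is invoked exactly once, to swap $\sigma_{2i+1}$ past $F$ at no cost and move it to a position where it cancels with its partner.

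The main obstacle is the sheer bookkeeping. In \Cref{OddSigCommute} the twelve corrections grouped cleanly into six pairs because $\sigma_{2i+1}$ interacts with only three consecutive JM-elements. Here $s_i$ interacts with four, so not only are there strictly more correction terms, but corrections generated by the two numerator factors can interact with those from the two denominator factors after absorption via \Cref{AntiSymmRels}, breaking the clean symmetry enjoyed in the previous proof. Choreographing the cancellations — in particular, locating the single step that forces the use of \Cref{OddSigCommute} and confirming that the residual identities reduce to purely diagrammatic ones — is the technical heart of the argument.
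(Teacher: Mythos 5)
Your plan is exactly the paper's proof: reduce to the four-factor generating function $F$, pull $s_{i}$ through one factor at a time via \emph{\Cref{CoxSkeinRels}} (the paper works from $Fs_{i}$ leftward, producing sixteen correction terms $C_{j},D_{j},E_{j},F_{j}$), pair them into additive complements using \emph{\Cref{AntiSymmRels}}, \emph{\Cref{CommutingRels}}, \emph{\Cref{EnyRels}} (1), (3)(iii)--(iv) and \emph{\Cref{StRels}}, and invoke \emph{\Cref{OddSigCommute}} exactly once (in the paper, for the pair $C_{1}=-F_{1}$, where $\sigma_{2i+1}e_{2i-1}e_{2i}$ must be carried past the generating function). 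The only thing missing is the execution of the cancellation bookkeeping itself, which you correctly identify as the technical heart and which the paper carries out in full; your worry that numerator and denominator corrections interact badly does not materialise --- they pair off cleanly as $C_{1}=-F_{1}$, $C_{2}=-F_{2}$, $C_{3}=-D_{3}$, $C_{4}=-D_{4}$, $D_{1}=-E_{1}$, $D_{2}=-E_{2}$, $E_{3}=-F_{3}$, $E_{4}=-F_{4}$.
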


\begin{proof}
From \emph{\Cref{CommutingRels}} (2) we know that $s_{i}N_{j}=N_{j}s_{i}$ for all $j \neq 2i-1,2i,2i+1,2i+2$, hence to prove the result it suffices to show that
\[ s_{i}\frac{(1+N_{2i-1}t)(1+N_{2i+1}t)}{(1-N_{2i}t)(1-N_{2i+2}t)} = \frac{(1+N_{2i-1}t)(1+N_{2i+1}t)}{(1-N_{2i}t)(1-N_{2i+2}t)}s_{i}. \]
We start from the right hand side, pull $s_{i}$ through using the relations of \emph{\Cref{CoxSkeinRels}}, and then show that the additional terms which appear all cancel out. Firstly we have that
\begin{align*}
s_{i}(1-N_{2i}t) &= s_{i} - (s_{i}N_{2i})t \\
&= s_{i} - (N_{2i+2}s_{i} + \sigma_{2i+1}e_{2i-1}e_{2i} + e_{2i}e_{2i-1}\sigma_{2i} - e_{2i}e_{2i+1}\sigma_{2i+1}e_{2i+1}e_{2i} - \sigma_{2i})t \\
&= (1-N_{2i+2}t)s_{i} + (-\sigma_{2i+1}e_{2i-1}e_{2i} - e_{2i}e_{2i-1}\sigma_{2i} + e_{2i}e_{2i+1}\sigma_{2i+1}e_{2i+1}e_{2i} + \sigma_{2i})t
\end{align*}
where the second equality follows from \emph{\Cref{CoxSkeinRels}} (2). From this we have that
\begin{align*}
\frac{1}{1-N_{2i+2}t}s_{i} = s_{i}\frac{1}{1-N_{2i}t} &+ \frac{1}{1-N_{2i+2}t}\Big(-\sigma_{2i+1}e_{2i-1}e_{2i} \\
&- e_{2i}e_{2i-1}\sigma_{2i} + e_{2i}e_{2i+1}\sigma_{2i+1}e_{2i+1}e_{2i} + \sigma_{2i}\Big)\frac{t}{1-N_{2i}t}.
\end{align*}
Hence we obtain
\begin{align*}
\frac{(1+N_{2i-1}t)(1+N_{2i+1}t)}{(1-N_{2i}t)(1-N_{2i+2}t)}s_{i} &= \frac{(1+N_{2i-1}t)(1+N_{2i+1}t)}{1-N_{2i}t} \left(\frac{1}{1-N_{2i+2}t}s_{i} \right) \\
&= \frac{(1+N_{2i-1}t)(1+N_{2i+1}t)}{1-N_{2i}t}s_{i}\frac{1}{1-N_{2i}t} + C,
\end{align*}
where $C = C_{1}+C_{2}+C_{3}+C_{4}$ with
\begin{align*}
C_{1} &:= -\frac{(1+N_{2i-1}t)(1+N_{2i+1}t)}{(1-N_{2i}t)(1-N_{2i+2}t)}\sigma_{2i+1}e_{2i-1}e_{2i}\frac{t}{1-N_{2i}t}, \\
\\
C_{2} &:= -\frac{(1+N_{2i-1}t)(1+N_{2i+1}t)}{(1-N_{2i}t)(1-N_{2i+2}t)}e_{2i}e_{2i-1}\sigma_{2i}\frac{t}{1-N_{2i}t}, \\
\\
C_{3} &:= \frac{(1+N_{2i-1}t)(1+N_{2i+1}t)}{(1-N_{2i}t)(1-N_{2i+2}t)}e_{2i}e_{2i+1}\sigma_{2i+1}e_{2i+1}e_{2i}\frac{t}{1-N_{2i}t}, \\
\\
C_{4} &:= \frac{(1+N_{2i-1}t)(1+N_{2i+1}t)}{(1-N_{2i}t)(1-N_{2i+2}t)}\sigma_{2i}\frac{t}{1-N_{2i}t}.
\end{align*}
Next we have that
\begin{align*}
s_{i}(1-N_{2i+2}t) &= s_{i} - (s_{i}N_{2i+2})t \\
&= s_{i} - (N_{2i}s_{i} - \sigma_{2i}e_{2i-1}e_{2i} - e_{2i}e_{2i-1}\sigma_{2i+1} + e_{2i}e_{2i+1}\sigma_{2i+1}e_{2i+1}e_{2i} + \sigma_{2i})t \\
&= (1-N_{2i}t)s_{i} + (\sigma_{2i}e_{2i-1}e_{2i} + e_{2i}e_{2i-1}\sigma_{2i+1} - e_{2i}e_{2i+1}\sigma_{2i+1}e_{2i+1}e_{2i} - \sigma_{2i})t
\end{align*}
where the second equality follows from \emph{\Cref{CoxSkeinRels}} (2). From this we have that
\begin{align*}
\frac{1}{1-N_{2i}t}s_{i} = s_{i}\frac{1}{1-N_{2i+2}t} &+ \frac{1}{1-N_{2i}t}\Big(\sigma_{2i}e_{2i-1}e_{2i} \\
&+ e_{2i}e_{2i-1}\sigma_{2i+1} - e_{2i}e_{2i+1}\sigma_{2i+1}e_{2i+1}e_{2i} - \sigma_{2i}\Big)\frac{t}{1-N_{2i+2}t}.
\end{align*}
With this we obtain
\begin{align*}
(1+N_{2i-1}t)(1+N_{2i+1}t)&\left(\frac{1}{1-N_{2i}t}s_{i}\right)\frac{1}{1-N_{2i}t} + C \\
\\
&= (1+N_{2i-1}t)(1+N_{2i+1}t)s_{i}\frac{1}{(1-N_{2i}t)(1-N_{2i+2}t)} + D + C,
\end{align*}
where $D = D_{1}+D_{2}+D_{3}+D_{4}$ with
\begin{align*}
D_{1} &:= \frac{(1+N_{2i-1}t)(1+N_{2i+1}t)}{1-N_{2i}t}\sigma_{2i}e_{2i-1}e_{2i}\frac{t}{(1-N_{2i}t)(1-N_{2i+2}t)}, \\
\\
D_{2} &:= \frac{(1+N_{2i-1}t)(1+N_{2i+1}t)}{1-N_{2i}t}e_{2i}e_{2i-1}\sigma_{2i+1}\frac{t}{(1-N_{2i}t)(1-N_{2i+2}t)}, \\
\\
D_{3} &:= -\frac{(1+N_{2i-1}t)(1+N_{2i+1}t)}{1-N_{2i}t}e_{2i}e_{2i+1}\sigma_{2i+1}e_{2i+1}e_{2i}\frac{t}{(1-N_{2i}t)(1-N_{2i+2}t)}, \\
\\
D_{4} &:= -\frac{(1+N_{2i-1}t)(1+N_{2i+1}t)}{1-N_{2i}t}\sigma_{2i}\frac{t}{(1-N_{2i}t)(1-N_{2i+2}t)}.
\end{align*}
Next we have that
\begin{align*}
(1+N_{2i+1}t)s_{i} &= s_{i} + (N_{2i+1}s_{i})t \\
&= s_{i} + (s_{i}N_{2i-1} - \sigma_{2i}e_{2i-1}e_{2i} - e_{2i}e_{2i-1}\sigma_{2i+1} + e_{2i}e_{2i-1}\sigma_{2i}e_{2i-1}e_{2i} + \sigma_{2i+1})t \\
&= s_{i}(1+N_{2i-1}t) + (-\sigma_{2i}e_{2i-1}e_{2i} - e_{2i}e_{2i-1}\sigma_{2i+1} + e_{2i}e_{2i-1}\sigma_{2i}e_{2i-1}e_{2i} + \sigma_{2i+1})t
\end{align*}
where the second equality follows from \emph{\Cref{CoxSkeinRels}} (1). With this we have that
\begin{align*}
(1+N_{2i-1}t)(1+N_{2i+1}t)s_{i}&\frac{1}{(1-N_{2i}t)(1-N_{2i+2}t)} + D + C \\
\\
&= (1+N_{2i-1}t)s_{i}(1+N_{2i-1}t)\frac{1}{(1-N_{2i}t)(1-N_{2i+2}t)}  + E + D + C,
\end{align*}
where $E=E_{1}+E_{2}+E_{3}+E_{4}$ with
\begin{align*}
E_{1} &:= -(1+N_{2i-1}t)\sigma_{2i}e_{2i-1}e_{2i}\frac{t}{(1-N_{2i}t)(1-N_{2i+2}t)}, \\
\\
E_{2} &:= -(1+N_{2i-1}t)e_{2i}e_{2i-1}\sigma_{2i+1}\frac{t}{(1-N_{2i}t)(1-N_{2i+2}t)}, \\
\\
E_{3} &:= (1+N_{2i-1}t)e_{2i}e_{2i-1}\sigma_{2i}e_{2i-1}e_{2i}\frac{t}{(1-N_{2i}t)(1-N_{2i+2}t)}, \\
\\
E_{4} &:= (1+N_{2i-1}t)\sigma_{2i+1}\frac{t}{(1-N_{2i}t)(1-N_{2i+2}t)}.
\end{align*}
Lastly we have that
\begin{align*}
(1+N_{2i-1}t)s_{i} &= s_{i} + (N_{2i-1}s_{i})t \\
&= s_{i} + (s_{i}N_{2i+1} + \sigma_{2i+1}e_{2i-1}e_{2i} + e_{2i}e_{2i-1}\sigma_{2i} - e_{2i}e_{2i-1}\sigma_{2i}e_{2i-1}e_{2i} - \sigma_{2i+1})t \\
&= s_{i}(1+N_{2i+1}t) + (\sigma_{2i+1}e_{2i-1}e_{2i} + e_{2i}e_{2i-1}\sigma_{2i} - e_{2i}e_{2i-1}\sigma_{2i}e_{2i-1}e_{2i} - \sigma_{2i+1})t
\end{align*}
where the second equality follows from \emph{\Cref{CoxSkeinRels}} (1). With this we have that
\begin{align*}
(1+N_{2i-1}t)s_{i}(1+N_{2i-1}t)&\frac{1}{(1-N_{2i}t)(1-N_{2i+2}t)}  + E + D + C \\
\\
&= s_{i}\frac{(1+N_{2i-1}t)(1+N_{2i+1}t)}{(1-N_{2i}t)(1-N_{2i+2}t)} + F + E + D + C
\end{align*}
where $F=F_{1}+F_{2}+F_{3}+F_{4}$ with
\begin{align*}
F_{1} &:= \sigma_{2i+1}e_{2i-1}e_{2i}\frac{(1+N_{2i-1}t)t}{(1-N_{2i}t)(1-N_{2i+2}t)}, \\
\\
F_{2} &:= e_{2i}e_{2i-1}\sigma_{2i}\frac{(1+N_{2i-1}t)t}{(1-N_{2i}t)(1-N_{2i+2}t)}, \\
\\
F_{3} &:= -e_{2i}e_{2i-1}\sigma_{2i}e_{2i-1}e_{2i}\frac{(1+N_{2i-1}t)t}{(1-N_{2i}t)(1-N_{2i+2}t)}, \\
\\
F_{4} &:= -\sigma_{2i+1}\frac{(1+N_{2i-1}t)t}{(1-N_{2i}t)(1-N_{2i+2}t)}.
\end{align*}
Thus collectively we have
\[ \frac{(1+N_{2i-1}t)(1+N_{2i+1}t)}{(1-N_{2i}t)(1-N_{2i+2}t)}s_{i} = s_{i}\frac{(1+N_{2i-1}t)(1+N_{2i+1}t)}{(1-N_{2i}t)(1-N_{2i+2}t)} + F + E + D + C, \]
and so the result will follow if we can show that $F+E+D+C=0$. We do this by showing that the terms $C_{i},D_{i},E_{i},F_{i}$ pair up into additive complements:

$(C_{1} = -F_{1})$:
\begin{align*}
C_{1} &= -\frac{(1+N_{2i-1}t)(1+N_{2i+1}t)}{(1-N_{2i}t)(1-N_{2i+2}t)}\sigma_{2i+1}e_{2i-1}e_{2i}\frac{t}{1-N_{2i}t}, \\
&= -\frac{1+N_{2i+1}t}{(1-N_{2i}t)(1-N_{2i+2}t)}\sigma_{2i+1}(1+N_{2i-1}t)e_{2i-1}e_{2i}\frac{t}{1-N_{2i}t}, \hspace{4mm} (\text{by } \Cref{CommutingRels} (3)) \\
&= -\sigma_{2i+1}\frac{(1+N_{2i-1}t)(1+N_{2i+1}t)}{(1-N_{2i}t)(1-N_{2i+2}t)}e_{2i-1}e_{2i}\frac{t}{1-N_{2i}t}, \hspace{19mm} (\text{by } \Cref{OddSigCommute}) \\
&= -\sigma_{2i+1}\frac{1+N_{2i+1}t}{1-N_{2i+2}t}e_{2i-1}e_{2i}\frac{t}{1-N_{2i}t}, \hspace{40mm} (\text{by } \Cref{AntiSymmRels}) \\
&= -\sigma_{2i+1}e_{2i-1}(1+N_{2i+1}t)e_{2i}\frac{t}{(1-N_{2i}t)(1-N_{2i+2}t)}, \hspace{17.5mm} (\text{by } \Cref{CommutingRels} (1)) \\
&= -\sigma_{2i+1}e_{2i-1}(1+N_{2i-1}t)e_{2i}\frac{t}{(1-N_{2i}t)(1-N_{2i+2}t)}, \hspace{17.5mm} (\text{by } 2 \times \Cref{AntiSymmRels}) \\
&= -\sigma_{2i+1}e_{2i-1}e_{2i}\frac{(1+N_{2i-1}t)t}{(1-N_{2i}t)(1-N_{2i+2}t)}, \hspace{35.5mm} (\text{by } \Cref{CommutingRels} (1)) \\
&= -F_{1}.
\end{align*}

$(C_{2} = -F_{2})$:
\begin{align*}
C_{2} &= -\frac{(1+N_{2i-1}t)(1+N_{2i+1}t)}{(1-N_{2i}t)(1-N_{2i+2}t)}e_{2i}e_{2i-1}\sigma_{2i}\frac{t}{1-N_{2i}t} \\
&= -\frac{(1+N_{2i-1}t)(1+N_{2i+1}t)}{(1-N_{2i}t)(1-N_{2i+2}t)}e_{2i}L_{2i}\frac{t}{1-N_{2i}t} \hspace{12mm} (\text{by } \Cref{EnyRels} (3)(iii)) \\
&= -\frac{1+N_{2i+1}t}{1-N_{2i}t}e_{2i}L_{2i}\frac{(1+N_{2i-1}t))t}{(1-N_{2i}t)(1-N_{2i+2}t)} \hspace{12mm} (\text{by } \Cref{CommutingRels} (1)) \\
&= -e_{2i}L_{2i}\frac{(1+N_{2i-1}t))t}{(1-N_{2i}t)(1-N_{2i+2}t)} \hspace{28.5mm} (\text{by } \Cref{AntiSymmRels}) \\
&= -e_{2i}e_{2i-1}\sigma_{2i}\frac{(1+N_{2i-1}t))t}{(1-N_{2i}t)(1-N_{2i+2}t)} \hspace{21mm} (\text{by } \Cref{EnyRels} (3)(iii)) \\
&= -F_{2}.
\end{align*}

$(C_{3} = -D_{3})$: Firstly we have that
\begin{align*}
N_{2i+2}e_{2i}e_{2i+1}\sigma_{2i+1}e_{2i+1}e_{2i} &= N_{2i+2}e_{2i}e_{2i+1}L_{2i}e_{2i} \hspace{16mm} (\text{by } \Cref{EnyRels} (3)(iv)) \\
&= e_{2i}N_{2i+2}e_{2i+1}L_{2i}e_{2i} \hspace{16mm} (\text{by } \Cref{CommutingRels} (1)) \\
&= e_{2i}N_{2i}e_{2i+1}L_{2i}e_{2i} \hspace{19.5mm} (\text{by } 2 \times \Cref{AntiSymmRels}) \\
&= e_{2i}e_{2i+1}L_{2i}N_{2i}e_{2i} \hspace{19.5mm} (\text{by } \Cref{CommutingRels} (1)) \\
&= -e_{2i}e_{2i+1}L_{2i}N_{2i+1}e_{2i} \hspace{13mm} (\text{by } \Cref{AntiSymmRels}) \\
&= e_{2i}e_{2i+1}N_{2i+2}L_{2i}e_{2i} \hspace{16mm} (\text{by } \Cref{AntiSymmRels}) \\
&= e_{2i}e_{2i+1}L_{2i}e_{2i}N_{2i+2} \hspace{16mm} (\text{by } \Cref{CommutingRels} (1)) \\
&= e_{2i}e_{2i+1}\sigma_{2i+1}e_{2i+1}e_{2i}N_{2i+2} \hspace{5mm} (\text{by } \Cref{EnyRels} (3)(iv))
\end{align*}
Hence we have that
\[ \frac{1}{1-N_{2i+2}t}e_{2i}e_{2i+1}\sigma_{2i+1}e_{2i+1}e_{2i}  = e_{2i}e_{2i+1}\sigma_{2i+1}e_{2i+1}e_{2i} \frac{1}{1-N_{2i+2}t}. \]
From this we have
\begin{align*}
C_{3} &= \frac{(1+N_{2i-1}t)(1+N_{2i+1}t)}{(1-N_{2i}t)(1-N_{2i+2}t)}e_{2i}e_{2i+1}\sigma_{2i+1}e_{2i+1}e_{2i}\frac{t}{1-N_{2i}t} \\
&= \frac{(1+N_{2i-1}t)(1+N_{2i+1}t)}{1-N_{2i}t}e_{2i}e_{2i+1}\sigma_{2i+1}e_{2i+1}e_{2i}\frac{t}{(1-N_{2i}t)(1-N_{2i+2}t)} \\
&= -D_{3}.
\end{align*}

$(C_{4} = -D_{4})$: By \emph{\Cref{CommutingRels}} (4), $N_{2i+2}$ commutes with $\sigma_{2i}$. Therefore we have
\begin{align*}
C_{4} &= \frac{(1+N_{2i-1}t)(1+N_{2i+1}t)}{(1-N_{2i}t)(1-N_{2i+2}t)}\sigma_{2i}\frac{t}{1-N_{2i}t} \\
&= \frac{(1+N_{2i-1}t)(1+N_{2i+1}t)}{1-N_{2i}t}\sigma_{2i}\frac{t}{(1-N_{2i}t)(1-N_{2i+2}t)} \\
&= -D_{4}.
\end{align*}

$(D_{1} = -E_{1})$:
\begin{align*}
D_{1} &= \frac{(1+N_{2i-1}t)(1+N_{2i+1}t)}{1-N_{2i}t}\sigma_{2i}e_{2i-1}e_{2i}\frac{t}{(1-N_{2i}t)(1-N_{2i+2}t)} \\
&= \frac{(1+N_{2i-1}t)(1+N_{2i+1}t)}{1-N_{2i}t}L_{2i}e_{2i}\frac{t}{(1-N_{2i}t)(1-N_{2i+2}t)} \hspace{12mm} (\text{by } \Cref{EnyRels} (3)(iii)) \\
&= (1-N_{2i-1}t)L_{2i}\frac{1+N_{2i+1}t}{1-N_{2i}t}e_{2i}\frac{t}{(1-N_{2i}t)(1-N_{2i+2}t)} \hspace{15mm} (\text{Since JM-elements commute}) \\
&= (1-N_{2i-1}t)L_{2i}e_{2i}\frac{t}{(1-N_{2i}t)(1-N_{2i+2}t)} \hspace{31mm} (\text{by } \Cref{AntiSymmRels}) \\
&= (1-N_{2i-1}t)\sigma_{2i}e_{2i-1}e_{2i}\frac{t}{(1-N_{2i}t)(1-N_{2i+2}t)} \hspace{24mm} (\text{by } \Cref{EnyRels} (3)(iii)) \\
&= -E_{1}.
\end{align*}

$(D_{2} = -E_{2})$:
\begin{align*}
D_{2} &= \frac{(1+N_{2i-1}t)(1+N_{2i+1}t)}{1-N_{2i}t}e_{2i}e_{2i-1}\sigma_{2i+1}\frac{t}{(1-N_{2i}t)(1-N_{2i+2}t)} \\
&= (1+N_{2i-1}t)e_{2i}e_{2i-1}\sigma_{2i+1}\frac{t}{(1-N_{2i}t)(1-N_{2i+2}t)} \hspace{22mm} (\text{by } \Cref{AntiSymmRels}) \\
&= -E_{2}.
\end{align*}

$(E_{3} = -F_{3})$: We have that
\begin{align*}
N_{2i-1}e_{2i}e_{2i-1}\sigma_{2i}e_{2i-1}e_{2i} &= N_{2i-1}e_{2i}e_{2i-1}L_{2i}e_{2i} \hspace{16mm} (\text{by } \Cref{EnyRels} (3)(iii)) \\
&= e_{2i}N_{2i-1}e_{2i-1}L_{2i}e_{2i} \hspace{16mm} (\text{by } \Cref{CommutingRels} (1)) \\
&= e_{2i}N_{2i+1}e_{2i-1}L_{2i}e_{2i} \hspace{16mm} (\text{by } 2 \times \Cref{AntiSymmRels}) \\
&= e_{2i}e_{2i-1}L_{2i}N_{2i+1}e_{2i} \hspace{16mm} (\text{by } \Cref{CommutingRels} (1)) \\
&= -e_{2i}e_{2i-1}L_{2i}N_{2i}e_{2i} \hspace{17mm} (\text{by } \Cref{AntiSymmRels}) \\
&= e_{2i}e_{2i-1}N_{2i-1}L_{2i}e_{2i} \hspace{16mm} (\text{by } \Cref{AntiSymmRels}) \\
&= e_{2i}e_{2i-1}L_{2i}e_{2i}N_{2i-1} \hspace{16mm} (\text{by } \Cref{CommutingRels} (1)) \\
&= e_{2i}e_{2i-1}\sigma_{2i}e_{2i-1}e_{2i}N_{2i-1}. \hspace{8mm} (\text{by } \Cref{EnyRels} (3)(iii))
\end{align*}
Hence we have that
\[ (1+N_{2i-1}t)e_{2i}e_{2i-1}\sigma_{2i}e_{2i-1}e_{2i}  = e_{2i}e_{2i-1}\sigma_{2i}e_{2i-1}e_{2i}(1+N_{2i-1}t). \]
From this we have
\begin{align*}
E_{3} &= (1+N_{2i-1}t)e_{2i}e_{2i-1}\sigma_{2i}e_{2i-1}e_{2i}\frac{t}{(1-N_{2i}t)(1-N_{2i+2}t)} \\
&= e_{2i}e_{2i-1}\sigma_{2i}e_{2i-1}e_{2i}\frac{(1+N_{2i-1}t)t}{(1-N_{2i}t)(1-N_{2i+2}t)} \\
&= -F_{3}.
\end{align*}

$(E_{3} = -F_{3})$: We have that $E_{3} = -F_{3}$ since $N_{2i-1}$ commutes with $\sigma_{2i+1}$ by \emph{\Cref{CommutingRels}} (3).

\vspace{2mm}
\noindent
Altogether we have that
\begin{align*}
C_{1} &= -F_{1}, \hspace{3mm} D_{1} = -E_{1}, \\
C_{2} &= -F_{2}, \hspace{3mm} D_{2} = -E_{2}, \\
C_{3} &= -D_{3}, \hspace{3mm} E_{3} = -F_{3}, \\
C_{4} &= -D_{4}, \hspace{3mm} E_{4} = -F_{4}.
\end{align*}
Hence $F+E+D+C=0$ as required.

\end{proof}

\begin{rmk} \label{CoxCommuteRmk}
We only used the relation $\sigma_{2i+1}l_{n}(N_{1},\dots,N_{r}) = l_{n}(N_{1},\dots,N_{r})\sigma_{2i+1}$ when showing that $C_{1}=-F_{1}$. The issue here was the fact that the term $\sigma_{2i+1}e_{2i-1}e_{2i}=s_{i}L_{2i}e_{2i}$ was present, and so moving the generating function from left to right would require us to pass through either $\sigma_{2i+1}$ or $s_{i}$. Since the proposition itself is about $s_{i}$, it proved easier to check first that $\sigma_{2i+1}$ and $l_{n}(N_{1},\dots,N_{r})$ commute.
\end{rmk}

\begin{thm} \label{SSPCentral}
All supersymmetric polynomials in the normalised Jucys-Murphy elements are central in the partition algebra. That is $SS_{r}[N_{1},\dots,N_{r}] \subseteq Z(\mathcal{A}_{r}(\delta))$.
\end{thm}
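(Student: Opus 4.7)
The plan is to reduce the statement to a finite set of commutation checks that have already been carried out in the excerpt. Since $SS_{r}[N_{1},\dots,N_{r}]$ is by definition the image of $SS_{r}[x]$ under the homomorphism $x_{i} \mapsto N_{i}$, to show that $SS_{r}[N_{1},\dots,N_{r}] \subseteq Z(\mathcal{A}_{r}(\delta))$ it suffices to show that the image of each element of some generating set of $SS_{r}[x]$ lies in the centre. By Stembridge's result (\Cref{StemSSPGens}), we have
\[ SS_{r}[x] = \langle l_{n} \mid n \geq 0 \rangle, \]
so it suffices to prove that $l_{n}(N_{1},\dots,N_{r})$ is central in $\mathcal{A}_{r}(\delta)$ for every $n \geq 0$.

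To check centrality, I would use the presentation
\[ \mathcal{A}_{r}(\delta) = \langle e_{1},\dots,e_{r-1}, s_{1},\dots,s_{\floor{r/2}-1} \rangle, \]
so that it is enough to show that $l_{n}(N_{1},\dots,N_{r})$ commutes with each $e_{i}$ and each $s_{i}$. Both of these commutation relations have already been established earlier in the section: the $e_{i}$ case is exactly \Cref{EiCommute}, and the $s_{i}$ case is exactly \Cref{CoxCommute}. Assembling these two statements together with \Cref{StemSSPGens} yields the theorem essentially as a one-line corollary.

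The main obstacle in this proof strategy is not in the assembly itself but in the underlying computations that feed into it, and in particular in the result \Cref{CoxCommute} showing that the Coxeter generators commute with $l_{n}(N_{1},\dots,N_{r})$. As noted in \Cref{JK17CoxCommuteRmk}, one cannot split the argument into separate checks on odd-indexed and even-indexed JM-elements (as was done in \cite{JK17}), because the individual symmetric polynomials in the odd or even normalised JM-elements do not commute with $s_{i}$; the cancellation property of supersymmetric polynomials is essential. That is why the proof of \Cref{CoxCommute} proceeds by manipulating the full generating function $\prod(1+N_{2i-1}t)/\prod(1-N_{2j}t)$, pulling $s_{i}$ across it using the skein-like relations of \Cref{CoxSkeinRels}, and then grouping the sixteen resulting error terms into eight cancelling pairs; a similar but lighter argument was also needed for $\sigma_{2i+1}$ in \Cref{OddSigCommute}, whose output is used in one of the pair cancellations. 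Once those propositions are granted, however, the theorem follows immediately.
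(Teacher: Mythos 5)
Your proposal is correct and matches the paper's proof exactly: reduce to the generators $l_{n}$ via \Cref{StemSSPGens}, then invoke \Cref{EiCommute} and \Cref{CoxCommute} to see that all the algebra generators $e_{1},\dots,e_{r-1},s_{1},\dots,s_{\floor{r/2}-1}$ commute with each $l_{n}(N_{1},\dots,N_{r})$. Your commentary on where the real work lies (the generating-function manipulations in \Cref{CoxCommute} and \Cref{OddSigCommute}) is also accurate.
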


\begin{proof}
The generators $e_{1},\dots,e_{r-1},s_{1},\dots,s_{\floor{r/2}-1}$ of $\mathcal{A}_{r}(\delta)$ commute with every $l_{n}(N_{1},\dots,N_{r})$ by \emph{\Cref{EiCommute}} and \emph{\Cref{CoxCommute}}, and such polynomials generate $SS_{r}[N_{1},\dots,N_{r}]$ by \emph{\Cref{StemSSPGens}}.

\end{proof}


\section{Semisimple Case}

For this section we will show that in the semisimple case the center of the partition algebra $\mathcal{A}_{2k}(\delta)$ is precisely $SS_{2k}[N_{1},\dots,N_{2k}]$. To do this we will need to utilise some of the representation theory of $\mathcal{A}_{2k}(\delta)$. We begin by setting up some notation and definitions in order to describe the branching graph $\hat{\mathcal{A}}$. This graph is to the partition algebra what Young's lattice is to the symmetric group algebra $\mathbb{C}S_{k}$. In particular, this graph gives us a means to construct a unique basis for any simple $\mathcal{A}_{2k}(\delta)$-module indexed by certain paths in $\hat{\mathcal{A}}$, called the \emph{Gelfand-Zetlin basis}. The action of the JM-elements on such a basis was first described in \cite{HR05}, and is given by the \emph{contents} of paths. Knowing this tells us how the subalgebra $SS_{2k}[N_{1},\dots,N_{2k}]$ acts on any simple module. Using this action, specifically the action of the elementary supersymmetric polynomials, and a result from linear algebra, we will be able to show the existence of a basis for $SS_{2k}[N_{1},\dots,N_{2k}]$, from which a dimension check concludes that it agrees with the center.

\subsection{Representation theory} \
\vspace{2mm}

It was shown in \cite{MS93} (see also \cite{Martin96}) that the partition algebra $\mathcal{A}_{2k}(\delta)$ is semisimple if and only if $\delta \not\in \{0,1,\dots,2k-2\}$ . Until stated otherwise, we will assume that $\mathcal{A}_{2k}(\delta)$ is semisimple. In this situation the chain of algebras
\[ \mathbb{C} = \mathcal{A}_{0}(\delta) \subset \mathcal{A}_{1}(\delta) \subset \mathcal{A}_{2}(\delta) \subset \dots \subset \mathcal{A}_{2k}(\delta), \tag{MFC} \]
is multiplicity free (see \cite[Proposition 7]{Martin00}). That is, let $M$ be a simple $\mathcal{A}_{r}(\delta)$-module for some $1\leq r\leq 2k$. Then the multiplicity of any simple $\mathcal{A}_{r-1}(\delta)$-module as a summand of Res$_{r-1}^{r}(M)$ is at most one, where Res$_{r-1}^{r}$ is the restriction functor from $\mathcal{A}_{r}(\delta)$-mod to $\mathcal{A}_{r-1}(\delta)$-mod. It is this property which allows one to construct a unique (up to scalars) basis for each simple module. We now set up the required definitions to describe the branching graph $\hat{\mathcal{A}}$.

\begin{defn} \label{PartDef}
A \emph{partition} of a positive integer $k$ is a $n$-tuple $\lambda = (\lambda_{1},\lambda_{2},\dots,\lambda_{n}) \in \mathbb{N}^{n}$, for some $n\geq 1$, such that $\lambda_{1} \geq \lambda_{2} \geq \dots \geq \lambda_{n}$ and $\sum_{i=1}^{n}\lambda_{i} = k$. We write $\lambda \vdash k$ and $|\lambda|=k$. Given such a partition we define its associated \emph{Young diagram} as the set
\[ [\lambda] := \{ (i,j) \in \mathbb{N} \times \mathbb{N} \hspace{1mm} | \hspace{1mm} \text{For each } 1\leq i \leq n, \hspace{1mm} \text{$j$ runs over }  1\leq j \leq \lambda_{i} \}. \]
We view this set as a collection of left-justified boxes, with $n$ rows and $\lambda_{i}$ boxes in the $i$-th row. We will often not distinguish between a partition $\lambda$ and its associated Young diagram $[\lambda]$. In particular, we will call an element $a=(i,j)$ of $[\lambda]$ a box, and write $a\in \lambda$. For a box $a=(i,j) \in \lambda$, we say that $i$ is the row index of $a$ and $j$ is the column index of $a$.
\end{defn}

Given a partition $\lambda = (\lambda_{1},\dots,\lambda_{n}) \vdash k$, we say that the \emph{height} of $\lambda$, written $h(\lambda)$, is the number of rows of $\lambda$ minus 1. Similarly we say that the \emph{width} of $\lambda$, written $w(\lambda)$, is the number of columns of $\lambda$ minus 1. Given a box $a = (i,j) \in \lambda$, we say the \emph{content} of $a$ is $c(a)=j-i$. We say that a box $a \in \lambda$ is \emph{removable} if $[\lambda]\backslash\{a\}$ is a Young diagram for a partition of $|\lambda|-1$. Similarly, we say a box $a \in \mathbb{N}\times \mathbb{N}$ is an addable box of $\lambda$ if $[\lambda]\cup\{a\}$ is a Young diagram of a partition of $|\lambda|+1$.

\begin{ex} \label{PartEx}
Consider the partition $\lambda = (7,5,4,3) \vdash 19$. The Young diagram $[\lambda]$, where each box $a \in \lambda$ has its contents inscribed within it, is given by
\[ \young(0123456,\mone0123,\mtwo\mone01,\mthree\mtwo\mone) \]
We have that $h(\lambda) = 3$ and $w(\lambda)=6$. The last (right-most) box in each row of $\lambda$ is a removable box, and these are all the removable boxes of $\lambda$. An example of an addable box is $(2,6)$.
\end{ex}

By convention the empty set $\emptyset$ is a partition of 0, and we set $h(\emptyset)=w(\emptyset)=0$. Now in general two boxes $a,b \in \lambda$ belong to the same diagonal (top-left to bottom-right) in the Young diagram if and only if their contents agree, that is $c(a)=c(b)$. Thus we can index the diagonals of $[\lambda]$ by content. Note that the multi-set of contents of $\lambda$ determines the partition $\lambda$. We will denote the multi-set of contents of $\lambda$ by $MC(\lambda)$. We set $MC(\emptyset)=\emptyset$. We will use superscripts to denote the multiplicity of an element in a multi-set. For example let $\lambda = (7,5,4,3)$ as in \emph{\Cref{PartEx}} above, then
\[ MC(\lambda) = \{-3,-2^{2},-1^{3},0^{3},1^{3},2^{2},3^{2},4,5,6\}.  \]
It is clear to see that given a box $a \in \lambda$, then $-h(\lambda)\leq c(a) \leq w(\lambda)$. Moreover, for any integer value $c$ where $-h(\lambda)\leq c \leq w(\lambda)$, there exists a box $a \in \lambda$ such that $c(a)=c$. Hence the set $\{-h(\lambda),\dots,w(\lambda)\}$ is precisely the indexing set for the diagonals of $[\lambda]$ via content. It will be helpful to express the information of the multi-set $MC(\lambda)$ in the following manner.

\begin{defn} \label{DiaDatumDef}
Let $k$ be a positive integer and $\lambda = (\lambda_{1},\dots,\lambda_{n}) \vdash k$. We define the \emph{diagonal datum} of $\lambda$ to be the pair $(D(\lambda),m_{\lambda})$ where $D(\lambda)=\{-h(\lambda),\dots,w(\lambda)\}$ and $m_{\lambda}:D(\lambda)\rightarrow \mathbb{N}$, given by $m_{\lambda}(x) = |\{ a \in \lambda \hspace{1mm} | \hspace{1mm} c(a) = x \}|$. We set $D(\emptyset)=\emptyset$, and $m_{\emptyset}$ the empty function.
\end{defn}
\noindent
We may now define the branching graph, where we adopt the notation used within \cite{Eny13}. Let $\Lambda_{k}$ denote the set of all partition $\lambda \vdash k$. Then we set
\[ \hat{\mathcal{A}}_{2k} = \hat{\mathcal{A}}_{2k+1} = \{ (\lambda,l) \hspace{2mm} | \hspace{2mm} \lambda \in \Lambda_{k-l} \text{ for some } l = 0,1,\dots,k \}. \]
That is, given $(\lambda,l) \in \hat{\mathcal{A}}_{2k}$, then $\lambda$ is a partition of $k-l$ for some $0\leq l \leq k$. Knowing $k$ and $\lambda$ one could recover $l$ by $l=k-|\lambda|$. With this in mind, when it is clear we are working in $\hat{\mathcal{A}}_{2k}= \hat{\mathcal{A}}_{2k+1}$, we will occasionally suppress the $l$ in $(\lambda,l)$ and just write $\lambda$.

\begin{defn} \label{BranchGrDef}
The branching graph $\hat{\mathcal{A}}$ is the directed graph with
\begin{itemize}
\item[(1)] vertices on level $i$ are given by the set $\hat{\mathcal{A}_{i}}$ for each $i \geq 0$,
\item[(2)] if $i$ is even, then an edge $(\lambda,l) \rightarrow (\mu,m)$ exists for $(\lambda,l) \in \hat{\mathcal{A}}_{i}$ and $(\mu,m) \in \hat{\mathcal{A}}_{i+1}$ if either
\begin{itemize}
\item[(a)] $(\lambda,l)=(\mu,m)$,
\item[(b)] $m=l+1$ and $\mu$ is obtained from $\lambda$ by removing a box,
\end{itemize}
\item[(3)] if $i$ is odd, then an edge $(\lambda,l) \rightarrow (\mu,m)$ exists for $(\lambda,l) \in \hat{\mathcal{A}}_{i}$ and $(\mu,m) \in \hat{\mathcal{A}}_{i+1}$ if either
\begin{itemize}
\item[(a)] $(\lambda,l+1)=(\mu,m)$,
\item[(b)] $m=l$ and $\mu$ is obtained from $\lambda$ by adding a box.
\end{itemize}
\end{itemize}
\end{defn}

The first 7 levels of $\hat{\mathcal{A}}$ are given in Figure 1 below.

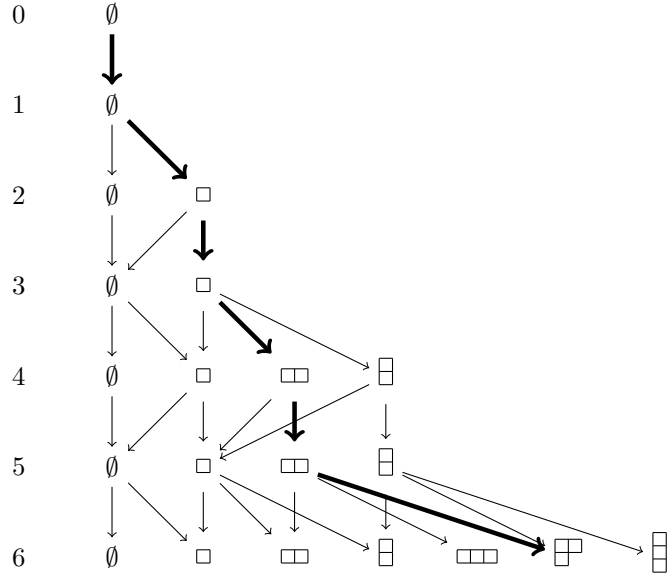
\begin{figure}[h]
\[ \begin{matrix}\begin{tikzpicture}[scale=1.2]
	\node[label=left:{$0 \hspace{8mm}$}] (01) at (0,1){$\emptyset$};
	
    \node[label=left:{$1 \hspace{8mm}$}] (11) at (0,0){$\emptyset$};	
	
	\node[label=left:{$2 \hspace{8mm}$}] (21) at (0,-1){$\emptyset$};
	\node (22) at (1,-1){$\PART{1}$};
	
	\node[label=left:{$3 \hspace{8mm}$}] (31) at (0,-2){$\emptyset$};
	\node (32) at (1,-2){$\PART{1}$};
	
	\node[label=left:{$4 \hspace{8mm}$}] (41) at (0,-3){$\emptyset$};
	\node (42) at (1,-3){$\PART{1}$};
	\node (43) at (2,-3){$\PART{2}$};
	\node (44) at (3,-3){$\PART{1,1}$};
	
	\node[label=left:{$5 \hspace{8mm}$}] (51) at (0,-4){$\emptyset$};
	\node (52) at (1,-4){$\PART{1}$};
	\node (53) at (2,-4){$\PART{2}$};
	\node (54) at (3,-4){$\PART{1,1}$};
	
	\node[label=left:{$6 \hspace{8mm}$}] (61) at (0,-5){$\emptyset$};
	\node (62) at (1,-5){$\PART{1}$};
	\node (63) at (2,-5){$\PART{2}$};
	\node (64) at (3,-5){$\PART{1,1}$};
	\node (65) at (4,-5){$\PART{3}$};
	\node (66) at (5,-5){$\PART{2,1}$};
	\node (67) at (6,-5){$\PART{1,1,1}$};
	
	\draw[->, line width=0.6mm] (01)--(11);
	
	\draw[->] (11)--(21); \draw[->, line width=0.6mm] (11)--(22);
	
	\draw[->] (21)--(31); \draw[->] (22)--(31); \draw[->, line width=0.6mm] (22)--(32);
	
	\draw[->] (31)--(41); \draw[->] (31)--(42); \draw[->] (32)--(42); \draw[->, line width=0.6mm] (32)--(43); \draw[->] (32)--(44);
	
	\draw[->] (41)--(51); \draw[->] (42)--(51); \draw[->] (42)--(52); \draw[->] (43)--(52); \draw[->, line width=0.6mm] (43)--(53); \draw[->] (44)--(52); \draw[->] (44)--(54);
	
	\draw[->] (51)--(61); \draw[->] (51)--(62); \draw[->] (52)--(62); \draw[->] (52)--(63); \draw[->] (52)--(64); \draw[->] (53)--(63); \draw[->] (53)--(65); \draw[->, line width=0.6mm] (53)--(66); \draw[->] (54)--(64); \draw[->] (54)--(66); \draw[->] (54)--(67);
	
	\end{tikzpicture}\end{matrix} \]
	\caption{ The branching graph $\hat{\mathcal{A}}$ truncated at level 6. The path $T^{((2,1),0)}$ has been expressed in bold.}
\end{figure} 

Hence going from an even level to an odd level, one can either ``do nothing'' or remove a box, while going from an odd level to and even level, one can do nothing or add a box. A path in $\hat{\mathcal{A}}$ is defined in the natural way one would for a directed graph. We will be interested in paths which start at level 0 and reach a given vertex $(\lambda,l)$.

\begin{defn} \label{PathDef}
Let $(\lambda,l) \in \hat{\mathcal{A}}_{r}$, then a path $T$ in $\hat{\mathcal{A}}$ from $\emptyset \in \hat{\mathcal{A}}_{0}$ to $(\lambda,l) \in \hat{\mathcal{A}}_{r}$ is a sequence
\[ T = (\lambda^{(0)},l_{0})\rightarrow(\lambda^{(1)},l_{1})\rightarrow \dots \rightarrow (\lambda^{(r-1)},l_{r-1}) \rightarrow (\lambda^{(r)},l_{r}), \]
where $(\lambda^{(i)},l_{i}) \in \hat{\mathcal{A}}_{i}$, $(\lambda^{(i)},l_{i}) \rightarrow (\lambda^{(i)},l_{i+1})$ is an edge in $\hat{\mathcal{A}}$, and $(\lambda^{(r)},l_{r})=(\lambda,l)$. We write Sh$(T)=(\lambda,l)$, and will alternatively write $T=((\lambda^{(i)},l_{i}))_{i=0}^{r}$. Also we will denote by $\mathsf{Path}(\lambda,l)$ the set of paths $T$ for which Sh$(T)=(\lambda,l)$.
\end{defn}

Truncating the graph $\hat{\mathcal{A}}$ at level $2k$ yields the branching graph associated with the multiplicity free chain (MFC) described above. We summarise what this means in the following theorem (see \cite[Theorem 2.24]{HR05} and \cite[Proposition 7]{Martin00}):

\begin{thm} \label{BranchingThm}
Assume that $\mathcal{A}_{2k}(\delta)$ is semisimple. Then for all $r\leq 2k$:
\begin{itemize}
\item[(1)] The vertices $\hat{\mathcal{A}}_{r}$ on the $r$-th level of $\hat{\mathcal{A}}$ give an indexing set for the simple $\mathcal{A}_{r}(\delta)$-modules. We will denote by $\Delta_{r}^{(\lambda,l)}$ the simple module corresponding to $(\lambda,l) \in \hat{\mathcal{A}}_{r}$.
\item[(2)] For each $(\lambda,l) \in \hat{\mathcal{A}}_{r}$, we have
\[ \emph{Res}_{r-1}^{r}(\Delta_{r}^{(\lambda,l)}) = \bigoplus_{(\mu,m) \rightarrow (\lambda,l)}\Delta_{r-1}^{(\mu,m)}, \]
where the sum runs over all edges $(\mu,m) \rightarrow (\lambda,l)$ in $\hat{\mathcal{A}}$.
\item[(3)] For each $(\lambda,l) \in \hat{\mathcal{A}}_{r}$ we have \emph{dim}$(\Delta_{r}^{(\lambda,l)}) = |\mathsf{Path}(\lambda,l)|$.
\end{itemize}
\end{thm}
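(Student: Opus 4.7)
My plan is to establish all three parts simultaneously by induction on $r$, using the ideal-filtration structure of the partition algebra originally analysed by Martin. The base case $r=0$ is immediate: $\mathcal{A}_0(\delta)=\mathbb{C}$ has a unique simple module, and $\hat{\mathcal{A}}_0$ consists of the single vertex $\emptyset$.

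For the classification in (1), the key input is Martin's ideal filtration $\mathcal{A}_{2k}(\delta) = J_0 \supseteq J_1 \supseteq \cdots \supseteq J_k \supseteq 0$, where $J_l$ is spanned by diagrams with at most $k-l$ propagating blocks. Each quotient $J_l/J_{l+1}$ carries a natural $(\mathcal{A}_{2k}(\delta),\mathcal{A}_{2k}(\delta))$-bimodule structure that, in the semisimple case, realises Morita equivalence with $\mathbb{C}S_{k-l}$: the blocks are indexed by partitions $\lambda \vdash k-l$, yielding the required bijection between simples and pairs $(\lambda,l)\in\hat{\mathcal{A}}_{2k}$. For the odd-indexed case, I would use that $\mathcal{A}_{2k+1}(\delta)$ is the span of diagrams with $k$ and $k'$ in the same block; an analogous filtration (or, equivalently, the observation that the idempotent $\frac{1}{\delta}e_{2k+1}$ induces a Morita-style reduction from $\mathcal{A}_{2k+2}(\delta)$) gives $\hat{\mathcal{A}}_{2k+1}=\hat{\mathcal{A}}_{2k}$.

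For the branching rules in (2), I would construct the simples $\Delta_r^{(\lambda,l)}$ explicitly as the heads of cell modules built from Specht modules of $S_{k-l}$ combined with a fixed choice of ``propagating pattern''. Restricting from $\mathcal{A}_{2k}(\delta)$ down to $\mathcal{A}_{2k-1}(\delta)$ splits according to whether the last propagating strand in the cell module remains attached to a bottom vertex (contributing a summand with unchanged shape) or is broken off (contributing a shape with one box removed, via the classical $S_{k-l}\downarrow S_{k-l-1}$ rule). The reverse direction, from $\mathcal{A}_{2k-1}(\delta)$ to $\mathcal{A}_{2k-2}(\delta)$, is analysed dually using $e_{2k-2}$ to either keep the strand trivial or reattach it, giving the add-a-box edges. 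Part (3) then follows by induction: by (2),
\[ \dim\Delta_r^{(\lambda,l)} = \sum_{(\mu,m)\to(\lambda,l)} \dim\Delta_{r-1}^{(\mu,m)} = \sum_{(\mu,m)\to(\lambda,l)} |\mathsf{Path}(\mu,m)| = |\mathsf{Path}(\lambda,l)|, \]
since every path into $(\lambda,l)$ decomposes uniquely through its direct predecessor.

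The main obstacle will be carrying out the Morita equivalence in part (1) with enough control over how the idempotents $e_{2j-1}$ and $e_{2j}$ act to actually produce the branching rules of (2). Specifically, proving the multiplicity-freeness of the chain and the exact form of the edges requires one to show that no unexpected summands appear in the restriction beyond the box-add/remove or the identity moves: this is where Martin's semisimplicity criterion (excluding $\delta\in\{0,1,\dots,2k-2\}$) is essential, since it guarantees that the cellular structure splits cleanly and that no cell module of a higher filtration layer bleeds down into the restriction of a simple in a lower layer.
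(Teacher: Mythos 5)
The paper does not prove this theorem at all: it is quoted directly from \cite[Theorem 2.24]{HR05} and \cite[Proposition 7]{Martin00}, so there is no in-paper argument to compare against. Your outline is essentially a reconstruction of how those sources establish the result, and it is structurally sound: the filtration of $\mathcal{A}_{2k}(\delta)$ by propagating number, with each layer contributing (in the semisimple case) simples indexed by partitions of $k-l$, together with the idempotent localisation $e\,\mathcal{A}_{2k}(\delta)\,e \cong \mathcal{A}_{2k-2}(\delta)$ for $e = \frac{1}{\delta}e_{2k-1}$, is exactly the mechanism Martin and Halverson--Ram use to index the simples and to interleave the odd algebras; and your deduction of (3) from (1) and (2) by induction on $r$ is correct and is how the dimension formula is always obtained. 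The one place your plan is genuinely incomplete is the one you flag yourself: part (2). Saying the restriction splits "according to whether the last propagating strand remains attached or is broken off" is a heuristic, not a proof; the actual verification that the restriction of $\Delta_{2k}^{(\lambda,l)}$ to $\mathcal{A}_{2k-1}(\delta)$ contributes exactly the identity edge plus the remove-a-box edges (and, dually, the add-a-box edges for $\mathcal{A}_{2k-1}(\delta) \supset \mathcal{A}_{2k-2}(\delta)$), each with multiplicity one and nothing else, requires a Frobenius-reciprocity computation with the globalisation/localisation functors attached to $e$, combined with the classical $S_{k-l}\downarrow S_{k-l-1}$ branching rule. That computation is the substance of Martin's Proposition 7, and without it your induction has no base to run on. Since the result is standard background here, citing it (as the paper does) or carrying out that localisation argument in full are the two honest options; the sketch as written sits in between.
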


It will be helpful to pick out a particular path in each $\mathsf{Path}(\lambda,l)$ to work with in later sections. The definition below is precisely the path described in \cite[Lemma 3.9]{Eny13}.

\begin{defn} \label{StPathDef}
Let $(\lambda,l) \in \hat{\mathcal{A}}_{r}$. We define the \emph{standard path} $T^{(\lambda,l)} = ((\lambda^{(i)},l_{i}))_{i=0}^{r} \in \mathsf{Path}(\lambda,l)$ to be the path described as follows:
\begin{itemize}
\item[(i)] $(\lambda^{(2i)},l_{2i}) = (\lambda^{(2i+1)},l_{2i+1})$ for all $0\leq i \leq \ceil{r/2}-1$.
\item[(ii)] $(\lambda^{(2i)},l_{2i}) = (\emptyset,i)$ for all $0 \leq i \leq l$.
\item[(iii)] $(\lambda^{(2i+2)},l_{2i+2})=(\lambda^{(2i)}\cup\{a\},l_{2i})$ for all $l\leq i \leq \floor{r/2}-1$, where $a$ is an addable box of $\lambda^{(2i)}$ with minimal row index.
\end{itemize}
\end{defn} 

Intuitively the path $T^{(\lambda,l)}$ is the one which never removes any boxes, only adds a box when it is forced to, and it priorities adding boxes in the highest row, i.e. lowest row index. In \emph{Figure 1} the path $T^{(\lambda,l)}$ for $(\lambda,l)=((2,1),0) \in \hat{\mathcal{A}}_{6}$ has been given in bold. In general the path $T^{(\lambda,l)}$ is maximal with respect to a particular ordering on the paths $\mathsf{Path}(\lambda,l)$ (see \cite[Definition 3.8]{Eny13}), however we will not employ such a fact here. For us, this path will be useful since it is easy to calculate its \emph{contents}, defined as follows.

\begin{defn} \label{PathContDef}
Let $(\lambda,l) \in \hat{\mathcal{A}_{r}}$ and $T=((\lambda^{(i)},l_{i}))_{i=0}^{r} \in \mathsf{Path}(\lambda,l)$. Then the \emph{contents} of $T$ is the $r$-tuple $(\text{cont}(T,i))_{i=1}^{r}$ defined as follows: For $i$ even we have
\[ \text{cont}(T,i) = \begin{cases}
                \frac{\delta}{2} - |\lambda^{(i)}| & if \lambda^{(i)} = \lambda^{(i-1)} \\
                c(a) - \frac{\delta}{2} & if \lambda^{(i)}=\lambda^{(i-1)}\cup\{a\}
            \end{cases}.  \]
For $i$ odd we have
\[ \text{cont}(T,i) = \begin{cases}
                |\lambda^{(i)}| - \frac{\delta}{2} & if \lambda^{(i)} = \lambda^{(i-1)} \\
                \frac{\delta}{2} - c(a) & if \lambda^{(i)}=\lambda^{(i-1)}\backslash \{a\}
            \end{cases}.  \]
\end{defn}

The next lemma follows from the definition of the standard path. 

\begin{lem} \label{StPathCont}
Let $(\lambda,l) \in \hat{\mathcal{A}}_{2k}$ and $T^{(\lambda,l)}=((\lambda^{(i)},l_{i}))_{i=0}^{2k}$ be the standard path in $\mathsf{Path}(\lambda,l)$. Then as multisets,
\[ \{\emph{cont}(T^{(\lambda,l)},2i+1)\}_{i=0}^{k-1} = \left\{\left(-\frac{\delta}{2}\right)^{l},i-\frac{\delta}{2}\right\}_{i=0}^{k-l-1}, \]
\[ \{\emph{cont}(T^{(\lambda,l)},2i)\}_{i=1}^{k} = \left\{ \left(\frac{\delta}{2}\right)^{l}, c(a) - \frac{\delta}{2} \right\}_{a \in \lambda} = \left\{ \left(\frac{\delta}{2}\right)^{l}, \left(j-\frac{\delta}{2}\right)^{m_{\lambda}(j)}\right\}_{j \in D(\lambda)}, \]
where the superscript denote multiplicity, and $(D(\lambda),m_{\lambda})$ is the diagonal datum of $\lambda$.
\end{lem}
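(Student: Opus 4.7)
The plan is to unfold the definition of the standard path $T^{(\lambda,l)}$ step by step, then substitute directly into the content formula of \emph{\Cref{PathContDef}} and verify each multiset equality.

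First I would determine the shape of the path $T^{(\lambda,l)} = ((\lambda^{(i)},l_i))_{i=0}^{2k}$. By condition $(i)$ of \emph{\Cref{StPathDef}}, every even-to-odd transition is trivial, so $\lambda^{(2j+1)} = \lambda^{(2j)}$ and $l_{2j+1} = l_{2j}$ for all valid $j$. By $(ii)$, for $0 \le j \le l$ we have $\lambda^{(2j)} = \emptyset$ and $l_{2j} = j$. By $(iii)$, for $l \le j \le k-1$, the partition $\lambda^{(2j+2)}$ is obtained from $\lambda^{(2j)}$ by adding a single addable box (with $l_{2j+2} = l$), so $|\lambda^{(2j)}| = j - l$ for $j \ge l$. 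Since $T^{(\lambda,l)} \in \mathsf{Path}(\lambda,l)$, the path ends at $\lambda^{(2k)} = \lambda$, which means the $k-l$ boxes added at steps $2l+2, 2l+4, \dots, 2k$ collectively constitute $\lambda$. Call these boxes $a_1, a_2, \dots, a_{k-l}$; then the multiset $\{c(a_i)\}_{i=1}^{k-l}$ equals $\{c(a) : a \in \lambda\}$, which by \emph{\Cref{DiaDatumDef}} is $\{j^{m_\lambda(j)}\}_{j \in D(\lambda)}$, independently of the order of addition.

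For the odd-indexed contents, since $\lambda^{(2j+1)} = \lambda^{(2j)}$, the first case of \emph{\Cref{PathContDef}} for odd indices applies and gives
\[ \mathrm{cont}(T^{(\lambda,l)}, 2j+1) = |\lambda^{(2j+1)}| - \tfrac{\delta}{2} = |\lambda^{(2j)}| - \tfrac{\delta}{2}. \]
For $0 \le j \le l$ this yields $-\delta/2$, contributing $l+1$ copies, and for $l+1 \le j \le k-1$ it yields $(j-l) - \delta/2$, contributing $\{1-\delta/2, 2-\delta/2, \dots, (k-l-1)-\delta/2\}$. Peeling off one of the $-\delta/2$'s as the $i=0$ term of $\{i - \delta/2\}_{i=0}^{k-l-1}$ and leaving the remaining $l$ copies as the explicit $(-\delta/2)^l$ gives the first claimed multiset identity.

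For the even-indexed contents I look at the transition $\lambda^{(2j-1)} \to \lambda^{(2j)}$. For $1 \le j \le l$ both partitions are empty, so the first case of \emph{\Cref{PathContDef}} for even indices gives $\delta/2 - 0 = \delta/2$, contributing $l$ copies. For $l+1 \le j \le k$ we have $\lambda^{(2j)} = \lambda^{(2j-1)} \cup \{a_{j-l}\}$, so the second case gives $c(a_{j-l}) - \delta/2$; as $j$ ranges over $l+1, \dots, k$ the $a_{j-l}$ exhaust all boxes of $\lambda$, yielding $\{c(a) - \delta/2 : a \in \lambda\} = \{(j - \delta/2)^{m_\lambda(j)}\}_{j \in D(\lambda)}$. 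Combining with the $l$ copies of $\delta/2$ gives the second claimed identity.

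No step is a real obstacle: the lemma is a bookkeeping consequence of the two definitions. The only substantive point is that the ordered sequence of boxes added along the standard path must assemble $\lambda$ (guaranteed by $T^{(\lambda,l)} \in \mathsf{Path}(\lambda,l)$), so although the minimal-row-index rule fixes the order, that order is irrelevant for the resulting multiset of contents.
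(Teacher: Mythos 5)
Your proof is correct and is exactly the argument the paper intends: the paper gives no proof at all, simply asserting the lemma "follows from the definition of the standard path," and your write-up is the straightforward unfolding of \emph{\Cref{StPathDef}} and \emph{\Cref{PathContDef}} that this assertion presupposes. (The only nitpick is the count "$l+1$ copies for $0\le j\le l$" in the odd case, which silently assumes $l\le k-1$; when $l=k$ the range is truncated to $j\le k-1$ and both sides degenerate to $l$ copies of $-\delta/2$, so the identity still holds.)
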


We now define the \emph{Gelfand-Zetlin basis} for any simple $\mathcal{A}_{r}(\delta)$-module $\Delta_{r}^{(\lambda,l)}$ with $(\lambda,l) \in \hat{\mathcal{A}}_{r}$. From \emph{\Cref{BranchingThm}} (2) we have the canonical decomposition
\[ \text{Res}_{r-1}^{r}(\Delta_{r}^{(\lambda,l)}) = \bigoplus_{(\mu,m) \rightarrow (\lambda,l)}\Delta_{r-1}^{(\mu,m)}, \]
where the sum runs over all edges $(\mu,m) \rightarrow (\lambda,l)$ in $\hat{\mathcal{A}}$. If we iterate this process on the above summands, and continue as such down to $\mathcal{A}_{0}(\delta)$, then we obtain a unique decomposition of the simple $\mathcal{A}_{r}(\delta)$-module $\Delta_{r}^{(\lambda,l)}$ into simple $\mathcal{A}_{0}$-modules (i.e. 1-dimensional $\mathbb{C}$-vector spaces) indexed by paths in $\hat{\mathcal{A}}$ from $\emptyset \in \hat{\mathcal{A}}_{0}$ to $(\lambda,l) \in \hat{\mathcal{A}}_{k}$. That is
\[ \text{Res}_{0}^{r}(\Delta_{r}^{(\lambda,l)}) = \bigoplus_{T \in \mathsf{Path}(\lambda,l)}V_{T}, \]
Where Res$_{0}^{r} = \text{Res}_{0}^{1} \circ \text{Res}_{1}^{2} \circ \dots \circ \text{Res}_{r-1}^{r}$. Picking a vector $v_{T} \in V_{T}$ for each $T \in \mathsf{Path}(\lambda,l)$ gives a unique (up to scalars) basis $\{v_{T} \hspace{1mm} | \hspace{1mm} T \in \mathsf{Path}(\lambda,l)\}$ for the simple $\mathcal{A}_{r}(\delta)$-module $\Delta_{r}^{(\lambda,l)}$. We refer to such a basis as the \emph{Gelfand-Zetlin basis}, or GZ-basis for short. Lastly, Halverson and Ram shown that the GZ-basis of a given simple module $\Delta_{r}^{(\lambda,l)}$ diagonalises the action of the Jucys-Murphy elements, and they gave a description of this action.

\begin{thm}[Theorem 3.37 (b) of \cite{HR05}] \label{HRJMAction}
Assume $\mathcal{A}_{r}(\delta)$ is semisimple. Let $\{v_{T} \hspace{1mm} | \hspace{1mm} T \in \mathsf{Path}(\lambda,l)\}$ be a GZ-basis for the simple $\mathcal{A}_{r}(\delta)$-module $\Delta_{r}^{(\lambda,l)}$ indexed by $(\lambda,l) \in \hat{\mathcal{A}}_{r}$. Then
\[ N_{i}v_{T} = \emph{cont}(T,i)v_{T}. \]
\end{thm}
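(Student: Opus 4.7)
My plan separates a qualitative diagonalisation from a quantitative eigenvalue computation. Since $N_i = L_i - \delta/2$, it suffices to verify that $L_i v_T = (\text{cont}(T,i) + \delta/2)\, v_T$. The diagonalisation comes for free from the multiplicity-free property of the chain: by construction of the GZ-basis, $v_T$ lies in the unique summand of $\text{Res}^{\mathcal{A}_i(\delta)}_{\mathcal{A}_{i-1}(\delta)}\Delta_i^{(\lambda^{(i)},l_i)}$ isomorphic to $\Delta_{i-1}^{(\lambda^{(i-1)},l_{i-1})}$, and this summand is simple by \Cref{BranchingThm}(2). Since $L_i \in \mathcal{A}_i(\delta)$ centralises $\mathcal{A}_{i-1}(\delta)$ by \Cref{EnyRels}(2)(iv), Schur's lemma forces $L_i$ to act on it by a scalar $\alpha_i(T)$ depending only on the edge $(\lambda^{(i-1)},l_{i-1}) \to (\lambda^{(i)},l_i)$ at level $i$ in $\hat{\mathcal{A}}$, not on the rest of $T$.

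To identify $\alpha_i(T)$ with $\text{cont}(T,i) + \delta/2$, I would induct on $i$ using the recursive formulas of \Cref{EnyJMDef}. For an edge $(\mu,m) \to (\lambda,l)$ at level $i = 2j+2$, one expresses $L_{2j+2}$ via \Cref{EnyJMDef} and applies it to a GZ-vector $v_T$ sitting in the $(\mu,m)$-summand; the mixed relations \Cref{EnyRels}(3)(iii)--(iv) convert the $\sigma$-terms into $L$-terms, \Cref{AntiSymmRels} turns $e$-terms into scalar multiples of smaller $N_j$-eigenvalues, and the inductive hypothesis supplies those earlier eigenvalues. The edge-locality of $\alpha_i$ allows substitution of any convenient path through a given edge; the standard path $T^{(\lambda,l)}$ of \Cref{StPathDef} serves as a canonical choice whose contents are tabulated in closed form by \Cref{StPathCont}. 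As a global consistency check, the central element $z_r = \sum_{j=1}^r L_j$ of \Cref{EnyRels}(2)(iii) acts on $\Delta_r^{(\lambda,l)}$ by a single scalar $\chi_r(\lambda,l)$, so the telescoping sum $\sum_i \alpha_i(T)$ must equal $\sum_i \text{cont}(T,i) + r\delta/2$ for every $T \in \mathsf{Path}(\lambda,l)$; this provides a useful cross-verification in each case.

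The main obstacle is the case analysis: \Cref{PathContDef} has four branches (even vs.~odd $i$, combined with box-move vs.~do-nothing). Each case invokes a different combination of relations from \Cref{EnyRels}. The box-move cases require extracting the content $c(a)$ of the affected box from the $\sigma$-to-$L$ recursion, which is precisely where the combinatorics of Young diagrams is tied to the algebraic recursion. The do-nothing cases produce the shift $\mp |\lambda^{(i)}| \pm \delta/2$ via \Cref{AntiSymmRels} applied to the relevant $e_{i-1}$-factor, and this is the subtler half of the verification. Once the identity holds for one path through each edge, edge-locality propagates it to all paths and completes the proof.
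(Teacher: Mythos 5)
This theorem is not proved in the paper at all: it is quoted verbatim from Halverson--Ram \cite{HR05} (Theorem 3.37(b)), whose own proof realises $\mathcal{A}_{2k}(n)$ inside $\mathrm{End}(V^{\otimes k})$ for $n\gg 0$ via Schur--Weyl duality, computes the eigenvalues there, and interpolates polynomially in $\delta$. So your proposal should be judged on its own merits, and it has a genuine gap. The first half is fine and complete: since $L_i\in Z(\mathcal{A}_{i-1}(\delta),\mathcal{A}_i(\delta))$ and the chain is multiplicity-free, Schur's lemma forces $L_i$ to act on each canonical $\mathcal{A}_{i-1}(\delta)$-summand by a scalar $\alpha_i$ depending only on the edge $(\lambda^{(i-1)},l_{i-1})\to(\lambda^{(i)},l_i)$. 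That correctly reduces the theorem to evaluating $\alpha_i$ on one path per edge.

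The second half, which is where the entire content of the theorem lives, is a plan rather than a proof, and the plan as stated does not go through. You propose to ``apply'' Enyang's recursion for $L_{2i+2}$ (resp.\ $L_{2i+1}$) to $v_T$ and read off the eigenvalue, but the individual terms of that recursion involve $s_i$, $e_{2i}$, $e_{2i\pm 1}$ and $\sigma_{2i\pm 1}$, none of which act diagonally on the GZ-basis: they permute and mix the summands at levels $i-1$ through $i+1$. Their matrix entries in the GZ-basis constitute the seminormal form of $\mathcal{A}_r(\delta)$, and determining those entries is a computation of essentially the same depth as the theorem itself (this is the subject of \cite{Eny13}). \emph{\Cref{EnyRels}}(3)(iii)--(iv) and \emph{\Cref{AntiSymmRels}} rewrite products such as $\sigma_{2i}e_{2i-1}e_{2i}$ or $e_{2i}(L_{2i}+L_{2i+1})$, but they do not tell you how $e_{2i}$ alone distributes $v_T$ over the summands, which is what the ``do-nothing'' versus ``box-move'' dichotomy requires. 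Finally, the consistency check via the central element $\sum_j L_j$ only constrains $\sum_i\alpha_i(T)$, not the individual $\alpha_i$, so it cannot substitute for the missing computation. A workable completion would either follow \cite{HR05} (explicit action on $V^{\otimes k}$ for $\delta=n$ integral, then interpolation) or \cite{Eny13} (upper-triangular action of the $N_i$ on a Murphy-type basis of the cell modules with prescribed diagonal entries); your edge-locality observation is compatible with either but does not by itself supply the eigenvalues.
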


\subsection{The Center} \
\vspace{2mm}

We will now prove that whenever $\mathcal{A}_{2k}(\delta)$ is semisimple we have that $SS_{2k}[N_{1},\dots,N_{2k}]=Z(\mathcal{A}_{2k}(\delta))$. This will be done by employing a result from linear algebra (\emph{\Cref{JKLinIndPolys}} below) from which one can implicitly give a collection of linearly independent elements of $SS_{2k}[N_{1},\dots,N_{2k}]$. The number of such elements will be $|\hat{\mathcal{A}}_{2k}|$, the number of isomorphism classes of simple $\mathcal{A}_{2k}(\delta)$-modules, which in the semisimple case agrees with the dimension of the center, hence showing equivalence. To begin we establish some small results.

\begin{lem} \label{EvalPathEquiv}
Assume $\mathcal{A}_{2k}(\delta)$ is semisimple. Let $(\lambda,l) \in \hat{\mathcal{A}}_{2k}$ and $T,S \in \mathsf{Path}(\lambda,l)$. Then for any supersymmetric polynomial $p \in SS_{2k}[x]$, we have that
\[ p\left(\emph{cont}(T,1),\dots,\emph{cont}(T,2k)\right) = p\left(\emph{cont}(S,1),\dots,\emph{cont}(S,2k)\right). \]
\end{lem}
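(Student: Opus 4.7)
The plan is to exploit the fact, established in \Cref{SSPCentral}, that $p(N_{1},\dots,N_{2k})$ is central in $\mathcal{A}_{2k}(\delta)$ whenever $p \in SS_{2k}[x]$, together with Schur's lemma on the simple module $\Delta_{2k}^{(\lambda,l)}$. The action of the central element $p(N_{1},\dots,N_{2k})$ on $\Delta_{2k}^{(\lambda,l)}$ must be scalar, and evaluating this scalar on different GZ-basis vectors will yield the desired equality.

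First I would pass from the abstract statement about polynomials in scalars to an operator identity inside $\mathrm{End}(\Delta_{2k}^{(\lambda,l)})$. Let $\{v_{T}\mid T \in \mathsf{Path}(\lambda,l)\}$ be the GZ-basis of $\Delta_{2k}^{(\lambda,l)}$. By \Cref{HRJMAction}, each $N_{i}$ acts diagonally on this basis with $N_{i}v_{T} = \mathrm{cont}(T,i) v_{T}$. Since the $N_{i}$ pairwise commute and each $v_{T}$ is a simultaneous eigenvector, any polynomial expression $p(N_{1},\dots,N_{2k})$ also acts diagonally, with
\[ p(N_{1},\dots,N_{2k})\, v_{T} = p\bigl(\mathrm{cont}(T,1),\dots,\mathrm{cont}(T,2k)\bigr) v_{T}. \]

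Next I would invoke centrality and simplicity. By \Cref{SSPCentral}, the element $p(N_{1},\dots,N_{2k})$ lies in $Z(\mathcal{A}_{2k}(\delta))$. Since $\mathcal{A}_{2k}(\delta)$ is semisimple (this is where the standing assumption of this section is needed, so that $\Delta_{2k}^{(\lambda,l)}$ is absolutely simple over $\mathbb{C}$) Schur's lemma forces $p(N_{1},\dots,N_{2k})$ to act on $\Delta_{2k}^{(\lambda,l)}$ as a single scalar $c_{\lambda,l} \in \mathbb{C}$. Comparing the two expressions for this action on the basis vector $v_{T}$ yields $p(\mathrm{cont}(T,1),\dots,\mathrm{cont}(T,2k)) = c_{\lambda,l}$, and the same equality with $T$ replaced by $S$ gives $p(\mathrm{cont}(S,1),\dots,\mathrm{cont}(S,2k)) = c_{\lambda,l}$. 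Equating these proves the lemma.

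There is no real obstacle here: the technical work has already been done in \Cref{SSPCentral} and in the description of the JM-action in \Cref{HRJMAction}, and the conclusion is a direct application of Schur's lemma. The only mild subtlety is ensuring that the GZ-basis simultaneously diagonalises \emph{all} the $N_{i}$ (not just one at a time), but this is built into the construction of the GZ-basis via the multiplicity-free chain, and is exactly the content of \Cref{HRJMAction}.
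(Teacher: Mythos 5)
Your proposal is correct and follows essentially the same route as the paper: apply \Cref{HRJMAction} to see that $p(N_{1},\dots,N_{2k})$ acts diagonally on the GZ-basis with eigenvalues given by evaluating $p$ at the contents, then use \Cref{SSPCentral} together with Schur's lemma to conclude the central element acts by a single scalar on the simple module $\Delta_{2k}^{(\lambda,l)}$, forcing the two evaluations to coincide. No gaps.
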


\begin{proof}
Let $\Delta_{2k}^{(\lambda,l)}$ be the simple $\mathcal{A}_{2k}(\delta)$-module indexed by $(\lambda,l)$, and let $\{v_{T} \hspace{1mm} | \hspace{1mm} T \in \mathsf{Path}(\lambda,l)\}$ be a GZ-basis for $\Delta_{2k}^{(\lambda,l)}$. From \emph{\Cref{HRJMAction}} we have that
\begin{align*}
p(N_{1},\dots,N_{2k})v_{T} &= p(\text{cont}(T,1),\dots,\text{cont}(T,2k))v_{T}, \\
p(N_{1},\dots,N_{2k})v_{S} &= p(\text{cont}(S,1),\dots,\text{cont}(S,2k))v_{S}.
\end{align*}
Now by Schur's lemma we know that
\[ \text{End}_{\mathcal{A}_{2k}}\left(\Delta_{2k}^{(\lambda,l)}\right) \cong \mathbb{C} \text{id}_{(\lambda,l)}, \]
where $\text{id}_{(\lambda,l)}$ is the identity $\mathcal{A}_{2k}(\delta)$-endomorphism of $\Delta_{2k}^{(\lambda,l)}$. Let $z \in Z\left(\mathcal{A}_{2k}(\delta)\right)$, then $zv = \lambda_{z}v$ for some $\lambda_{z} \in \mathbb{C}$ and all $v \in \Delta_{2k}^{(\lambda,l)}$. By \emph{\Cref{SSPCentral}} we know that $p(N_{1},\dots,N_{2k}) \in Z\left(\mathcal{A}_{2k}(\delta)\right)$, and so $p(N_{1},\dots,N_{2k})$ acts on $\Delta_{2k}^{(\lambda,l)}$ by some constant. In particular, it acts on both $v_{T}$ and $v_{S}$ by the same constant, and so
\[ p\left(\text{cont}(T,1),\dots,\text{cont}(T,2k)\right) = p\left(\text{cont}(S,1),\dots,\text{cont}(S,2k)\right). \]
\end{proof}  

The above lemma tells us that to calculate the action of $p(N_{1},\dots,N_{2k})$ on any simple module $\Delta_{2k}^{(\lambda,l)}$, we can evaluate $p$ at the contents of any path $T \in \mathsf{Path}(\lambda,l)$. We choose $T^{(\lambda,l)} \in \mathsf{Path}(\lambda,l)$ given in \emph{\Cref{StPathDef}}, and set
\[ c_{\lambda}(i) := \text{cont}(T^{(\lambda,l)},i). \] 
For any $p \in SS_{2k}[x]$ we set
\[ p(\lambda) := p(c_{\lambda}(1),\dots,c_{\lambda}(2k)). \]
The result we seek to use is the following one. A proof can be found in \cite[Lemma 3.4]{JK17}.

\begin{lem} \label{JKLinIndPolys}
Let $A$ be a $\mathbb{C}$-subalgebra of $\mathbb{C}[x_{1},\dots,x_{n}]$ and let
\[ (c_{11},\dots,c_{1n}),\dots,(c_{m1},\dots,c_{mn}) \]
be $m$ $n$-tuples in $\mathbb{C}^{n}$ for some $m \in \mathbb{Z}_{>0}$. Suppose that for each $1\leq i \neq j \leq m$, there exists a polynomial $p \in A$ such that $p(i) \neq p(j)$, where $p(i) := p(c_{i1},\dots,c_{in})$. Then there exists a family of polynomials $p_{1},\dots,p_{m} \in A$ such that
\[ \begin{vmatrix}
p_{1}(1) & p_{1}(2) & \dots & p_{1}(m) \\
p_{2}(1) & p_{2}(2) & \dots & p_{2}(m) \\
\dots & \dots & \dots & \dots \\
p_{m}(1) & p_{m}(2) & \dots & p_{m}(m) \\
\end{vmatrix} \neq 0. \]
\end{lem}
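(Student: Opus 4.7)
The plan is to interpret the statement as saying that the evaluation map $\mathrm{ev}\colon A\to\mathbb{C}^m$ given by $\mathrm{ev}(p)=(p(1),p(2),\dots,p(m))$ has image equal to all of $\mathbb{C}^m$. Since $A$ is a $\mathbb{C}$-subalgebra of $\mathbb{C}[x_1,\dots,x_n]$, it contains all constants, and $\mathrm{ev}$ is a $\mathbb{C}$-algebra homomorphism. Thus $\mathrm{ev}(A)$ is a unital subalgebra of $\mathbb{C}^m$. The hypothesis that for every pair $i\neq j$ there is some $p\in A$ with $p(i)\neq p(j)$ means exactly that $\mathrm{ev}(A)$ separates the $m$ coordinate points of $\mathbb{C}^m$. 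Once I know the image is all of $\mathbb{C}^m$, I can pull back the standard basis $e_1,\dots,e_m$ (where $e_i$ has a $1$ in position $i$ and $0$ elsewhere) to obtain polynomials $p_1,\dots,p_m\in A$ whose evaluation matrix is the identity, which is obviously non-singular.

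To realize the standard basis concretely in $\mathrm{ev}(A)$ I would use a Lagrange-style interpolation. For each fixed $i\in\{1,\dots,m\}$ and each $j\neq i$, apply the hypothesis to obtain $p_{ij}\in A$ with $p_{ij}(i)\neq p_{ij}(j)$. Then set
\[
q_i \;=\; \prod_{\substack{j=1 \\ j\neq i}}^{m}\;\frac{p_{ij}-p_{ij}(j)}{p_{ij}(i)-p_{ij}(j)}.
\]
This element lies in $A$ because $A$ is a subalgebra containing the scalars $p_{ij}(j)$ and $(p_{ij}(i)-p_{ij}(j))^{-1}$. By construction $q_i(j)=0$ for each $j\neq i$, since the $j$-th factor in the product vanishes at the tuple indexed by $j$; and $q_i(i)=1$, since each factor equals $1$ when evaluated at the tuple indexed by $i$. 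Therefore $\mathrm{ev}(q_i)=e_i$.

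Taking $p_i:=q_i$ for $i=1,\dots,m$, the matrix $\bigl(p_i(j)\bigr)_{1\leq i,j\leq m}$ is the identity matrix, whose determinant is $1\neq 0$, completing the proof. The only subtlety worth flagging is that the construction relies on $A$ being closed under inversion of nonzero scalars and under multiplication, which are both automatic for a unital $\mathbb{C}$-subalgebra of a polynomial ring; there is no genuine analytic obstacle here, so the main content of the lemma is really the separation-implies-surjectivity observation, and the Lagrange interpolation above just makes it effective.
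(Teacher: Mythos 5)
Your proof is correct. Note that the paper does not actually prove this lemma itself --- it defers entirely to \cite[Lemma 3.4]{JK17} --- so there is no in-paper argument to compare against; your write-up supplies a clean, self-contained justification. The Lagrange-interpolation construction is sound: each factor $\bigl(p_{ij}-p_{ij}(j)\bigr)/\bigl(p_{ij}(i)-p_{ij}(j)\bigr)$ lies in $A$ because a unital $\mathbb{C}$-subalgebra contains all constants and is closed under scalar multiplication and products (and in the paper's application $A=SS_{2k}[x]$ does contain the constants, since $l_{0}=1$), and the evaluations $q_{i}(j)=\delta_{ij}$ follow immediately. In fact your argument proves slightly more than the stated lemma: it exhibits $p_{1},\dots,p_{m}$ whose evaluation matrix is the identity, rather than merely some nonsingular matrix, which is exactly the separation-implies-surjectivity statement for the evaluation homomorphism $A\to\mathbb{C}^{m}$. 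The one presentational quibble is that you announce surjectivity of $\mathrm{ev}$ before establishing it; logically the interpolation construction is what proves surjectivity, so the first paragraph should be read as motivation rather than as a step of the proof.
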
 

We wish to employ the above for $A=SS_{2k}[x]$ and the tuples
\[ (c_{\lambda}(1),\dots,c_{\lambda}(2k)), \]
where $\lambda$ runs over all of $\hat{\mathcal{A}}_{2k}$. That is $m=|\hat{\mathcal{A}}_{2k}|$ and $n=2k$. Thus we need to show that for any $(\lambda,l),(\rho,r) \in \hat{\mathcal{A}}_{2k}$ with $(\lambda,l) \neq (\rho,r)$, there exists a supersymmetric polynomial $p \in SS_{2k}[x]$ such that $p(\lambda)\neq p(\rho)$. We will prove this by showing the contrapositive, namely if $p(\lambda)=p(\rho)$ for all $p \in SS_{2k}[x]$, then we want to show that $(\lambda,l)=(\rho,r)$. Recall that the elementary supersymmetric polynomials $l_{n}$ (given in \emph{\Cref{EleSSPDef}}) generate $SS_{2k}[x]$, so this is equivalent to showing that if 
\[ \frac{\prod_{i=1}^{k}(1+c_{\lambda}(2i-1)t)}{\prod_{i=1}^{k}(1-c_{\lambda}(2i)t)} = \frac{\prod_{i=1}^{k}(1+c_{\rho}(2i-1)t)}{\prod_{i=1}^{k}(1-c_{\rho}(2i)t)}, \]
then $(\lambda,l)=(\rho,r)$. \emph{\Cref{StPathCont}} tells us that for any $(\lambda,l) \in \hat{\mathcal{A}}_{2k}$,
\[ \frac{\prod_{i=1}^{k}(1+c_{\lambda}(2i-1)t)}{\prod_{i=1}^{k}(1-c_{\lambda}(2i)t)} = \frac{\left(1-\frac{\delta}{2}t\right)^{l}\prod_{i=0}^{k-l-1}\left(1+\left(i-\frac{\delta}{2}\right)t\right)}{\left(1-\frac{\delta}{2}t \right)^{l}\prod_{j \in D(\lambda)}\left(1+\left(\frac{\delta}{2}-j\right)t\right)^{m_{\lambda}(j)}} = \frac{\prod_{i=0}^{k-l-1}(1+(i-\frac{\delta}{2})t)}{\prod_{j \in D(\lambda)}(1+(\frac{\delta}{2}-j)t)^{m_{\lambda}(j)}}. \]
Thus collectively, it suffices to show that if \[ \frac{\prod_{i=0}^{k-l-1}(1+(i-\frac{\delta}{2})t)}{\prod_{j \in D(\lambda)}(1+(\frac{\delta}{2}-j)t)^{m_{\lambda}(j)}} = \frac{\prod_{i=0}^{k-r-1}(1+(i-\frac{\delta}{2})t)}{\prod_{j \in D(\rho)}(1+(\frac{\delta}{2}-j)t)^{m_{\rho}(j)}}, \tag{Eq3} \]
then $(\lambda,l)=(\rho,r)$. In order to show this, it will be helpful to understand when the rational polynomials in $t$ involved in (Eq3) are reduced, that is when the numerator and denominator share no common irreducible factors. Also, when they are not reduced, we would like to know what factors cancel. This is all described in the following lemma. Recall the diagonal datum $(D(\lambda),m_{\lambda})$ of a partition $\lambda$ from \emph{\Cref{DiaDatumDef}}, then for any integer $\delta$ we let $D(\lambda)_{\leq \delta} = D(\lambda)\cap \mathbb{Z}_{\leq \delta}$ and $D(\lambda)_{> \delta} = D(\lambda)\cap \mathbb{Z}_{>\delta}$.

\begin{lem} \label{EleSSPRedFrac}
Let $(\lambda,l) \in \hat{\mathcal{A}}_{2k}$, then we have the following two cases:
\begin{itemize}
\item[(1)] Suppose $\delta \not\in \{-h(\lambda),\dots,2k-2\}$. Then
\[ \frac{\prod_{i=0}^{k-l-1}(1+(i-\frac{\delta}{2})t)}{\prod_{j \in D(\lambda)}(1+(\frac{\delta}{2}-j)t)^{m_{\lambda}(j)}}, \]
is reduced.
\item[(2)] Suppose $h(\lambda)\geq 1$ and $\delta \in \{-h(\lambda),\dots,-1\}$. Then
\[ \frac{\prod_{i=0}^{k-l-1}(1+(i-\frac{\delta}{2})t)}{\prod_{j \in D(\lambda)}(1+(\frac{\delta}{2}-j)t)^{m_{\lambda}(j)}} = \frac{\prod_{i=\delta+h(\lambda)+1}^{k-l-1}(1+(i-\frac{\delta}{2})t)}{\prod_{j \in D(\lambda)_{\leq \delta}}(1+(\frac{\delta}{2}-j)t)^{m_{\lambda}(j)-1}\prod_{j \in D(\lambda)_{> \delta}}(1+(\frac{\delta}{2}-j)t)^{m_{\lambda}(j)}}, \]
where the rational polynomial in $t$ on the right is reduced.
\end{itemize}
\end{lem}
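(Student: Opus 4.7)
The plan is to reduce the lemma to bookkeeping of matching linear factors. Observe that a factor $1+(i-\delta/2)t$ in the numerator and a factor $1+(\delta/2-j)t$ in the denominator coincide (equivalently, share a root in $\mathbb{C}$) if and only if $i+j=\delta$. So I would classify the pairs $(i,j)$ with $0\le i\le k-l-1$ and $j\in D(\lambda)=\{-h(\lambda),\dots,w(\lambda)\}$ satisfying $i+j=\delta$, and in each case read off the reduced form.

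For Case (1), I would split on whether $\delta$ is an integer. If $\delta\notin\mathbb{Z}$ then $i+j=\delta$ has no integer solutions at all. If $\delta\in\mathbb{Z}$ and $\delta<-h(\lambda)$, then $\delta$ lies strictly below the minimum value $-h(\lambda)$ of $i+j$ on the allowed range, so no solution exists. If $\delta\in\mathbb{Z}$ and $\delta\ge 2k-1$, I bound the maximum of $i+j$ by $(k-l-1)+w(\lambda)\le 2(k-l-1)\le 2k-2$, using $w(\lambda)\le|\lambda|-1=k-l-1$; again no solution. Combining these three sub-cases shows that numerator and denominator share no common factor, giving (1).

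For Case (2), with $\delta\in\{-h(\lambda),\dots,-1\}$ an integer, I would enumerate the matching pairs. The constraints $i\ge 0$ and $j\ge-h(\lambda)$ together with $i+j=\delta\le-1$ force $j\in\{-h(\lambda),\dots,\delta\}=D(\lambda)_{\le\delta}$ and $i=\delta-j\in\{0,\dots,\delta+h(\lambda)\}$; the upper bound $i\le k-l-1$ is automatic since $\delta+h(\lambda)\le h(\lambda)-1\le|\lambda|-1=k-l-1$. Each such $i$ appears exactly once in the numerator while each such $j$ appears $m_\lambda(j)$ times in the denominator, so one round of cancellation removes the entire block $\{0,\dots,\delta+h(\lambda)\}$ of $i$-indices from the numerator and drops the multiplicity of every $j\in D(\lambda)_{\le\delta}$ by one in the denominator, yielding precisely the expression in the statement.

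The one step requiring care is the claim that the resulting fraction is reduced, which I view as the main (mild) obstacle. For this I would argue directly from the pairing condition $i+j=\delta$: any surviving numerator factor has index $i\ge\delta+h(\lambda)+1$, and any surviving denominator factor has $j\ge-h(\lambda)$, so $i+j\ge\delta+1>\delta$, ruling out any further coincidence. The case $h(\lambda)=0$ raises no issue since then $\{-h(\lambda),\dots,-1\}$ is empty and Case (2) is vacuous.
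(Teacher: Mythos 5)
Your proposal is correct and takes essentially the same approach as the paper: both identify common factors via the condition $i+j=\delta$, derive the bound $-h(\lambda)\leq\delta\leq 2k-2$ to settle case (1), and in case (2) enumerate the cancelling pairs $(0,\delta),(1,\delta-1),\dots,(h(\lambda)+\delta,-h(\lambda))$ to obtain the stated reduced form. Your explicit verification that the right-hand side of (2) is reduced (via $i+j\geq\delta+1$ for all surviving pairs) is a detail the paper asserts without spelling out, but the argument is the same.
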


\begin{proof}
$(1)$: We seek to show that the polynomials $\prod_{i=0}^{k-l-1}(1+(i-\frac{\delta}{2})t)$ and $\prod_{j \in D(\lambda)}(1+(\frac{\delta}{2}-j)t)^{m_{\lambda}(j)}$ share no common factors. Assume, for a contradiction, that this is not the case. Then $i-\delta/2 = \delta/2 -j$, and so $\delta = i+j$, for some $0\leq i \leq k-l-1$ and $j \in D(\lambda) = \{-h(\lambda),\dots,w(\lambda)\}$. Thus immediately we see that if $\delta \not\in \mathbb{Z}$ then the fraction is reduced. Furthermore we have that
\[ -h(\lambda) \leq \delta \leq w(\lambda)+k-l-1 \leq 2(k-l-1) \leq 2k-2, \]
which contradicts the assumption that $\delta \not\in \{-h(\lambda),\dots,2k-2\}$.

$(2)$: We now seek to understand what factors of 
\[ \frac{\prod_{i=0}^{k-l-1}(1+(i-\frac{\delta}{2})t)}{\prod_{j \in D(\lambda)}(1+(\frac{\delta}{2}-j)t)^{m_{\lambda}(j)}}, \]
cancel out when $h(\lambda)\geq 1$ and $\delta \in \{-h(\lambda),\dots,-1\}$. As above the numerator and denominator share a common factor if $\delta = i+j$ for some $0\leq i \leq k-l-1$ and $j \in D(\lambda)$. Let
\[ P(\delta) = \{(i,j) \hspace{1mm} | \hspace{1mm} \delta = i+j, \hspace{1mm} 0\leq i \leq k-l-1, \hspace{1mm} j \in D(\lambda)\}. \]
So $|P(\delta)|$ is the number of common factors between $\prod_{i=0}^{k-l-1}(1+(i-\frac{\delta}{2})t)$ and $\prod_{j \in D(\lambda)}(1+(\frac{\delta}{2}-j)t)^{m_{\lambda}(j)}$. One can see that 
\[ P(\delta) = \{(0,\delta),(1,\delta-1),\dots,(h(\lambda)+\delta,-h(\lambda))\}, \] 
in particular $|P(\delta)| = h(\lambda)+\delta+1$. Therefore the factors of $\prod_{i=0}^{k-l-1}(1+(i-\frac{\delta}{2})t)$ corresponding to $i=0,1,\dots,h(\lambda)+\delta$ each cancel with one of the factors of $\prod_{j \in D(\lambda)}(1+(\frac{\delta}{2}-j)t)^{m_{\lambda}(j)}$ corresponding to $j=\delta,\delta-1,\dots,-h(\lambda)$ respectively. Hence we obtain
\[ \frac{\prod_{i=0}^{k-l-1}(1+(i-\frac{\delta}{2})t)}{\prod_{j \in D(\lambda)}(1+(\frac{\delta}{2}-j)t)^{m_{\lambda}(j)}} = \frac{\prod_{i=\delta+h(\lambda)+1}^{k-l-1}(1+(i-\frac{\delta}{2})t)}{\prod_{j \in D(\lambda)_{\leq \delta}}(1+(\frac{\delta}{2}-j)t)^{m_{\lambda}(j)-1}\prod_{j \in D(\lambda)_{> \delta}}(1+(\frac{\delta}{2}-j)t)^{m_{\lambda}(j)}}, \]
where the rational polynomial in $t$ on the right is reduced.

\end{proof}

Note that the above lemma has given a description of the generating function of $l_{n}(\lambda)$ in reduced form for all $(\lambda,l) \in \hat{\mathcal{A}}_{2k}$ whenever $\mathcal{A}_{2k}(\delta)$ is semisimple, that is whenever $\delta \not\in \{0,1,\dots,2k-2\}$. Knowing this, we can prove the following:

\begin{prop} \label{SSPDistSimples}
Let $\mathcal{A}_{2k}(\delta)$ be semisimple. Let $(\lambda,l),(\rho,r) \in \hat{\mathcal{A}}_{2k}$ such that $(\lambda,l) \neq (\rho,r)$. Then there exists a polynomial $p \in SS_{2k}[x]$ such that $p(\lambda)\neq p(\rho)$.
\end{prop}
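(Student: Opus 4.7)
The plan is to prove the contrapositive: assuming $p(\lambda) = p(\rho)$ for every $p \in SS_{2k}[x]$, I show $(\lambda,l) = (\rho,r)$. By \Cref{StemSSPGens}, it suffices to assume $l_n(\lambda) = l_n(\rho)$ for every elementary supersymmetric polynomial $l_n$, or equivalently that the generating functions for $\{l_n(\lambda)\}_{n\geq 0}$ and $\{l_n(\rho)\}_{n\geq 0}$ coincide. As derived just before the statement (using \Cref{StPathCont}), these take the closed form
\[ F_\lambda(t) := \frac{\prod_{i=0}^{k-l-1}\left(1+\left(i-\tfrac{\delta}{2}\right)t\right)}{\prod_{j \in D(\lambda)}\left(1+\left(\tfrac{\delta}{2}-j\right)t\right)^{m_\lambda(j)}}, \]
and analogously for $F_\rho(t)$, so the assumption becomes $F_\lambda(t) = F_\rho(t)$ as rational functions in $t$.

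The next step is to put both sides into reduced form via \Cref{EleSSPRedFrac}. Semisimplicity forces $\delta \notin \{0, 1, \ldots, 2k-2\}$, so only two scenarios can arise: either both fractions are already reduced (case~(1) of the lemma applies to each), or $\delta$ is a negative integer triggering case~(2) of the lemma for one or both of $\lambda,\rho$. Once both sides are in reduced form, equating them forces equality of numerator and denominator polynomials separately, since each such polynomial is a product of linear factors of the shape $(1+ct)$ with constant term~$1$.

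I would then compare linear factors. When neither side triggers cancellation, the numerators are products of distinct linear factors indexed by $i \in \{0,\dots,k-l-1\}$ and $\{0,\dots,k-r-1\}$, so their equality gives $l = r$; the denominators then match diagonal-by-diagonal to yield $(D(\lambda),m_\lambda) = (D(\rho),m_\rho)$, and hence $\lambda = \rho$ since the diagonal datum determines the partition. When both sides cancel, the surviving numerators are supported on the consecutive-integer intervals $\{\delta+h(\lambda)+1,\dots,k-l-1\}$ and $\{\delta+h(\rho)+1,\dots,k-r-1\}$; matching their minima and maxima simultaneously recovers $h(\lambda)=h(\rho)$ and $l=r$, and the reduced denominators, after splitting over $j \leq \delta$ and $j > \delta$, determine all multiplicities. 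The mixed case (one side cancels, the other does not) is excluded by noting that the factor at $i = 0$ appears in the reduced numerator of the uncancelled side but never in that of the cancelled side (whose indexing starts at $\delta+h(\cdot)+1 \geq 1$); this forces the uncancelled partition to be $\emptyset$, which returns us to case~(1) on both sides.

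The main obstacle I anticipate is the bookkeeping in case~(2) of \Cref{EleSSPRedFrac}: the surviving numerator interval depends on both $h(\lambda)$ and $l$, so one must read off two integer invariants from the endpoints of a single consecutive-integer interval, and then carefully split the reduced denominator into the regions $j \leq \delta$ and $j > \delta$ to recover all of the multiplicities. Once that case analysis is organised, ruling out the mixed case and reading $\lambda$ off the diagonal datum is routine.
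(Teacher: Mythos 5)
Your overall strategy coincides with the paper's: pass to the contrapositive, encode the values $l_{n}(\lambda)$ in a generating function, put both sides in reduced form via \Cref{EleSSPRedFrac}, and compare numerators and denominators case by case. The both-reduced and both-cancelled cases are handled essentially as in the paper, with one small imprecision: a diagonal of content $\delta/2$ contributes the trivial factor $1$ to the denominator and is therefore invisible, so equating denominators only recovers the diagonal datum away from $\delta/2$; one needs $l=r$, hence $|\lambda|=|\rho|$, to pin down that last diagonal (this is exactly how the paper finishes Cases 1 and 4, and the ingredient is available to you since you derive $l=r$ first).

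The genuine gap is in the mixed case. Your observation that the factor at $i=0$, namely $1-\tfrac{\delta}{2}t$, occurs in the reduced numerator of the uncancelled side but never in that of the cancelled side does correctly force the uncancelled numerator to be the empty product, i.e.\ $\rho=\emptyset$. But the conclusion ``which returns us to case (1) on both sides'' is false: the cancelled partition $\lambda$ still satisfies $h(\lambda)\geq 1$ and $\delta\in\{-h(\lambda),\dots,-1\}$, so its fraction remains in case (2) of \Cref{EleSSPRedFrac}, and nothing is resolved --- indeed $\lambda\neq\emptyset=\rho$, so if the mixed case could actually occur your argument would terminate with a counterexample rather than a proof. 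What is required is to show the mixed case is impossible. One way to finish in your framework: the cancelled side's reduced numerator $\prod_{i=\delta+h(\lambda)+1}^{k-l-1}(1+(i-\tfrac{\delta}{2})t)$ is a \emph{non-empty} product of non-trivial linear factors, since $\delta+h(\lambda)+1\leq h(\lambda)\leq |\lambda|-1=k-l-1$ (using $\delta\leq -1$ and the fact that a partition with $h(\lambda)+1$ rows has at least $h(\lambda)+1$ boxes), so it cannot equal the empty product; contradiction. The paper instead equates the minimal indices of the two numerator products to obtain $\delta+h(\lambda)+1=0$, i.e.\ $\delta=-h(\lambda)-1$, contradicting $\delta\geq -h(\lambda)$. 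Either repair is short, but as written your mixed case is not excluded.
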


\begin{proof}
We will prove this by showing the contrapositive, that is if $p(\lambda)=p(\rho)$ for all $p \in SS_{2k}[x]$, then $(\lambda,l)=(\rho,r)$. As mentioned above, having $p(\lambda)=p(\rho)$ for all $p \in SS_{2k}[x]$ is equivalent to saying that
\[ \frac{\prod_{i=0}^{k-l-1}(1+(i-\frac{\delta}{2})t)}{\prod_{j \in D(\lambda)}(1+(\frac{\delta}{2}-j)t)^{m_{\lambda}(j)}} = \frac{\prod_{i=0}^{k-r-1}(1+(i-\frac{\delta}{2})t)}{\prod_{j \in D(\rho)}(1+(\frac{\delta}{2}-j)t)^{m_{\rho}(j)}}. \]
Using \emph{\Cref{EleSSPRedFrac}} we will break the above equality into four cases, and for each we will show that either $(\lambda,l)=(\rho,r)$, or the case is impossible. The four cases to consider are the following:
\vspace{2mm}

\begin{itemize}
\item[(Case 1)] $\delta \not\in \{-h(\lambda),\dots,2k-2\}\cup\{-h(\rho),\dots,2k-2\}$.
\item[(Case 2)] $\delta \not\in \{-h(\rho),\dots,2k-2\}$ but $\delta \in \{-h(\lambda),\dots,-1\}$ with $h(\lambda)\geq 1$.
\item[(Case 3)] $\delta \not\in \{-h(\lambda),\dots,2k-2\}$ but $\delta \in \{-h(\rho),\dots,-1\}$ with $h(\rho)\geq 1$.
\item[(Case 4)] $\delta \in \{-h(\rho),\dots,-1\}\cap\{-h(\lambda),\dots,-1\}$.
\end{itemize}
\vspace{2mm}

(Case 1): Since $\delta \not\in \{-h(\lambda),\dots,2k-2\}\cup\{-h(\rho),\dots,2k-2\}$, \emph{\Cref{EleSSPRedFrac}} (1) implies that 
\[ \frac{\prod_{i=0}^{k-l-1}(1+(i-\frac{\delta}{2})t)}{\prod_{j \in D(\lambda)}(1+(\frac{\delta}{2}-j)t)^{m_{\lambda}(j)}} = \frac{\prod_{i=0}^{k-r-1}(1+(i-\frac{\delta}{2})t)}{\prod_{j \in D(\rho)}(1+(\frac{\delta}{2}-j)t)^{m_{\rho}(j)}}. \]
where both sides are reduced. Since they are reduced, we may equate the numerators and denominators. Equating the numerators gives
\[ \prod_{i=0}^{k-l-1}\left(1+\left(i-\frac{\delta}{2}\right)t\right) = \prod_{i=0}^{k-r-1}\left(1+\left(i-\frac{\delta}{2}\right)t\right). \tag{Eq4} \]
Assume one of the factors on the left hand side is trivial, that is $i=\delta/2$ for some $0\leq i \leq k-l-1$. This would imply that $0 \leq \delta \leq 2(k-l-1)$, which contradicts the assumption $\delta \not\in \{-h(\lambda),\dots,2k-2\}$. As such no factor on the left hand side of (Eq4) is trivial, similarly no factor on the right is trivial. Therefore (Eq4) implies that $l=r$ and hence $|\lambda|=|\rho|$. Now equating the denominators gives
\[ \prod_{j \in D(\lambda)}\left(1+\left(\frac{\delta}{2}-j\right)t\right)^{m_{\lambda}(j)} = \prod_{j \in D(\rho)}\left(1+\left(\frac{\delta}{2}-j\right)t\right)^{m_{\rho}(j)}.
\]
This implies $D(\lambda)\backslash\{\delta/2\} = D(\rho)\backslash\{\delta/2\}$ and that $m_{\lambda}(j)=m_{\rho}(j)$ for all $ j \in D(\lambda)\backslash\{\delta/2\}$. This means that the Young diagrams $[\lambda]$ and $[\rho]$ can only differ in the diagonal indexed by $\delta/2$. However since $|\lambda|=|\rho|$, no such difference is present, hence $(\lambda,l)=(\rho,r)$.
\vspace{2mm}

(Case 2): Since $\delta \not\in \{-h(\rho),\dots,2k-2\}$ but $\delta \in \{-h(\lambda),\dots,-1\}$ with $h(\lambda)\geq 1$, \emph{\Cref{EleSSPRedFrac}} (1) and (2) tell us that
\[ \frac{\prod_{i=\delta+h(\lambda)+1}^{k-l-1}(1+(i-\frac{\delta}{2})t)}{\prod_{j \in D(\lambda)_{\leq \delta}}(1+(\frac{\delta}{2}-j)t)^{m_{\lambda}(j)-1}\prod_{j \in D(\lambda)_{> \delta}}(1+(\frac{\delta}{2}-j)t)^{m_{\lambda}(j)}} = \frac{\prod_{i=0}^{k-r-1}(1+(i-\frac{\delta}{2})t)}{\prod_{j \in D(\rho)}(1+(\frac{\delta}{2}-j)t)^{m_{\rho}(j)}}. \]
As these are reduced we may equate the numerators and denominators. Equating the numerators gives
\[ \prod_{i=\delta+h(\lambda)+1}^{k-l-1}\left(1+\left(i-\frac{\delta}{2}\right)t\right) = \prod_{i=0}^{k-r-1}\left(1+\left(i-\frac{\delta}{2}\right)t\right). \tag{Eq5} \]
From the previous case we know that the right hand side of (Eq5) has no trivial factors. Assume the left hand side has a trivial factor, that is $i=\delta/2$ for some $\delta+h(\lambda)+1\leq i \leq k-l-1$. This implies that $2(\delta+h(\lambda)+1) \leq \delta$, which gives the inequality $\delta \leq -2h(\lambda)-2$. However this contradicts the assumption $\delta \in \{-h(\lambda),\dots,-1\}$. Hence none of the factors on the left hand side of (Eq5) are trivial. Therefore (Eq5) implies that $\delta+h(\lambda)+1=0$, and so $\delta=-h(\lambda)-1$, but this contradicts the assumption $\delta \in \{-h(\lambda),\dots,-1\}$. Thus this equality can never hold, i.e. this case is impossible. By symmetry, the same can be said for (Case 3).
\vspace{2mm}

(Case 4): Since $\delta \in \{-h(\rho),\dots,-1\}\cap\{-h(\lambda),\dots,-1\}$ with $h(\lambda),h(\rho)\geq 1$, \emph{\Cref{EleSSPRedFrac}} (2) implies
\[ \frac{\prod_{i=\delta+h(\lambda)+1}^{k-l-1}(1+(i-\frac{\delta}{2})t)}{\prod_{j \in D(\lambda)_{\leq \delta}}(1+(\frac{\delta}{2}-j)t)^{m_{\lambda}(j)-1}\prod_{j \in D(\lambda)_{> \delta}}(1+(\frac{\delta}{2}-j)t)^{m_{\lambda}(j)}} \hspace{50mm} \]
\[ \hspace{50mm} = \frac{\prod_{i=\delta+h(\rho)+1}^{k-r-1}(1+(i-\frac{\delta}{2})t)}{\prod_{j \in D(\rho)_{\leq \delta}}(1+(\frac{\delta}{2}-j)t)^{m_{\rho}(j)-1}\prod_{j \in D(\rho)_{> \delta}}(1+(\frac{\delta}{2}-j)t)^{m_{\rho}(j)}}. \]
Since both sides are reduced, we may equate the numerators and denominators. Equating numerators gives
\[ \prod_{i=\delta+h(\lambda)+1}^{k-l-1}\left(1+\left(i-\frac{\delta}{2}\right)t\right) = \prod_{i=\delta+h(\rho)+1}^{k-r-1}\left(1+\left(i-\frac{\delta}{2}\right)t\right). \] 
Arguing as in case (2), none of the factors in the above equality are trivial. As such we must have that both $\delta+h(\lambda)+1 = \delta+h(\rho)+1$ and $k-l-1 = k-r-1$, hence $h(\lambda)=h(\rho)$, $l=r$, and $|\lambda|=|\rho|$. By assumption $-h(\lambda)=-h(\rho) \leq \delta \leq -1$, hence we have that $D(\lambda)_{\leq \delta} = D(\rho)_{\leq \delta}$. Now equating the denominators gives
\[ \prod_{j \in D(\lambda)_{\leq \delta}}\left(1+\left(\frac{\delta}{2}-j\right)t\right)^{m_{\lambda}(j)-1}\prod_{j \in D(\lambda)_{> \delta}}\left(1+\left(\frac{\delta}{2}-j\right)t\right)^{m_{\lambda}(j)} \hspace{40mm} \]
\[ \hspace{40mm} = \prod_{j \in D(\rho)_{\leq \delta}}\left(1+\left(\frac{\delta}{2}-j\right)t\right)^{m_{\rho}(j)-1}\prod_{j \in D(\rho)_{> \delta}}\left(1+\left(\frac{\delta}{2}-j\right)t\right)^{m_{\rho}(j)}. \]
Since $D(\mu_{1})_{\leq \delta}\cap D(\mu_{2})_{> \delta} = \emptyset$ for any $\mu_{1},\mu_{2} \in \{\lambda,\rho\}$, we must have 
\[ \prod_{j \in D(\lambda)_{\leq \delta}}\left(1+\left(\frac{\delta}{2}-j\right)t\right)^{m_{\lambda}(j)-1} = \prod_{j \in D(\rho)_{\leq \delta}}\left(1+\left(\frac{\delta}{2}-j\right)t\right)^{m_{\rho}(j)-1}, \tag{Eq6} \]
and
\[ \prod_{j \in D(\lambda)_{> \delta}}\left(1+\left(\frac{\delta}{2}-j\right)t\right)^{m_{\lambda}(j)} = \prod_{j \in D(\rho)_{> \delta}}\left(1+\left(\frac{\delta}{2}-j\right)t\right)^{m_{\rho}(j)}. \tag{Eq7} \]
Since $\delta/2 \not\in D(\lambda)_{\leq \delta} = D(\rho)_{\leq \delta}$ by definition, there must be no trivial factors in (Eq6). As such the multiplicities in (Eq6) must agree, that is $m_{\lambda}(j)=m_{\rho}(j)$ for all $j \in D(\lambda)_{\leq \delta}$. Now (Eq7) tells us that $D(\lambda)_{> \delta}\backslash\{\delta/2\} = D(\rho)_{> \delta}\backslash\{\delta/2\}$ and that the multiplicity functions $m_{\lambda}$ and $m_{\rho}$ agree on this set. Hence together, (Eq6) and (Eq7) tell us that $D(\lambda)\backslash\{\delta/2\} = D(\rho)\backslash\{\delta/2\}$ and their multiplicity functions $m_{\lambda}$ and $m_{\rho}$ agree on this set. As was the situation in case (1), this implies that the Young diagrams $[\lambda]$ and $[\rho]$ can only differ in the diagonal indexed by $\delta/2$, but since $|\lambda|=|\rho|$, no difference is present, showing that $(\lambda,l)=(\rho,r)$.

\end{proof}

\begin{rmk} \label{SpecCaseBlockResult}
It is worth mentioning that the above proposition follows from the semisimple case of \emph{\Cref{AltBlockCri}}, proven in the next section, and a description of the blocks of the partition algebra given by Martin (see \emph{\Cref{MartinBlocks}}). We have included the proof above for completeness, and to show that the result on the semisimple center (\emph{\Cref{SSP=Center}} below) can be proven without any knowledge of the block theory of $\mathcal{A}_{2k}(\delta)$, but just from knowing that $l_{n}(N_{1},\dots,N_{2k})$ is central.
\end{rmk}

\begin{thm} \label{SSP=Center}
Let $\mathcal{A}_{2k}(\delta)$ be semisimple. Then the supersymmetric polynomials in $N_{1},\dots,N_{2k}$ generate the center of $\mathcal{A}_{2k}(\delta)$. That is $SS_{2k}[N_{1},\dots,N_{2k}] = Z(\mathcal{A}_{2k}(\delta))$.
\end{thm}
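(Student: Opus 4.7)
The plan is to combine the inclusion $SS_{2k}[N_{1},\dots,N_{2k}] \subseteq Z(\mathcal{A}_{2k}(\delta))$ from \Cref{SSPCentral} with a dimension count. In the semisimple case, Artin--Wedderburn gives
\[ \dim Z(\mathcal{A}_{2k}(\delta)) = |\hat{\mathcal{A}}_{2k}|, \]
so it suffices to exhibit $m := |\hat{\mathcal{A}}_{2k}|$ linearly independent elements inside $SS_{2k}[N_{1},\dots,N_{2k}]$.

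First I would apply \Cref{JKLinIndPolys} with $A = SS_{2k}[x] \subseteq \mathbb{C}[x_{1},\dots,x_{2k}]$ and the $m$ tuples $(c_{\lambda}(1),\dots,c_{\lambda}(2k))$ indexed by $(\lambda,l) \in \hat{\mathcal{A}}_{2k}$. The required pairwise separation hypothesis is exactly the content of \Cref{SSPDistSimples}. This produces supersymmetric polynomials $p_{1},\dots,p_{m} \in SS_{2k}[x]$ such that the evaluation matrix $M := (p_{i}(\lambda))_{i,(\lambda,l)}$ is non-singular.

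Next I would upgrade this to linear independence of the images $p_{1}(N_{1},\dots,N_{2k}),\dots,p_{m}(N_{1},\dots,N_{2k})$ in $\mathcal{A}_{2k}(\delta)$. Since each $p_{i}(N_{1},\dots,N_{2k})$ lies in the center by \Cref{SSPCentral}, it acts as a scalar on every simple module $\Delta_{2k}^{(\lambda,l)}$. By \Cref{HRJMAction}, $N_{j}$ acts on any GZ-basis vector $v_{T}$ (for $T \in \mathsf{Path}(\lambda,l)$) by $\mathrm{cont}(T,j)$, and \Cref{EvalPathEquiv} shows this scalar is independent of the choice of $T$, so one may take $T = T^{(\lambda,l)}$ and obtain the eigenvalue $p_{i}(\lambda)$. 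Hence, under the central character isomorphism $Z(\mathcal{A}_{2k}(\delta)) \xrightarrow{\sim} \bigoplus_{(\lambda,l)\in\hat{\mathcal{A}}_{2k}} \mathbb{C}$, the $m$ elements $p_{i}(N_{1},\dots,N_{2k})$ are sent to the rows of $M$. Non-singularity of $M$ gives linear independence.

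Putting it together, $SS_{2k}[N_{1},\dots,N_{2k}]$ is a central subalgebra containing $m = \dim Z(\mathcal{A}_{2k}(\delta))$ linearly independent elements, forcing $SS_{2k}[N_{1},\dots,N_{2k}] = Z(\mathcal{A}_{2k}(\delta))$. No step here is a real obstacle; the genuine work was already absorbed into \Cref{SSPCentral} (centrality) and \Cref{SSPDistSimples} (separation of simples by supersymmetric polynomials evaluated at contents), with the linear-algebra lifting supplied by \Cref{JKLinIndPolys}.
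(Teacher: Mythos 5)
Your proposal is correct and follows essentially the same route as the paper: invoke \Cref{SSPDistSimples} to satisfy the hypothesis of \Cref{JKLinIndPolys}, lift the non-singularity of the evaluation matrix to linear independence of the $p_{i}(N_{1},\dots,N_{2k})$ via their scalar action on simple modules (\Cref{HRJMAction} and \Cref{EvalPathEquiv}), and conclude by comparing with $\dim Z(\mathcal{A}_{2k}(\delta)) = |\hat{\mathcal{A}}_{2k}|$. The only cosmetic difference is that you package the lifting step through the central character isomorphism, whereas the paper argues directly that a vanishing linear combination forces all coefficients to vanish.
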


\begin{proof}
By \emph{\Cref{SSPDistSimples}}, we can apply \emph{\Cref{JKLinIndPolys}} to the case $A = SS_{2k}[x]$ and where
\[ \{ (c_{11},\dots,c_{1n}),\dots,(c_{m1},\dots,c_{mn}) \} = \left\{ \left(c_{\lambda}(1),\dots,c_{\lambda}(2k)\right) \hspace{1mm} : \hspace{1mm} (\lambda,l) \in \hat{\mathcal{A}}_{2k} \right\}. \]
So we have that $n = 2k$ and $m = |\hat{\mathcal{A}}_{2k}|$. Hence \emph{\Cref{JKLinIndPolys}} tells us that there exists a family of supersymmetric polynomials $\{ p_{\lambda} : (\lambda,l) \in \hat{\mathcal{A}}_{2k}\} \subset SS_{2k}[x]$ such that the matrix $(p_{\lambda}(\mu))_{(\lambda,l),(\mu,m) \in \hat{\mathcal{A}}_{2k}}$ in $\mathbb{C}^{m\times m}$ is invertible, recalling that $p_{\lambda}(\mu) := p_{\lambda}(c_{\mu}(1),\dots,c_{\mu}(2k))$. We will now show that the corresponding polynomials $p_{\lambda}(N_{1},\dots,N_{2k})$ in $SS_{2k}[N_{1},\dots,N_{2}]$ are also linearly independent. Assume that
\[ P = \sum_{(\lambda,l) \in \hat{\mathcal{A}}_{2k}}c_{\lambda}p_{\lambda}(N_{1},\dots,N_{2k}) = 0, \]
for some $c_{\lambda} \in \mathbb{C}$. Hence $P$ acts on any simple $\mathcal{A}_{2k}(\delta)$ module $\Delta_{2k}^{(\mu,m)}$ by 0. By \emph{\Cref{HRJMAction}} and \emph{\Cref{EvalPathEquiv}} this means
\[ \sum_{(\lambda,l) \in \hat{\mathcal{A}}_{2k}}c_{\lambda}p_{\lambda}(\mu) = 0, \] 
for any $(\mu,m) \in \hat{\mathcal{A}}_{2k}$. However, since the column vectors of $((p_{\lambda}(\mu))_{(\lambda,l),(\mu,m) \in \hat{\mathcal{A}}_{2k}}$ are linearly independent, we must have that $c_{\lambda} = 0$ for all $(\lambda,l) \in \hat{\mathcal{A}}_{2k}$. Therefore the set 
\[ \left\{ p_{\lambda}(N_{1},\dots,N_{2k}) : (\lambda,l) \in \hat{\mathcal{A}}_{2k} \right\} \]
is linearly independent in $SS_{2k}[N_{1},\dots,N_{2k}]$. Since $\mathcal{A}_{2k}(\delta)$ is semisimple, we know that the dimension of the center $Z(\mathcal{A}_{2k}(\delta))$ equals $|\hat{\mathcal{A}}_{2k}|$, which equals the size of the above linearly independent set. Hence this set is a basis, which shows that $SS_{2k}[N_{1},\dots,N_{2k}] = Z(\mathcal{A}_{2k}(\delta))$.
 
\end{proof}


\section{Non-semisimple Case}

In this section we will recall some of the block theory of $\mathcal{A}_{2k}(\delta)$ for arbitrary $\delta \in \mathbb{C}$ which was developed by P. Martin in \cite{Martin96}, and later by D. Wales and W. Doran in \cite{DW00}. We will conclude by giving an equivalent condition for when two partitions belong to the same block, in terms of the generating function of $l_{n}(\lambda)$, which played a pivotal role in the previous section. 

Let $A$ be any finite dimensional $\mathbb{C}$-algebra, and let $\Lambda$ be an indexing set for the isomorphism classes of simple $A$-modules. The algebra $A$ has a unique decomposition as a direct sum of indecomposable 2-sided ideals 
\[ A=e_{1}A\oplus e_{2}A\oplus \dots \oplus e_{n}A, \]
where $1=e_{1}+e_{2}+\dots+e_{n}$ is a decomposition of unity as a sum of primitive central idempotents $e_{i}\in A$. The direct summands in the above decomposition are called the \emph{blocks} of $A$. We say that an $A$-module $M$ belongs to the block $e_{i}A$ if $e_{i}M=M$ and $e_{j}M=0$ for all $j \neq i$. Any simple module of $A$ belongs to a particular block. Also one can deduce that $M$ belongs to the block $e_{i}A$ if and only if all the composition factors of $M$ belong to $e_{i}A$. We can equip the indexing set $\Lambda$ with the equivalence relation $\lambda \sim \mu$ if and only if the simple modules indexed by $\lambda$ and $\mu$ belong to the same block. Let $\mathcal{B}_{A}(\lambda)$ be the equivalence class of $\lambda$ in $\Lambda$ with respect to this equivalence relation. We will refer to $\mathcal{B}_{A}(\lambda)$ as a \emph{block} of $\Lambda$. Whenever $A$ is semisimple, we have that $\mathcal{B}_{A}(\lambda)=\{\lambda\}$ for all $\lambda \in \Lambda$. 

For any $\lambda \in \Lambda$ let $A^{\lambda}$ denote the simple $A$-module corresponding to $\lambda$. Let $z$ belong to the center $Z(A)$ of $A$. Then by Schur's lemma the element $z$ acts by a scalar on $A^{\lambda}$. Let $\chi_{\lambda}(z) \in \mathbb{C}$ denote this scalar. Then we obtain a $\mathbb{C}$-algebra homomorphism
\[ \chi_{\lambda}:Z(A) \rightarrow \mathbb{C}, \]
which we call the \emph{central character induced by} $\lambda$. It is well known that $\lambda$ and $\mu$ belong to the same block of $\Lambda$ if and only if the central characters $\chi_{\lambda}$ and $\chi_{\mu}$ equal one another. In this sense the center $Z(A)$ can distinguish between the blocks of $A$. 

We return now to the partition algebra $\mathcal{A}_{2k}(\delta)$. Whenever $\delta\neq 0$, the indexing set for the simple $\mathcal{A}_{2k}(\delta)$-modules is $\hat{\mathcal{A}}_{2k}$, that is the set of partitions $\lambda \vdash k-l$ where $0\leq l \leq k$. When $\delta=0$, the indexing set is $\hat{\mathcal{A}}_{2k}\backslash\{\emptyset\}$ (see \cite[Corollary 2.3]{DW00}). We set
\[ \Lambda_{k}(\delta) := \begin{cases}
                  \hat{\mathcal{A}}_{2k}, & \delta \neq 0 \\
                  \hat{\mathcal{A}}_{2k}\backslash\{\emptyset\}, & \delta = 0
                  \end{cases} \]
For any $\lambda \in \Lambda_{k}(\delta)$ set $\mathcal{B}_{k}(\lambda):=\mathcal{B}_{\mathcal{A}_{2k}(\delta)}(\lambda)$. The blocks of $\Lambda_{k}(\delta)$ were first described by P. Martin in \cite[Proposition 9]{Martin96} for $\delta\neq 0$, where he gave an elegant combinatorial condition for when two partitions belong to the same block. These results were later extended to the case $\delta=0$ by D. Wales and W. Doran in \cite{DW00}. We recall here the combinatorial description for the blocks.

Let $\lambda,\mu$ be partitions. We will write $\mu \subset \lambda$ to mean that as sets we have the inclusion of Young diagrams $[\mu] \subset [\lambda]$. Similarly we write $\lambda\cap \mu$ for the partition given by the Young diagram $[\mu]\cap[\lambda]$. Also, whenever $\mu \subset \lambda$, we call the collection of boxes one obtains from removing the boxes of $\mu$ from $\lambda$, i.e. the set $[\lambda]\backslash [\mu]$, a \emph{skew diagram}, and we denote it by $\lambda/\mu$ and set $[\lambda/\mu] := [\lambda]\backslash [\mu]$.

\begin{ex} \label{SkewDiaEx}
Let $\mu_{1} = (2,2,1,1) \vdash 6$, $\mu_{2} = (2,2,2) \vdash 6$, and $\lambda = (4,3,3,2) \vdash 12$ be partitions with Young diagrams
\[ [\mu_{1}] = \Yvcentermath1 \yng(2,2,1,1) \hspace{8mm} [\mu_{2}] = \Yvcentermath1 \yng(2,2,2) \hspace{8mm} [\lambda] = \Yvcentermath1 \yng(4,3,3,2) \]
We have $\mu_{1},\mu_{2} \subset \lambda$, and the skew diagrams, with the content of each box inscribed within it, are
\[ \lambda/\mu_{1} = \Yvcentermath1 \young(::23,::1,:\mone0,:\mtwo) \hspace{8mm} \lambda/\mu_{2} = \Yvcentermath1 \young(::23,::1,::0,\mthree\mtwo) \]
\end{ex}

As with partitions, we will denote by $MC(\lambda/\mu)$ the multi-set of contents for the skew diagram $\lambda/\mu$. However, given a skew diagram $\lambda/\mu$, there are infinitely many pairs $\mu' \subset \lambda'$ of partitions such that $MC(\lambda/\mu)=MC(\lambda'/\mu')$. Hence the multi-set of contents of a skew diagram does not in general determine the skew diagram. However it is clear that if we also know one of the partitions $\lambda$ or $\mu$, then one can determine the set $[\lambda/\mu]$. 

\begin{defn} \label{SkewHookDef}
Let $\mu$ and $\lambda$ be partitions with $\mu \subset \lambda$. The skew diagram $\lambda/\mu$ is called a \emph{skew hook} if $MC(\lambda/\mu) = \{x,x+1,\dots,y\}$ for some $x,y \in \mathbb{Z}$ with $x\leq y$. 
\end{defn}

In \emph{\Cref{SkewDiaEx}}, the skew diagram $\lambda/\mu_{1}$ is a skew hook, while $\lambda/\mu_{2}$ is not a skew hook since there is no box with content $-1$ present. If $\lambda/\mu$ is a skew hook, then as a diagram it is one connected piece with one box in each of its diagonals. Suppose every box in a skew diagram $\lambda/\mu$ has the same row index, i.e. all of its boxes lie in the same row, then we call such a skew diagram a \emph{horizontal strip}. Noticeably any horizontal strip is a skew hook.

\begin{defn} \label{DeltaPairDef}
Let $\lambda$ and $\mu$ be elements of $\Lambda_{k}(\delta)$ with $\mu \subset \lambda$. We say that the ordered pair $(\mu,\lambda)$ is a $\delta$-pair, written $\mu \hookrightarrow_{\delta} \lambda$, if $\lambda/\mu$ is a horizontal strip with the last (right-most) box having content $\delta - |\mu|$. We say a chain of partitions
\[ \tau^{(x)} \subset \tau^{(x+1)} \subset \dots \subset \tau^{(y)} \]
is a $\delta$-chain if, for each $i$, $(\tau^{(i)},\tau^{(i+1)})$ is a $\delta$-pair, differing in the $(i+1)$-th row, for all $x \leq i \leq y$. 
\end{defn}

\begin{ex} \label{DeltaPairEx}
Let $\delta = 1$, then the chain
\[ \emptyset \subset \Yvcentermath1 \young(01) \subset \Yvcentermath1 \young(01,\mone) \]
of elements in $\Lambda_{3}(1)$ is a $1$-chain. We see that $\emptyset$ and $\PART{2}$ differ in the first row, and the content of the last box in this row is $1 = \delta - |\emptyset|$. Also $\PART{2}$ and $\PART{2,1}$ differ in the second row, and the content of the last box is $-1 = \delta - |\PART{2}|$. 
\end{ex}

Note if $\delta \in \mathbb{C}$ is not an integer, then no $\delta$-pairs exist. Let $\tau$ be a partition with $n\geq 1$ rows. Let $1 \leq i \neq j \leq n+1$, and let $R_{i}$ and $R_{j}$ denote two horizontal strips which could be added to $\tau$ in the $i$-th and $j$-th row respectively to give new partitions. Set $c(R_{i})$ to be the set of contents of the horizontal strip $R_{i}$, and similarly for $R_{j}$. Then one can observe that $c(R_{i})\cap c(R_{j})=\emptyset$. This tells us that if there exists a partition $\lambda$ such that $\tau \subset \lambda$ and $(\tau,\lambda)$ is a $\delta$-pair, then $\lambda$ is the unique partition to do so. Similarly, if $\mu$ exists such that $\mu \subset \tau$ and $(\mu,\tau)$ is a $\delta$-pair, then $\mu$ is unique.    

\begin{lem} \label{AllChainsDeltaChains}
Let $\tau$ be a partition and $\delta \in \mathbb{Z}$. Suppose there exists a partition $\lambda$ such that $\tau \subset \lambda$, $(\tau,\lambda)$ is a $\delta$-pair, and $\lambda$ differs to $\tau$ in the row indexed by $i$ for $i\neq 1$. Then there exists a partition $\mu \subset \tau$ such that $(\mu,\tau)$ is a $\delta$-pair, and $\tau$ differs to $\mu$ in the row indexed by $i-1$. Furthermore $\lambda/\mu$ is a skew hook.
\end{lem}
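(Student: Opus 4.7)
The plan is to pin down the candidate $\mu$ uniquely from the two $\delta$-pair constraints, verify that it is a bona fide subpartition of $\tau$, and then read off the contents of $\lambda/\mu$ directly.

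First I would unwind the hypothesis. Since $(\tau,\lambda)$ is a $\delta$-pair with $\lambda/\tau$ a horizontal strip lying entirely in row $i$, the rightmost box of $\lambda/\tau$ sits at $(i,\lambda_i)$ with content $\lambda_i-i=\delta-|\tau|$, and $\tau_j=\lambda_j$ for $j\neq i$. If $\mu\subset\tau$ is to make $(\mu,\tau)$ a $\delta$-pair differing in row $i-1$, then $\mu_j=\tau_j$ for $j\neq i-1$ and the rightmost box of $\tau/\mu$, at $(i-1,\tau_{i-1})$, must have content $\tau_{i-1}-(i-1)=\delta-|\mu|$. Combining this with $|\mu|=|\tau|-(\tau_{i-1}-\mu_{i-1})$ forces the unique value $\mu_{i-1}=\delta-|\tau|+(i-1)=\lambda_i-1$.

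I would then take $\mu$ to be the sequence defined by $\mu_{i-1}=\lambda_i-1$ and $\mu_j=\tau_j$ for $j\neq i-1$, and check it is a legitimate subpartition of $\tau$. Using $\tau_{i-1}=\lambda_{i-1}\geq \lambda_i$, together with the fact that the non-empty strip $\lambda/\tau$ forces $\lambda_i\geq\tau_i+1$, all the required inequalities $0\leq \mu_{i-1}\leq \tau_{i-1}$, $\mu_{i-1}\geq\tau_i=\mu_i$, and (when $i\geq 3$) $\tau_{i-2}\geq \mu_{i-1}$ drop out immediately. By construction, $\tau/\mu$ is then a non-empty horizontal strip in row $i-1$ whose rightmost box has content $\delta-|\mu|$, so $(\mu,\tau)$ is the desired $\delta$-pair.

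For the skew-hook claim I would directly compute the contents of $\lambda/\mu=(\lambda/\tau)\cup(\tau/\mu)$: the row-$i$ piece contributes the consecutive contents $\tau_i+1-i,\dots,\lambda_i-i$, and the row-$(i-1)$ piece contributes $\lambda_i-i+1,\dots,\lambda_{i-1}-i+1$. These two blocks meet at consecutive integers and are disjoint, so together they form a single run of distinct integers, which is exactly the skew-hook criterion of \emph{\Cref{SkewHookDef}}. The main (and really only) obstacle here is the careful bookkeeping that extracts $\mu_{i-1}=\lambda_i-1$ from the $\delta$-pair content equation and then checks it fits between $\tau_i$ and $\tau_{i-1}$; once that is done, the skew-hook property is automatic because the two horizontal strips join corner-to-corner.
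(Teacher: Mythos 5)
Your proposal is correct and follows essentially the same route as the paper: both construct the unique candidate $\mu$ with $\mu_{i-1}=\lambda_i-1$ (the paper describes it as removing from row $i-1$ of $\tau$ the horizontal strip starting in the column of the rightmost box of $\lambda/\tau$), verify the content condition $c((i-1,\tau_{i-1}))=\delta-|\mu|$, and observe that the two strips join corner-to-corner to give a skew hook. Your version is in fact slightly more careful than the paper's in explicitly checking that $\mu$ is a partition and in writing out the consecutive contents of $\lambda/\mu$.
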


\begin{proof}
Let $\lambda/\tau = R_{i}$ be the horizontal strip in the $i$-th row of $\lambda$, and let the right-most box of $R_{i}$ be $(i,j)$. Since $(\tau,\lambda)$ is a $\delta$-pair, we have that $j-i=\delta-|\tau|$. Consider the box just above $(i,j)$ in $\lambda$, that is the box $(i-1,j)$. Note that $(i-1,j)$ also belongs to $\tau$. Let $(i-1,k) \in \tau$ be the last (right-most) box in the $(i-1)$-th row of $\tau$. Then consider the horizontal strip of boxes $R_{i-1}=\{(i-1,j),(i-1,j+1),\dots,(i-1,k)\}$ with size $|R_{i-1}|=k-j+1$. Let $\mu$ be the partition such that $\mu\cup R_{i-1} = \tau$, in particular $\mu \subset \tau$, $\tau/\mu = R_{i-1}$ is a horizontal strip, and $R_{i-1}$ has been chosen such that $\lambda/\mu$ is a skew hook. What remains now is to show that the last box of $R_{i-1}$, that is the box $(i-1,k)$, has content equal to $\delta -|\mu|$. Well,
\begin{align*}
\delta - |\mu| &= (j-i+|\tau|) - (|\tau|-|R_{i-1}|) \\
&= j-i +|R_{i-1}| \\
&= j-i + (k-j+1) \\
&= k-i+1 \\
&= c((i-1,k)).
\end{align*}

\end{proof}

The above lemma, along with the fact that $\delta$-pairings are unique, implies that if we have a chain of partitions $\tau^{(x)} \subset \tau^{(x+1)} \subset \dots \subset \tau^{(y)}$ such that $(\tau^{(i)},\tau^{(i+1)})$ is a $\delta$-pair for each $i$, then it must be the case that this chain is a $\delta$-chain, i.e. the horizontal strips in which consecutive pairs differ by occur in consecutive rows. Moreover, given such a $\delta$-chain, the skew diagram $\tau^{(y)}/\tau^{(x)}$ is in fact a skew hook. We say that a $\delta$-chain $\tau^{(x)} \subset \tau^{(x+1)} \subset \dots \subset \tau^{(y)}$ is maximal if no partition $\mu$ can be added to form a new $\delta$-chain, that is the chain is maximal in size. The above lemma tells us that if $\tau^{(x)} \subset \tau^{(x+1)} \subset \dots \subset \tau^{(y)}$ is maximal, then $x=0$.

Due to the uniqueness of $\delta$-pairings, the maximal $\delta$-chains partition the indexing set $\Lambda_{k}(\delta)$ (also see \cite[Proposition 8]{Martin96}). Let $\sim_{\mathcal{C}_{k}}$ denote the corresponding equivalence relation, and let $\mathcal{C}_{k}(\lambda)$ denote the equivalence class of $\lambda$ with respect to this relation. For the following result see \cite[Proposition 9]{Martin96} and \cite[Theorem 1.1]{DW00}.

\begin{prop} \label{MartinBlocks}
Let $\lambda \in \Lambda_{k}(\delta)$, then $\mathcal{B}_{k}(\lambda) = \mathcal{C}_{k}(\lambda)$.
\end{prop}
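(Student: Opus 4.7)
The plan is to establish the two set inclusions $\mathcal{C}_{k}(\lambda) \subseteq \mathcal{B}_{k}(\lambda)$ and $\mathcal{B}_{k}(\lambda) \subseteq \mathcal{C}_{k}(\lambda)$ separately. The forward inclusion will rely on the cellular structure of $\mathcal{A}_{2k}(\delta)$ to construct explicit non-zero homomorphisms between cell modules for $\delta$-pairs, while the reverse inclusion will use the central character argument already familiar from \Cref{SSPDistSimples}. Throughout, $\Delta^{\mu}$ denotes the cell (standard) module indexed by $\mu \in \Lambda_{k}(\delta)$ and $L^{\mu}$ its simple head.

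First I would treat $\mathcal{C}_{k}(\lambda) \subseteq \mathcal{B}_{k}(\lambda)$. By transitivity of the block relation, it suffices to show that for a single $\delta$-pair $(\mu, \lambda)$ the simples $L^{\mu}$ and $L^{\lambda}$ lie in the same block. I would exhibit a non-zero homomorphism $\Delta^{\lambda} \to \Delta^{\mu}$ between the corresponding cell modules. The crucial observation is that the defining condition of a $\delta$-pair---the last box of the horizontal strip $\lambda/\mu$ has content $\delta - |\mu|$---is precisely the vanishing condition for an appropriate Gram determinant of the cellular bilinear form on $\Delta^{\mu}$, so that $L^{\lambda}$ occurs as a composition factor of $\Delta^{\mu}$. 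This would force $L^{\mu}$ and $L^{\lambda}$ into the same indecomposable summand of the regular representation, hence the same block.

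Next I would handle $\mathcal{B}_{k}(\lambda) \subseteq \mathcal{C}_{k}(\lambda)$ via the contrapositive. Since \Cref{SSPCentral} gives $SS_{2k}[N_{1},\dots,N_{2k}] \subseteq Z(\mathcal{A}_{2k}(\delta))$, any two partitions indexing simples in the same block must induce the same central character on every elementary supersymmetric polynomial. Writing
\[ G_{\lambda}(t) = \frac{\prod_{i=0}^{k-l-1}\bigl(1+(i-\tfrac{\delta}{2})t\bigr)}{\prod_{j \in D(\lambda)}\bigl(1+(\tfrac{\delta}{2}-j)t\bigr)^{m_{\lambda}(j)}}, \]
this amounts to the equality $G_{\lambda}(t) = G_{\rho}(t)$. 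Adapting the case analysis from \Cref{SSPDistSimples} to the non-semisimple range $\delta \in \{0,1,\dots,2k-2\}$, and using \Cref{EleSSPRedFrac} to pass to the reduced form of each $G_{\lambda}$, I would show that this equality forces $\lambda$ and $\rho$ to be connected by a sequence of $\delta$-pair moves, placing them in the same maximal $\delta$-chain. The combinatorial input here is that each $\delta$-pair move $\mu \hookrightarrow_{\delta} \lambda$ inserts exactly one matched pair of linear factors in the numerator and denominator of $G_{\mu}$, which cancel in the reduced form, so the reduced generating function is a genuine invariant of the $\delta$-chain.

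The main obstacle is the first inclusion: cleanly producing the non-zero homomorphism between cell modules. Although the combinatorics of a $\delta$-pair is perfectly tuned to match the structure of the cellular form, extracting an explicit homomorphism typically demands a substantial calculation on an explicit basis of the cell modules (for instance the one developed by Enyang in \cite{Eny13}), or an inductive argument that uses the globalisation/localisation functors between $\mathcal{A}_{2k}(\delta)$ and $\mathcal{A}_{2k-2}(\delta)$ to reduce to a smaller value of $k$. The second inclusion, while still requiring careful bookkeeping of cancellations in $G_{\lambda}$, is essentially a non-semisimple adaptation of arguments already carried out in Section 4.
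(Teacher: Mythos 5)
First, a point of comparison: the paper does not prove this proposition at all --- it is quoted from \cite[Proposition 9]{Martin96} and \cite[Theorem 1.1]{DW00}, and the generating-function lemmas of Section 5 are used only to translate that known block description into the criterion of \Cref{AltBlockCri}. So you are attempting to supply a proof where the paper supplies a citation. Your reverse inclusion $\mathcal{B}_{k}(\lambda)\subseteq\mathcal{C}_{k}(\lambda)$ is essentially workable, but the right tool is not an ``adaptation of the case analysis from \Cref{SSPDistSimples}'' --- the whole point of that proposition is that in the semisimple range the reduced fraction determines $(\lambda,l)$, which is simply false for $\delta\in\{0,1,\dots,2k-2\}$ (see \Cref{SSBalEx}). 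What you actually need are the two combinatorial lemmas following \Cref{DeltaPairAreSSBal}, which show that $\lambda(t)=\mu(t)$ forces one partition to contain the other and the skew diagram between them to decompose as a $\delta$-chain; those lemmas are independent of \Cref{MartinBlocks}, so there is no circularity, and together with the fact (established in Section 5 via Enyang's Murphy-type basis) that $\chi_{\lambda}|_{SS}$ acts by evaluation at contents even in the non-semisimple case, they give this inclusion.

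The genuine gap is the forward inclusion $\mathcal{C}_{k}(\lambda)\subseteq\mathcal{B}_{k}(\lambda)$. Asserting that the $\delta$-pair condition ``is precisely the vanishing condition for an appropriate Gram determinant, so that $L^{\lambda}$ occurs as a composition factor of $\Delta^{\mu}$'' is not an argument: vanishing of the Gram determinant only tells you that the cell module $\Delta_{2k}^{(\mu,m)}$ is not simple, i.e.\ that the radical of the cellular form is nonzero; it does not identify \emph{which} simple modules appear in that radical. Identifying the composition factor as $D_{2k}^{(\lambda,l)}$ for $\mu\hookrightarrow_{\delta}\lambda$ is the entire content of Martin's Proposition 9 (and of Doran--Wales for $\delta=0$), and requires either an explicit nonzero homomorphism $\Delta_{2k}^{(\lambda,l)}\to\Delta_{2k}^{(\mu,m)}$ constructed on a concrete basis, or an induction through the globalisation/localisation functors relating $\mathcal{A}_{2k}(\delta)$ and $\mathcal{A}_{2k-2}(\delta)$. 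You correctly flag this as the main obstacle, but flagging it does not close it; as written, the proposal establishes only one of the two inclusions.
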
   

We now wish to give an alternative criterion to describe the block structure of $\Lambda_{k}(\delta)$. The partition algebras $\mathcal{A}_{2k}(\delta)$ were shown in \cite{Xi99} to be \emph{cellular algebras} in the sense of Graham and Lehrer in \cite{GL96}. For each $(\lambda,l) \in \hat{\mathcal{A}}_{2k}$ we have a \emph{cell module} $\Delta_{2k}^{(\lambda,l)}$, and for each $(\lambda,l) \in \Lambda_{k}(\delta)$ the cell module $\Delta_{2k}^{(\lambda,l)}$ has a simple head $D^{(\lambda,l)}_{2k}$. The set $\{D_{2k}^{(\lambda,l)} \hspace{1mm} | \hspace{1mm} (\lambda,l) \in \Lambda_{k}(\delta)\}$ gives a complete set of pairwise non-isomorphic simple $\mathcal{A}_{2k}(\delta)$-modules. Let
\[ \chi_{\lambda}:Z(\mathcal{A}_{2k}(\delta))\rightarrow \mathbb{C} \] 
be the central character induced by $(\lambda,l) \in \Lambda_{k}(\delta)$. When $\mathcal{A}_{2k}(\delta)$ is semisimple then $\Delta_{2k}^{(\lambda,l)} \cong D_{2k}^{(\lambda,l)}$ are precisely the simple modules discussed in the previous section. Also we know by \emph{\Cref{SSP=Center}} that $SS_{2k}[N_{1},\dots,N_{2k}]=Z(\mathcal{A}_{2k}(\delta))$, and by \emph{\Cref{HRJMAction}} and \emph{\Cref{EvalPathEquiv}} the characters $\chi_{\lambda}$ act on polynomials $p \in SS_{2k}[N_{1},\dots,N_{2k}]$ by evaluating them over the contents $(c_{\lambda}(1),\dots,c_{\lambda}(2k))$, that is $\chi_{\lambda}(p) = p(c_{\lambda}(1),\dots,c_{\lambda}(2k)) = p(\lambda)$ using the notation established in the previous section. From the discussion at the start of this section, in general we know that $\lambda,\mu \in \Lambda_{k}(\delta)$ are in the same block if and only if $\chi_{\lambda}=\chi_{\mu}$. In the semisimple case $\mathcal{B}_{k}(\lambda) = \{\lambda\}$, and we proved in \emph{\Cref{SSPDistSimples}} that
\[ \lambda=\mu \iff \chi_{\lambda}(l_{n})=\chi_{\mu}(l_{n}) \hspace{2mm} \forall n\geq 0 \iff \chi_{\lambda}=\chi_{\mu}. \]
When $\mathcal{A}_{2k}(\delta)$ is non-semisimple, we know that $SS_{2k}[N_{1},\dots,N_{2k}] \subseteq Z(\mathcal{A}_{2k}(\delta))$. Let $\chi_{\lambda}|_{SS}$ denote the restriction of $\chi_{\lambda}$ to $SS_{2k}[N_{1},\dots,N_{2k}]$. It turns out that $\lambda$ and $\mu$ belong to the same block if and only if $\chi_{\lambda}|_{SS}=\chi_{\mu}|_{SS}$. To prove this we first need to show that $\chi_{\lambda}|_{SS}$ acts by evaluation by contents, just like the semisimple case.

Let $R=\mathbb{C}[x]$ with indeterminate $x$. We can define the partition algebras over $R$ by letting $x$ play the role of $\delta$. Denote these $R$-algebras by $\mathcal{A}_{2k}^{R}(x)$. The $R$-algebra $\mathcal{A}_{2k}^{R}(x)$ is cellular with cell modules $\Delta_{2k,R}^{(\lambda,l)}$ for each $(\lambda,l) \in \hat{\mathcal{A}}_{2k}$. In \cite[Section 3]{Eny13} it was shown that each cell module $\Delta_{2k,R}^{(\lambda,l)}$ has a Murphy type basis $\{m_{T} \hspace{1mm} | \hspace{1mm} T \in \mathsf{Path}(\lambda,l)\}$ on which the normalised Jucys-Murphy elements act upper-triangularly (see Proposition 3.15 of \cite{Eny13}). As an operator on $\Delta_{2k,R}^{(\lambda,l)}$, the eigenvalues of $N_{i}$ are the contents cont$^{R}(T,i)$ for all $T\in \mathsf{Path}(\lambda,l)$. Here cont$^{R}(T,i)$  is the same as \emph{\Cref{PathContDef}} except $\delta$ is replaced by $x$. Now let $\delta \in \mathbb{C}$ and $I(\delta)$ be the ideal of $R$ generated by the polynomial $x-\delta$. Set $\mathbb{C}(\delta) := R/I(\delta) \cong \mathbb{C}$ and let
\[ \mathcal{A}_{2k}^{\mathbb{C}(\delta)}(x) = \mathbb{C}(\delta)\otimes_{R}\mathcal{A}_{2k}^{R}(x), \hspace{5mm} \Delta_{2k,\mathbb{C}(\delta)}^{(\lambda,l)} = \mathbb{C}(\delta)\otimes_{R}\Delta_{2k,R}^{(\lambda,l)}. \]
As $\mathbb{C}$-algebras $\mathcal{A}_{2k}^{\mathbb{C}(\delta)}(x) \cong \mathcal{A}_{2k}(\delta)$ and the cell modules agree, that is $ \Delta_{2k,\mathbb{C}(\delta)}^{(\lambda,l)} =  \Delta_{2k}^{(\lambda,l)}$. The set $\{1\otimes m_{T} \hspace{1mm} | \hspace{1mm} T \in \mathsf{Path}(\lambda,l)\}$ is a Murphy type basis for $\Delta_{2k}^{(\lambda,l)}$. Also as operators on $\Delta_{2k}^{(\lambda,l)}$, the eigenvalues of $N_{i}$ are the images of cont$^{R}(T,i)$ under the projection $R\rightarrow R/I(\delta)$, as such they are precisely cont$(T,i)$ as given in \emph{\Cref{PathContDef}} for all $T \in \mathsf{Path}(\lambda,l)$. For each $(\lambda,l) \in \Lambda_{k}(\delta)$, there exist some submodule $M \subset \Delta_{2k}^{(\lambda,l)}$ such that $D_{2k}^{(\lambda,l)} \cong \Delta_{2k}^{(\lambda,l)}/M$. By Schur's Lemma End$_{\mathcal{A}_{2k}(\delta)}(D_{2k}^{(\lambda,l)}) \cong \mathbb{C}$, hence each element of the center acts on $D_{2k}^{(\lambda,l)}$ by a constant. Since each $p \in SS_{2k}[N_{1},\dots,N_{2k}]$ is central in $\mathcal{A}_{2k}(\delta)$, and since each $N_{i}$ acts upper-triangularly with eigenvalues given by contents, one can deduce that
\[ p(N_{1},\dots,N_{2k})(1\otimes m_{T} + M) = p(c_{\lambda}(1),\dots,c_{\lambda}(2k))(1\otimes m_{T} + M), \]
for all $T \in \mathsf{Path}(\lambda,l)$. As such $\chi_{\lambda}|_{SS}$ acts by evaluation by contents even in the non-semisimple case. To show that $\chi_{\lambda}|_{SS}=\chi_{\mu}|_{SS}$ if and only if $\lambda$ and $\mu$ belong to the same block, we first define a generating function to express the information of the character $\chi_{\lambda}|_{SS}$.  

\begin{defn} \label{SSBalDef}
Let $\delta \in \mathbb{C}$ and $(\lambda,l) \in \Lambda_{k}(\delta)$. Let $t$ be a formal variable, then we set 
\[ \lambda(t) := \frac{\prod_{i=0}^{k-l-1}(1+(i-\frac{\delta}{2})t)}{\prod_{a \in \lambda}(1+(\frac{\delta}{2}-c(a))t)}. \] 
For convention we set $\emptyset(t)=1$. 
\end{defn}

Recall from the previous section that $l_{n}(\lambda)$ denoted the evaluation of the $n$-th elementary supersymmetric polynomial with respect to the content vector $(c_{\lambda}(1),\dots,c_{\lambda}(2k))$, where $c_{\lambda}(i):=\text{cont}(T^{(\lambda,l)},i)$. From above we know that $\chi_{\lambda}(l_{n})=l_{n}(\lambda)$. Then $\lambda(t)$ in the above definition is the generating function whose $t^{n}$ coefficient is $\chi_{\lambda}(l_{n})$ (see \emph{\Cref{EleSSPDef}} and the discussion preceding (Eq3)). Since $l_{n}$ generates all of $SS_{2k}[x]$, the generating function $\lambda(t)$ is the same data as $\chi_{\lambda}|_{SS}$, in particular $\lambda(t)=\mu(t)$ if and only if $\chi_{\lambda}|_{SS}=\chi_{\mu}|_{SS}$.

\begin{ex} \label{SSBalEx}
Let $\delta = 1$ and consider the partitions $(\emptyset,3),(\PART{2},1),(\PART{2,1},0) \in \Lambda_{3}(\delta)$ from \emph{\Cref{DeltaPairEx}}. Then we have $\emptyset(t) = 1$ and
\begin{align*}
\PART{2}(t) &= \frac{(1+(0-\frac{1}{2})t)(1+(1-\frac{1}{2})t)}{(1+(\frac{1}{2}-0)t)(1+(\frac{1}{2}-1)t)} = 1, \\
\PART{2,1}(t) &= \frac{(1+(0-\frac{1}{2})t)(1+(1-\frac{1}{2})t)(1+(2-\frac{1}{2})t)}{(1+(\frac{1}{2}-0)t)(1+(\frac{1}{2}-1)t)(1+(\frac{1}{2}+1)t)} = 1.
\end{align*}
Hence $\emptyset(t)=\PART{2}(t)=\PART{2,1}(t)$.
\end{ex}

The above example tells us that, for any $(\lambda,l) \in \{(\emptyset,3),(\PART{2},1),(\PART{2,1},0)\}$, we have $l_{n}(\lambda)=0$ for all $n\geq1$, and as such all non-constant elements of $SS_{6}[N_{1},\dots,N_{6}]$ act on $D_{6}^{\lambda}$ by 0. \emph{\Cref{DeltaPairEx}} showed that the partitions in the above example belong to the same maximal $1$-chain, hence belong to the same block. We seek to show that two partitions $\lambda,\mu \in \Lambda_{k}(\delta)$ belong to the same block of $\mathcal{A}_{2k}(\delta)$ if and only if $\lambda(t)=\mu(t)$. This will tell us that $SS_{2k}[N_{1},\dots,N_{2k}]$ can distinguish between the blocks of $\mathcal{A}_{2k}(\delta)$. We will prove this by showing that $\lambda(t)=\mu(t)$ is equivalent to $\mu$ and $\lambda$ belonging to the same maximal $\delta$-chain.

\begin{lem} \label{DeltaPairAreSSBal}
Let $(\lambda,l),(\mu,m) \in \Lambda_{k}(\delta)$. If $(\mu,\lambda)$ is a $\delta$-pair, then $\mu(t) = \lambda(t)$.
\end{lem}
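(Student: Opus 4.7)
The plan is to compute the ratio $\lambda(t)/\mu(t)$ directly from the definition and show it equals $1$, by matching the factors in the numerator and denominator one-for-one using the content data imposed by the $\delta$-pair condition.

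Write $|\mu| = k-m$ and $|\lambda| = k-l$. Since $\mu \subset \lambda$ with $\lambda/\mu$ a horizontal strip, $m \geq l$ and $|\lambda/\mu| = m-l$. Splitting the products in \Cref{SSBalDef} along the inclusion $\mu \subset \lambda$ and cancelling yields
\[ \frac{\lambda(t)}{\mu(t)} \;=\; \frac{\prod_{i=k-m}^{k-l-1}\bigl(1+(i-\tfrac{\delta}{2})t\bigr)}{\prod_{a \in \lambda/\mu}\bigl(1+(\tfrac{\delta}{2}-c(a))t\bigr)}. \]
So it suffices to show the multiset of exponents $\{i-\tfrac{\delta}{2} : k-m \leq i \leq k-l-1\}$ in the numerator coincides with the multiset $\{\tfrac{\delta}{2}-c(a) : a \in \lambda/\mu\}$ appearing in the denominator.

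The $\delta$-pair condition tells us $\lambda/\mu$ is a horizontal strip whose right-most box has content $\delta - |\mu| = \delta - k + m$. Since the strip is horizontal (all boxes in a single row $r$, consecutive columns), its $m-l$ boxes have contents running consecutively by $1$ and ending at $\delta - k + m$, namely
\[ c(a) \in \{\delta - k + l + 1,\; \delta - k + l + 2,\; \dots,\; \delta - k + m\}, \]
each value occurring exactly once. Applying $\tfrac{\delta}{2} - (\cdot)$ gives
\[ \left\{\tfrac{\delta}{2}-c(a) : a \in \lambda/\mu\right\} \;=\; \left\{k-m-\tfrac{\delta}{2},\; k-m+1-\tfrac{\delta}{2},\; \dots,\; k-l-1-\tfrac{\delta}{2}\right\}, \]
which is exactly $\{i-\tfrac{\delta}{2} : k-m \leq i \leq k-l-1\}$. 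Hence the numerator and denominator of $\lambda(t)/\mu(t)$ are identical as polynomials in $t$, and $\lambda(t)=\mu(t)$.

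The argument is essentially a bookkeeping check, so there is no real obstacle; the only subtle point is confirming that the two index-shifts (the product range $[k-m,k-l-1]$ and the content range $[\delta-k+l+1,\delta-k+m]$) really are complementary under the involution $x \mapsto \tfrac{\delta}{2}-x$, which follows immediately from $|\lambda/\mu|=m-l$ together with the prescribed endpoint content $\delta - |\mu|$.
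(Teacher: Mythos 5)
Your proof is correct and follows essentially the same route as the paper's: split the products along $\mu \subset \lambda$, use the $\delta$-pair endpoint condition to determine the contents of the horizontal strip, and observe that the resulting factors cancel exactly against the extra numerator factors indexed by $k-m \leq i \leq k-l-1$. The only cosmetic difference is that you phrase the cancellation as an equality of multisets while the paper reindexes the two products to make them literally identical.
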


\begin{proof}
We have that $\lambda/\mu = R$ where $R$ is a horizontal strip. Let $R = \{b_{1},\dots,b_{n}\}$ where the boxes $b_{i}$ run from left to right as $i$ runs from 1 to $n$. Since $(\mu,\lambda)$ is a $\delta$-pair we have that $c(b_{n}) = \delta-|\mu|  =\delta-k+m$. As such,
\[ \lambda(t) = \frac{\prod_{i=0}^{k-l-1}(1+(i-\frac{\delta}{2})t)}{\prod_{a \in \lambda}(1+(\frac{\delta}{2}-c(a))t)} = \frac{\prod_{i=0}^{k-m-1}(1+(i-\frac{\delta}{2})t)\prod_{i=k-m}^{k-l-1}(1+(i-\frac{\delta}{2})t)}{\prod_{a \in \lambda\backslash R}(1+(\frac{\delta}{2}-c(a))t)\prod_{1\leq i \leq n}(1+(\frac{\delta}{2}-c(b_{i}))t)}. \]
We have that $|\lambda| = |\mu| + n$ and so $m=l+n$. Then reindexing gives
\[ \prod_{i=k-m}^{k-l-1}(1+(i-\delta/2)t) = \prod_{i=k-l-n}^{k-l-1}(1+(i-\delta/2)t) = \prod_{i=1}^{n}(1+(k-l-i-\delta/2)t). \tag{Eq8} \]
Since $R=\{b_{1},\dots,b_{n}\}$ consists of consecutive boxes in the same row, and $c(b_{n}) = \delta-k+m$, we see that
\begin{align*}
c(b_{i}) &= c(b_{n})-(n-i) \\
&= \delta-k+m-n+i \\
&= \delta-k+l+i
\end{align*}
where the last equality follows since $m-n=l$. This gives
\[ \prod_{1\leq i \leq n}(1+(\delta/2-c(b_{i}))t) = \prod_{1\leq i \leq n}(1+(\delta/2-(\delta-k+l+i))t) = \prod_{1\leq i \leq n}(1+(k-l-i-\delta/2)t). \tag{Eq9} \]
Thus (Eq8) and (Eq9) agree, and so these factors cancel in $\lambda(t)$. Hence, since $\lambda\backslash R=\mu$, we see that
\[ \lambda(t) = \frac{\prod_{i=0}^{k-m-1}(1+(i-\frac{\delta}{2})t)}{\prod_{a \in \lambda\backslash R}(1+(\frac{\delta}{2}-c(a))t)} = \mu(t). \]

\end{proof}

From this lemma, one can immediately see that if $\tau^{(0)} \subset \tau^{(1)} \subset \dots \subset \tau^{(y)}$ is a maximal $\delta$-chain, then $\tau^{(i)}(t)=\tau^{(j)}(t)$ for any $0\leq i,j\leq y$. This has given us the forward direction, that is
\[ \mu \in \mathcal{C}_{k}(\lambda) \implies \mu(t)=\lambda(t). \]
The other direction will following from the next two lemmas.

\begin{lem}
Let $(\lambda,l),(\mu,m) \in \Lambda_{k}(\delta)$ such that $\mu \subset \lambda$. If $\lambda(t)=\mu(t)$, then there exists a $\delta$-chain $\tau^{(x)} \subset \dots \subset \tau^{(y)}$ for some $x<y$ such that $\mu = \tau^{(x)}$ and $\lambda = \tau^{(y)}$. 
\end{lem}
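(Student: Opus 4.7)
The plan is to extract from the hypothesis $\lambda(t) = \mu(t)$ a precise description of the skew diagram $\lambda/\mu$ as a skew hook with a prescribed maximum content, and then peel it off row by row to build the required $\delta$-chain.

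First I would cross-multiply the rational identity $\lambda(t) = \mu(t)$ and cancel common factors. Since $\mu \subset \lambda$, we may split $\prod_{a \in \lambda} = \prod_{a \in \mu} \cdot \prod_{a \in \lambda/\mu}$ and cancel the $\mu$-product; similarly, since $|\mu| \leq |\lambda|$ we have $m \geq l$, so $\prod_{i=0}^{k-m-1}(1 + (i - \delta/2)t)$ is common to the numerators and cancels too. Assuming $\lambda \neq \mu$ (otherwise there is nothing to prove), this yields
\[ \prod_{i = k-m}^{k-l-1}\bigl(1 + (i - \tfrac{\delta}{2})t\bigr) = \prod_{a \in \lambda/\mu}\bigl(1 + (\tfrac{\delta}{2} - c(a))t\bigr) \quad \text{in } \mathbb{C}[t]. \]
Since the left-hand factors are indexed by $m-l = |\lambda/\mu|$ distinct consecutive integers, equating non-trivial linear factors as multisets forces
\[ \{c(a) : a \in \lambda/\mu\} = \{\delta - |\lambda| + 1,\, \delta - |\lambda| + 2,\, \ldots,\, \delta - |\mu|\} \]
as multisets with all multiplicities equal to $1$ (checking carefully the edge case when $\delta/2$ lies in the target set, where both products acquire a single constant factor).

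Next I would deduce that $\lambda/\mu$ is a skew hook. Distinct contents immediately rule out any $2\times 2$ sub-block in $\lambda/\mu$, since the two opposite corners of such a block would share a content; thus $\lambda/\mu$ is a ribbon. For connectedness, consecutiveness of the content interval together with the partition inequalities on $\mu$ and $\lambda$ prohibit two separated components, because a gap between rows that both meet $\lambda/\mu$ forces a gap of at least two units in the achievable contents via the chain of inequalities $\mu_i \leq \mu_{i-1}$ and $\lambda_i \leq \lambda_{i-1}$. By the paper's definition, $\lambda/\mu$ is then a skew hook occupying consecutive rows $r_1, r_1+1, \ldots, r_1 + p - 1$ for some $p \geq 1$, with exactly one box in each of the $p = m-l$ diagonals forming the content interval.

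Finally I would build the $\delta$-chain top-down. Let $R_j$ be the set of boxes of $\lambda/\mu$ in row $r_1 + j - 1$; each $R_j$ is a horizontal strip, and the skew hook structure gives $c^{\max}_{j+1} = c^{\min}_j - 1$, where $c^{\max}_j, c^{\min}_j$ denote the contents of the right- and leftmost boxes of $R_j$. Set $x = r_1 - 1$ and $\tau^{(x+j)} := \mu \cup R_1 \cup \cdots \cup R_j$ for $j = 0, 1, \ldots, p$. The inequality $\lambda_{r_1 + j - 1} \geq \lambda_{r_1 + j} \geq \mu_{r_1 + j}$ shows each $\tau^{(x+j)}$ is a partition, and $\tau^{(x+j)}/\tau^{(x+j-1)} = R_j$ lies in row $x + j$. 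For the $\delta$-pair condition, induction on $j$ starting from $c_1^{\max} = \delta - |\mu|$ (the maximum of the content interval) gives $c_j^{\max} = \delta - |\mu| - \sum_{i < j} |R_i| = \delta - |\tau^{(x+j-1)}|$, so each step is a $\delta$-pair. Setting $y = x + p > x$ completes the $\delta$-chain from $\mu$ to $\lambda$.

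The main obstacle is the skew-hook deduction in the middle step: transforming the polynomial identity into combinatorial information about $\lambda/\mu$ requires both the $2\times 2$-free (rim) property and the connectedness argument, the latter relying on how partition constraints interact with content intervals; once $\lambda/\mu$ is known to be a skew hook, the remaining verification is a direct bookkeeping calculation using the skew-hook geometry.
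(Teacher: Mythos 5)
Your proof is correct, and its engine is the same as the paper's: after cancelling the common factors in $\lambda(t)=\mu(t)$, match the remaining linear factors to conclude that the contents of $\lambda/\mu$ form a consecutive interval with multiplicity one whose maximum is $\delta-|\mu|$, so that $\lambda/\mu$ is a skew hook occupying consecutive rows. Where you differ is in how the $\delta$-chain is then assembled. The paper inducts on the number of nonempty row-strips: it isolates the bottom strip $R^{(y)}$, checks that $\lambda\backslash R^{(y)}\hookrightarrow_{\delta}\lambda$, invokes \Cref{DeltaPairAreSSBal} to transfer the hypothesis to $\mu(t)=(\lambda\backslash R^{(y)})(t)$, and applies the inductive hypothesis. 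You instead peel the skew hook from the top, defining $\tau^{(x+j)}=\mu\cup R_{1}\cup\dots\cup R_{j}$ and verifying each $\delta$-pair condition by direct content bookkeeping via $c_{j}^{\max}=\delta-|\tau^{(x+j-1)}|$; this makes the argument self-contained at the cost of redoing by hand the computation that \Cref{DeltaPairAreSSBal} encapsulates. You are in fact more careful than the paper on two points: the trivial-factor edge case when $\delta/2$ lies in the content interval, and the geometric deduction (no $2\times2$ block plus connectedness) that the content condition forces consecutive rows, which the paper asserts with less justification. Note also that under \Cref{SkewHookDef} the term ``skew hook'' is literally the statement that $MC(\lambda/\mu)$ is an interval of multiplicity one, so that identification is immediate once the factor matching is done; what your connectedness argument genuinely supplies is the row-by-row geometric picture needed for the peeling construction.
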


\begin{proof}
Since $\mu \subset \lambda$, we may write $\lambda = \mu \cup_{i}R^{(i)}$ where $R^{(i)} := \{r_{1}^{(i)},r_{2}^{(i)},\dots,r_{n_{i}}^{(i)}\}$ is a horizontal strip of boxes of $\lambda$ in the $i$-th row, for each $1 \leq i \leq h(\lambda)+1$ and $n_{i} \in \mathbb{Z}_{\geq 0}$. Note some $R^{(i)}$ may be empty. We will prove the result by induction on the number of such non-empty horizontal strips $R^{(i)}$. For the base case, assume that $\mu$ and $\lambda$ differ by a single row, that is $\lambda\backslash \mu = R := \{r_{1},\dots,r_{n}\}$ for some $n \in \mathbb{N}$. Since $\mu\cup R = \lambda$, we have
\[ \lambda(t) = \frac{\prod_{i=0}^{k-l-1}(1+(i-\frac{\delta}{2})t)}{\prod_{a \in \lambda}(1+(\frac{\delta}{2}-c(a))t)} = \frac{\prod_{i=0}^{k-l-1}(1+(i-\frac{\delta}{2})t)}{\prod_{a \in \mu}(1+(\frac{\delta}{2}-c(a))t)\prod_{1 \leq i \leq n}(1+(\frac{\delta}{2}-c(r_{i}))t)}, \] 
and by definition we have
\[ \mu(t) = \frac{\prod_{i=0}^{k-m-1}(1+(i-\frac{\delta}{2})t)}{\prod_{a \in \mu}(1+(\frac{\delta}{2}-c(a))t)}. \]
By assumption we have that $\lambda(t)=\mu(t)$, and so we can deduce that
\[ \frac{\prod_{i=k-m}^{k-l-1}(1+(i-\frac{\delta}{2})t)}{\prod_{1 \leq i \leq n}(1+(\frac{\delta}{2}-c(r_{i}))t)} = 1. \]
Note that there is $n=m-l$ irreducible factors in the numerator and denominator of above. Since the fraction equals 1, the factors in the numerator must match up one-to-one with the factors in the denominator. The box $r_{n}$ has the largest content of all the boxes in $R$, and so the factor $(1+(\frac{\delta}{2}-c(r_{n}))t)$ has the smallest coefficient of $t$ out of all the factors in the denominator. As such, this factor must cancel out the the factor $(1+(k-m-\frac{\delta}{2})t)$ in the numerator, since this has the smallest coefficient of $t$ among the factors in the numerator. Equating these coefficients yields
\[ c(r_{n}) = \delta-k+m = \delta - |\mu|. \]
Hence $\mu \hookrightarrow_{\delta} \lambda$ proving the base case. Now assume the result holds if $\lambda$ differs from $\mu$ by $s>1$ strips. We seek to prove the $s+1$ case. So suppose that $\mu \cup_{i \in I}R^{(i)} = \lambda$ where $R^{(i)} = \{r_{1}^{(i)},\dots,r_{n_{i}}^{(i)}\}$ is a non-empty strip of boxes in the $i$-th row of $\lambda$ and $I \subset \{1,2,\dots,h(\lambda)+1\}$ with $|I|=s+1$. We have that
\[ \lambda(t) =  \frac{\prod_{i=0}^{k-l-1}(1+(i-\frac{\delta}{2})t)}{\prod_{a \in \mu}(1+(\frac{\delta}{2}-c(a))t)\prod_{i \in I}\prod_{1 \leq j \leq n_{i}}(1+(\frac{\delta}{2}-c(r_{j}^{(i)}))t)}.\] 
By assumption we have that $\lambda(t)=\mu(t)$, and so one can deduce that
\[ \frac{\prod_{i=k-m}^{k-l-1}(1+(i-\frac{\delta}{2})t)}{\prod_{i \in I}\prod_{1 \leq j \leq n_{i}}(1+(\frac{\delta}{2}-c(r_{j}^{(i)}))t)} = 1. \]
Note, as was the case previously, the number of irreducible factors in the numerator agrees with that of the denominator. Also the coefficients of $t$ in the irreducible factors in the numerator are all distinct, and differ to their neighbours by $\pm 1$. Since the fraction equals 1, and the number of factors on top and bottom agree, the same must be true for the coefficients of $t$ in the denominator.
This tells us that the skew diagram $\lambda\backslash \mu = \cup_{i}R^{(i)}$ is a skew hook, and in particular the strips $R^{(i)}$ must occur among consecutive rows, that is $I = \{x,x+1,\dots,y\}$ for some $1\leq x<y\leq h(\lambda)+1$. The strip $R^{(y)}$ is within the lowest row of $[\lambda]$ among the rows indexed by $I$, and so the box $r_{1}^{(y)}$ has the smallest content among the boxes $r \in \cup_{i}R^{(i)}$. As such the factor $(1+(\delta/2 - c(r_{1}^{(y)})t)$ has the largest coefficient of $t$ among the irreducible factors in the denominator, hence this factor must cancel out with $(1+(k-l-1-\frac{\delta}{2})t)$, since this is the irreducible factor with the largest coefficient of $t$ in the numerator. Therefore we must have that
\[ c(r_{1}^{(y)}) = \delta - (k-l-1) = \delta-|\lambda|+1. \]
The last box $r_{n_{y}}^{(y)}$ in $R^{(y)}$ has content $c(r_{n_{y}}^{(y)}) = c(r_{1}^{(y)})+n_{y}-1$, and so we have that
\[ c(r_{n_{y}}^{(y)}) = \delta-|\lambda|+1 +n_{y}-1 = \delta -|\lambda|+n_{y} = \delta - |\lambda\backslash R^{(y)}|. \]
Therefore $\lambda\backslash R^{(y)} \hookrightarrow_{\delta} \lambda$, and thus by \emph{\Cref{DeltaPairAreSSBal}} we have that $\lambda(t) = (\lambda\backslash R^{(y)})(t)$. This means also that $\mu(t) = (\lambda\backslash R^{(y)})(t)$ with $\mu \subset \lambda\backslash R^{(y)}$, and so by the inductive hypothesis there exists a $\delta$-chain $\tau^{(x)} \subset \dots \subset \tau^{(y-1)}$ with $1\leq x<y-1\leq h(\lambda\backslash R^{(y)})+1$ such that $\tau^{(x)} = \mu$ and $\tau^{(y-1)}=\lambda\backslash R^{(y)}$. Since  $\lambda\backslash R^{(y)} \hookrightarrow_{\delta} \lambda$, we can extend this chain by adding $\tau^{(y)} = \lambda$, which completes the proof.

\end{proof}

\begin{lem}
Let $(\lambda,l),(\mu,m) \in \Lambda_{k}(\delta)$. If $\lambda(t)=\mu(t)$, then either $\mu \subset \lambda$ or $\lambda \subset \mu$.
\end{lem}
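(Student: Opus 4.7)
The plan is to apply unique factorization in $\mathbb{C}[t]$ after stripping the shared structure from $\lambda(t)=\mu(t)$. Without loss of generality assume $|\lambda|\geq|\mu|$, equivalently $l\leq m$, and set $\nu := \lambda\cap\mu$, the partition with Young diagram $[\lambda]\cap[\mu]$; it suffices to show $\mu\setminus\nu = \emptyset$, i.e.\ $\mu\subset\lambda$. I would begin by clearing denominators in $\lambda(t)=\mu(t)$, then cancelling on both sides the numerator factors indexed by $0\leq i\leq k-m-1$ and the denominator factors coming from boxes of $\nu$. What remains is the polynomial identity
\[
\prod_{i=k-m}^{k-l-1}\!\left(1+(i-\tfrac{\delta}{2})t\right)\cdot\!\!\prod_{b\in\mu\setminus\nu}\!\!\left(1+(\tfrac{\delta}{2}-c(b))t\right) \;=\; \prod_{a\in\lambda\setminus\nu}\!\!\left(1+(\tfrac{\delta}{2}-c(a))t\right). \qquad (\star)
\]

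Next I would establish a combinatorial sub-claim: no integer $c$ is realized as the content of both a box of $\lambda\setminus\nu$ and a box of $\mu\setminus\nu$. Since $\nu_i=\min(\lambda_i,\mu_i)$, in any given row at most one of the two skew diagrams contributes, so a shared content would have to come from different rows. Suppose boxes $(i,c+i)\in\lambda\setminus\nu$ and $(j,c+j)\in\mu\setminus\nu$ share content $c$ with $i\neq j$, and take $i<j$ (the case $j<i$ is symmetric). The containment conditions give $c+i\geq\mu_i+1$ and $c+j\leq\mu_j$, while the partition inequality forces $\mu_j\leq\mu_i$; chaining these yields $c+j\leq\mu_j\leq\mu_i\leq c+i-1<c+j$, a contradiction.

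Now suppose for contradiction that $\mu\setminus\nu\neq\emptyset$. If some $b\in\mu\setminus\nu$ has $c(b)\neq\delta/2$, then $(1+(\tfrac{\delta}{2}-c(b))t)$ is a non-constant irreducible factor of the left-hand side of $(\star)$, and unique factorization in $\mathbb{C}[t]$ forces it to divide the right-hand side, producing $a\in\lambda\setminus\nu$ with $c(a)=c(b)$ and contradicting the sub-claim. Otherwise every $b\in\mu\setminus\nu$ satisfies $c(b)=\delta/2$, so the second product on the left-hand side of $(\star)$ is $1$; the sub-claim then implies that no box of $\lambda\setminus\nu$ has content $\delta/2$, so the right-hand side has degree $|\lambda\setminus\nu|=|\mu\setminus\nu|+(m-l)\geq 1+(m-l)>m-l$, while the left-hand side has degree at most $m-l$, again a contradiction. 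Hence $\mu\setminus\nu=\emptyset$, i.e.\ $\mu\subset\lambda$.

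The main obstacle is the content-disjointness sub-claim, since it is the only geometric input linking $(\star)$ back to the Young diagrams of $\lambda$ and $\mu$; once that is in hand, the remainder is a clean application of unique factorization and a degree count, with the only subtle point being the separate treatment of boxes whose content is exactly $\delta/2$, since the associated linear factor in $(\star)$ is then trivial.
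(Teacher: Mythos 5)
Your proof is correct and follows essentially the same route as the paper's: cross-multiply and cancel the factors common to $\lambda\cap\mu$, observe that $\lambda\setminus\nu$ and $\mu\setminus\nu$ cannot contain boxes of equal content, and then dispose of the boxes of content $\delta/2$ by a factor-counting (degree) argument. If anything, your explicit verification of the content-disjointness sub-claim supplies the justification for a step the paper only asserts (namely, that disjointness of the skew diagrams $\lambda/\tau$ and $\mu/\tau$ forces their content multisets to be disjoint).
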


\begin{proof}
Assume for contradiction that $\lambda(t)=\mu(t)$ but $\lambda \not\subset \mu$ and $\mu \not\subset \lambda$. Let $\tau = \lambda\cap\mu$. We have
\begin{align*}
\lambda(t) = \frac{\prod_{i=0}^{k-l-1}(1+(i-\frac{\delta}{2})t)}{\prod_{a \in \lambda}(1+(\frac{\delta}{2}-c(a))t)} = \frac{\prod_{i=0}^{k-l-1}(1+(i-\frac{\delta}{2})t)}{\prod_{a \in \tau}(1+(\frac{\delta}{2}-c(a))t)\prod_{a \in \lambda/ \tau}(1+(\frac{\delta}{2}-c(a))t)},
\end{align*}
and similarly
\begin{align*}
\mu(t) = \frac{\prod_{i=0}^{k-m-1}(1+(i-\frac{\delta}{2})t)}{\prod_{a \in \mu}(1+(\frac{\delta}{2}-c(a))t)} = \frac{\prod_{i=0}^{k-m-1}(1+(i-\frac{\delta}{2})t)}{\prod_{a \in \tau}(1+(\frac{\delta}{2}-c(a))t)\prod_{a \in \mu/ \tau}(1+(\frac{\delta}{2}-c(a))t)}.
\end{align*}
Without loss of generality assume that $|\mu| \leq |\lambda|$, and so $m \geq l$. Then since $\lambda(t)=\mu(t)$, we have that
\[ \frac{\prod_{i=k-m}^{k-l-1}(1+(i-\frac{\delta}{2})t)}{\prod_{a \in \lambda/ \tau}(1+(\frac{\delta}{2}-c(a))t)} = \frac{1}{\prod_{a \in \mu/ \tau}(1+(\frac{\delta}{2}-c(a))t)}, \tag{Eq10} \]
which implies
\[ \prod_{i=k-m}^{k-l-1}(1+(i-\delta/2)t) = \frac{\prod_{a \in \lambda/ \tau}(1+(\frac{\delta}{2}-c(a))t)}{\prod_{a \in \mu/ \tau}(1+(\frac{\delta}{2}-c(a))t)}. \tag{Eq11} \]
This tells us that all the irreducible factors in the denominator of the right hand side of (Eq11) must cancel out with factors in the numerator, or be trivial. This means that the multiset of contents which do not equal $\delta/2$ of the skew diagram $\mu/ \tau$ is contained in the multiset of contents which do not equal $\delta/2$ of the skew diagram $\lambda/ \tau$. Since $(\lambda/ \tau) \cap (\mu/ \tau) = \emptyset$, these multisets are distinct, and so $\mu\backslash \tau$ must consists only of boxes with content $\delta/2$ and $\lambda/\tau$ has no boxes with content $\delta/2$. The only way $\mu/ \tau$ can consist solely of boxes with content $\delta/2$ is if $|\mu/ \tau| = 1$. So let $\mu/ \tau = \{a\}$ where $c(a)=\delta/2$. Hence, from (Eq10) above,
\[ \frac{\prod_{i=k-m}^{k-l-1}(1+(i-\frac{\delta}{2})t)}{\prod_{a \in \lambda/ \tau}(1+(\frac{\delta}{2}-c(a))t)} = 1. \tag{Eq12} \]
Note, the number of irreducible factors in the numerator of (Eq12) equals $m-l=|\lambda| - |\mu|$, while there is $|\lambda/ \tau| = |\lambda|-|\mu|+1$ (since $|\mu|=|\tau|+1$) irreducible factors in the denominator. Thus for the equality of (Eq12) to hold, one of the factors in the denominator must equal 1, i.e. there must exist some $a \in \lambda/ \tau$ such that $c(a)=\delta/2$. However, as mentioned above, this cannot occur, giving the desired contradiction.

\end{proof}

\begin{cor} \label{AltBlockCri}
The partitions $\lambda$ and $\mu$ are within the same block of $\Lambda_{k}(\delta)$ if and only if $\lambda(t)=\mu(t)$.
\end{cor}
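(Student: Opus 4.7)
The plan is to combine Martin's combinatorial description of the blocks (\emph{\Cref{MartinBlocks}}) with the three lemmas immediately preceding the corollary. Martin's theorem identifies $\mathcal{B}_k(\lambda)$ with $\mathcal{C}_k(\lambda)$, the maximal $\delta$-chain containing $\lambda$, so it is enough to show that $\lambda(t) = \mu(t)$ is equivalent to $\lambda$ and $\mu$ lying in a common $\delta$-chain. Since each of the three preceding lemmas contributes one half of this equivalence, the corollary essentially just splices them together, and I do not expect any genuine obstacle beyond book-keeping.

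For the forward direction, suppose $\lambda$ and $\mu$ lie in the same block, hence by \emph{\Cref{MartinBlocks}} in the same maximal $\delta$-chain $\tau^{(0)} \subset \tau^{(1)} \subset \dots \subset \tau^{(y)}$ with $\lambda,\mu \in \{\tau^{(0)},\dots,\tau^{(y)}\}$. By \emph{\Cref{DeltaPairAreSSBal}}, each consecutive pair of partitions in this chain satisfies $\tau^{(i)}(t) = \tau^{(i+1)}(t)$. Chaining these equalities gives $\tau^{(0)}(t) = \tau^{(1)}(t) = \dots = \tau^{(y)}(t)$, and in particular $\lambda(t) = \mu(t)$.

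For the reverse direction, assume $\lambda(t) = \mu(t)$. The final lemma of the section forces $\mu \subset \lambda$ or $\lambda \subset \mu$, so after possibly swapping the roles of $\lambda$ and $\mu$ I may apply the penultimate lemma to produce a $\delta$-chain $\tau^{(x)} \subset \dots \subset \tau^{(y)}$ with $\tau^{(x)} = \mu$ and $\tau^{(y)} = \lambda$. Extending this to a maximal $\delta$-chain places $\lambda$ and $\mu$ in the same $\mathcal{C}_k$-equivalence class, and a final appeal to \emph{\Cref{MartinBlocks}} converts this into $\lambda \in \mathcal{B}_k(\mu)$. The one minor point to be checked is that the lemmas were already set up for $(\lambda,l),(\mu,m) \in \Lambda_k(\delta)$ rather than arbitrary vertices of $\hat{\mathcal{A}}_{2k}$, so the $\delta = 0$ edge case (where $\emptyset$ is excluded from the indexing set) creates no friction. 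Since the genuinely delicate reconstruction of a $\delta$-chain from the generating-function equality has already been carried out in the two lemmas above, the corollary itself needs no further calculation.
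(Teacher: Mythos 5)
Your proposal is correct and follows exactly the route the paper intends: the forward direction chains \emph{\Cref{DeltaPairAreSSBal}} along the maximal $\delta$-chain provided by \emph{\Cref{MartinBlocks}}, and the reverse direction combines the last two lemmas to reconstruct a $\delta$-chain from $\lambda(t)=\mu(t)$ before appealing to \emph{\Cref{MartinBlocks}} again. No issues.
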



\vspace*{\fill}
\noindent
School of Mathematics, Computer Science and Engineering, City, University of London \\
\textit{Email address}: Samuel.Creedon@city.ac.uk


\end{document}